\documentclass[11pt, reqno]{amsart}
\usepackage{amsmath, amssymb, amsthm, amsfonts, bm}
\usepackage{enumitem}
\usepackage[margin=1.3in]{geometry}
\usepackage[
colorlinks=true, urlcolor=blue, linkcolor=blue, citecolor=magenta]{hyperref}
\usepackage{amsrefs}
\allowdisplaybreaks
\usepackage{graphicx}
\usepackage{comment}

\newtheorem{theorem}{Theorem}[section]
\newtheorem{proposition}[theorem]{Proposition}

\newtheorem{lemma}[theorem]{Lemma}
\newtheorem{corollary}[theorem]{Corollary}

\theoremstyle{remark}
\newtheorem{remark}{Remark}[section]
\theoremstyle{definition}

\numberwithin{equation}{section}

\DeclareMathOperator{\tr}{tr}

\DeclareMathOperator{\diam}{diam}
\DeclareMathOperator{\dist}{dist}
\DeclareMathOperator{\Int}{Int}

\newcommand{\Id}{\operatorname{Id}}
\newcommand{\Sph}{\mathbb S}

\newcommand{\B}{\mathbb B}
\newcommand{\R}{\mathbb R}
\newcommand{\HH}{\mathbb H}
\newcommand{\PiH}{\Pi}
\newcommand{\cL}{\mathcal L}
\newcommand{\cE}{\mathcal E}

\newcommand{\dd}{\,d}
\newcommand{\bd}{\partial}
\newcommand{\ol}{\overline}

\newcommand{\ip}[2]{\left\langle #1,#2\right\rangle}

\begin{document}

\title{Free boundary flows by powers of the Gauss curvature in the unit ball}

\author[T. Luo]{Tianci Luo}
\author[Y. Wei]{Yong Wei}
\author[R. Zhou]{Rong Zhou}
\address{School of Mathematical Sciences, University of Science and Technology of China, Hefei 230026, P.R. China}
\email{Luo\_tianci@mail.ustc.edu.cn}
\email{yongwei@ustc.edu.cn}
\email{zhourong@mail.ustc.edu.cn}

\date{\today}
\subjclass[2020]{53E10, 53C42, 35K55}
\keywords{Gauss curvature flow, free boundary, convex hypersurfaces, entropy, almost-monotonicity}

\begin{abstract}
We study smooth compact strictly convex hypersurfaces in the unit ball that meet the support sphere orthogonally and evolve by the $\alpha$-Gauss curvature flow $\partial_tX=-K^\alpha\nu$, $\alpha>0$.  We prove that the solution remains strictly convex,  becomes extinct in finite time and contracts to a single point on the support sphere. If $\alpha >\frac{1}{n+2}$, we apply a Cayley-type conformal map that sends the extinction point to the origin of a Euclidean
half-space and then normalize the enclosed half-space volume. The resulting normalized hypersurfaces converge smoothly to the unit hemisphere.  The proof combines boundary identities for the spherical free boundary, a boundary-adapted Tso estimate, an almost-monotonicity
formula for a half-space entropy, and uniform curvature estimates for the normalized flow.
\end{abstract}

\maketitle
\tableofcontents

\section{Introduction}
\label{sec:intro}

The Gauss curvature flow is a fundamental example of a fully nonlinear
curvature flow for convex hypersurfaces. In Euclidean space, the family
\begin{equation}\label{eq:closed-alpha-intro}
    \partial_tX=-K^\alpha\nu,
    \qquad
    \alpha>0,
\end{equation}
contains the classical Gauss curvature flow introduced by Firey
\cite{Firey1974} as a model for the abrasion of convex bodies. For
smooth, closed, uniformly convex hypersurfaces in
$\mathbb R^{n+1}$, Tso \cite{Tso1985} proved that the flow with
$\alpha=1$ remains uniformly convex, becomes extinct in finite time,
and contracts to a point. Chow \cite{chow85} treated the flow by the
$n$th root of the Gauss curvature. Since then, flows by powers of the
Gauss curvature have become a central class of fully nonlinear
curvature flows.

The asymptotic behavior near the extinction time has been studied extensively.  Andrews \cite{And99} proved that every smooth, closed, uniformly convex surface in $\mathbb{R}^3$ contracts to a round point under the Gauss curvature flow, thereby resolving Firey's conjecture.  The case $\alpha={1}/{(n+2)}$ is the affine normal flow.  For this flow, Andrews \cite{And96} proved that, after reparametrization and suitable rescaling about the final point, the solution converges to a normalized ellipsoid.  In the superaffine range $\alpha>1/(n+2)$, entropy methods give convergence of the volume-normalized closed flow to a smooth uniformly convex self-similar solution.  This was established by Andrews \cite{And00} for $\alpha\in (\frac{1}{n+2}, \frac{1}{n}]$, by Guan--Ni \cite{GN17} for $\alpha=1$, and by Andrews--Guan--Ni \cite{AGN2016} for all $\alpha>1/(n+2)$.  Combined with the classification theorem of Brendle--Choi--Daskalopoulos \cite{BCD2017} (see also earlier classification by Choi--Daskalopoulos \cite{CD16} for $\alpha\in [\frac{1}{n},1+\frac{1}{n})$), these results imply convergence to the round sphere.  The two-dimensional case for powers $\alpha\in [1/2,1]$ was treated in \cite{AC12}.  We refer the reader to \cite[Chapters 15--17]{Andbook} for a comprehensive exposition of smooth closed solutions to the $\alpha$-Gauss curvature flow.

Gauss curvature flows have also been investigated in curved ambient spaces and in the complete noncompact setting.  McCoy \cite{McC18} studied the Gauss curvature flow for convex surfaces in the three-sphere.  The $\alpha$-Gauss curvature flow for surfaces in three-dimensional space forms was studied by Hu, Li, Wei, and Zhou \cite{HLWZ20}.  More recently, Chen--Huang \cite{CH2024} considered the $\alpha$-Gauss curvature flow \eqref{eq:closed-alpha-intro} in higher-dimensional space forms.  By projecting the evolving hypersurface onto a Euclidean tangent space, they proved finite-time contraction for every $\alpha>0$ and spherical asymptotics when $\alpha>1/(n+2)$.  In the complete noncompact Euclidean setting, K. Choi, Daskalopoulos, Kim, and Lee \cite{CDKL19} proved global existence for complete, noncompact, locally uniformly convex hypersurfaces for every $\alpha>0$.  The asymptotic behavior for $\alpha>{1}/{2}$ was subsequently studied by B. Choi, K. Choi, and Daskalopoulos \cite{CCD22}.  They also classified ancient solutions to the Gauss curvature flow that are contained in a cylinder with bounded cross-section \cite{CCD24}.

By contrast, although free-boundary mean curvature type flows
have been studied extensively, fully nonlinear free-boundary
flows driven by powers of the Gauss curvature remain much less
understood. Recently, Mei, Wang and Weng \cite{MWW25} studied the
capillary Gauss curvature flow in the Euclidean half-space for the
power $\alpha=1$. They proved that the normalized flow converges to a
soliton, thereby extending the entropy method of Guan--Ni
\cite{GN17} to the capillary setting.

The free-boundary problem in the unit ball presents a different
difficulty. The support hypersurface is curved, and flattening it by a
conformal transformation changes the curvature speed. More precisely,
the transformed speed contains a curvature-dependent perturbation
which is not available in the closed Euclidean problem or in a flow
whose support hypersurface is already flat. One of the main purposes
of this paper is to control this perturbation without assuming
curvature estimates in advance. Our result proves the hemispherical asymptotics for the normalized free-boundary flow in the same natural
superaffine range  $\alpha>\frac{1}{n+2}$ as in the closed
Euclidean setting.

\subsection{Main result} 

Let $M^n$ be a compact, smooth, orientable manifold
with smooth nonempty boundary, and let $X_0:M\rightarrow \ol\B^{n+1}$  
be a smooth proper embedding in the unit ball in the sense that $ X_0(\operatorname{int}M)\subset\B^{n+1}$ and $X_0(\partial M)\subset\Sph^n$. Set
\begin{equation*}
    M_0=X_0(M),
    \qquad
    \partial M_0=X_0(\partial M)\subset\Sph^n.
\end{equation*}
Assume that $M_0$ is strictly convex and meets the support sphere
$\Sph^n=\partial\B^{n+1}$ orthogonally. We consider a smooth one-parameter family of proper embeddings
\begin{equation*}
    X(\cdot,t):M\longrightarrow\ol\B^{n+1},
    \qquad
    M_t=X(M,t),
\end{equation*}
with $X(\cdot,0)=X_0$. For each $t$ we assume that $M_t$, together with a
relatively open region of the support sphere, bounds a convex body
$\Omega_t\subset\ol\B^{n+1}$. We choose $\nu$ to be the outward unit
normal of $\Omega_t$ along $M_t$. Along the free boundary, the outward
unit normal of the support sphere is $ \mu=X$.  
The orthogonality condition implies that $\mu$ is tangent to $M_t$ and
is the outward unit conormal of $\partial M_t$ in $M_t$.  For a fixed power $\alpha>0$, we study the initial-boundary value
problem
\begin{equation}\label{eq:flow}
\begin{cases}
    \partial_tX=-K^\alpha\nu,
        & \text{in }M\times[0,T),\\
    X(\cdot,0)=X_0,
        & \\
    X(\partial M,t)\subset\Sph^n,
        & \\
    \langle\nu,\mu\rangle=0,
        & \text{on }\partial M\times[0,T),
\end{cases}
\end{equation}
where $K$ denotes the Gauss curvature of $M_t$.

Short-time existence and uniqueness for \eqref{eq:flow} follow
from the standard reduction to a scalar parabolic
initial-boundary value problem. More precisely, we can use the
generalized Gaussian coordinates introduced by Stahl
\cite[Section~2]{StahlRegularity}, which transform the geometric
free-boundary condition into a transversal homogeneous Neumann
condition. Although Stahl considered the mean curvature flow,
the coordinate construction and the boundary operator are
independent of the particular curvature speed. For the speed $G=K^\alpha$, the linearization with respect to the second fundamental form is $\dot G^{ij}=\alpha K^\alpha b^{ij}$.  Hence strict convexity of the initial hypersurface makes the
linearized equation uniformly parabolic. The
implicit function theorem argument in
\cite[Theorem~2.1]{StahlRegularity}, with the mean curvature
speed replaced by $K^\alpha$, therefore gives a unique solution
on a short time interval. The solution is smooth for positive
time and remains strictly convex for sufficiently short time.

Our main theorem is the following.

\begin{theorem}\label{thmA}
Let $n\geq2$ and $\alpha>0$. Let $M_0\subset\ol\B^{n+1}$ be a
smooth, strictly convex hypersurface with free boundary on
$\Sph^n$. Then the maximal smooth solution $M_t$ of
\eqref{eq:flow} exists on a finite time interval $[0,T)$.
The hypersurfaces $M_t$ remain strictly convex and contract in
Hausdorff distance to a single point 
$p\in\Sph^n$ as $t\to T$.

If in addition $\alpha>\frac{1}{n+2}$,  then after rotating $p$ to $-e_{n+1}$, applying the Cayley-type
conformal map to the Euclidean half-space, and rescaling so that the
enclosed half-space volume is equal to that of the unit half-ball,
the normalized hypersurfaces converge smoothly to the unit hemisphere
centered at the origin. Equivalently, the corresponding normalized
support functions converge smoothly to the constant function $1$ on
$\Sph^n_+$.
\end{theorem}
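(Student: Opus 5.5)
The proof splits into two parts: first the contraction to a boundary point, valid for every $\alpha>0$, and then the hemispherical asymptotics for $\alpha>\frac1{n+2}$.

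\emph{Part 1: finite-time contraction to a point on $\Sph^n$.}

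The first task is to establish the boundary identities forced by the orthogonality condition. Since $\mu=X$ is the outward conormal and the unit sphere is umbilic with all principal curvatures equal to $1$, differentiating $\langle\nu,X\rangle=0$ along $\partial M_t$ gives $h(\mu,e)=0$ for every tangential direction $e$ of $\partial M_t$. Hence $\mu$ is a principal direction. Differentiating the boundary condition in time then yields a Neumann identity of the form $\nabla_\mu K^\alpha=c\,K^\alpha$ with an explicit sign (expected $c\ge0$ or $=0$, depending on orientation), and an analogous identity for $\nabla_\mu h_{ij}$.

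With these in hand, I apply the maximum principle to the evolution of the inverse second fundamental form $b^{ij}$, or equivalently to the Gauss-map support function in the spirit of Tso. At an interior maximum the closed-surface computation applies. At a boundary maximum, the boundary identities give the Hopf-lemma sign, so a boundary maximum is excluded. This shows that strict convexity is preserved.

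Finite extinction time comes from volume comparison. The enclosed volume satisfies
\[
\frac{d}{dt}|\Omega_t| = -\int_{M_t} K^\alpha \le -c<0,
\]
by H\"older's inequality together with $\int K\ge c$. The latter bound holds because the Gauss image of $M_t$ contains a fixed hemisphere-type region: orthogonality implies that the normal image covers a region whose measure is bounded below for convex free-boundary caps. It follows that $T<\infty$.

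The key estimate is a boundary-adapted Tso bound. I work with the quantity $K^\alpha/(u-\rho)$, where $u$ is a support function measured from a point chosen near the free boundary, using the Minkowski-type identity on $\Sph^n$ so that the boundary term has the correct sign. This gives an upper bound on $K$ as long as $\Omega_t$ contains a ball. Combined with Krylov--Safonov and Schauder estimates, this shows the flow continues as long as the inradius stays positive. Consequently the inradius must tend to zero at time $T$. Convexity together with the bound on the width/inradius ratio (an Andrews-type pinching of width for $K^\alpha$ flows) then shows that the diameter also tends to zero. Because the region is attached to $\Sph^n$, the limit point lies on $\Sph^n$.

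\emph{Part 2: asymptotics for $\alpha>\frac1{n+2}$.}

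I first rotate so that $p=-e_{n+1}$ and apply the Cayley-type conformal map $\Phi$ sending $\B^{n+1}$ to the half-space and $p$ to $0$. Under $\Phi$, the flow becomes $\partial_t\tilde X=-\varphi^{-1}\tilde K^\alpha(1+E)\tilde\nu$. Here $\varphi$ is the conformal factor and the error $E$ depends on the curvature and on $\langle \tilde X,\tilde\nu\rangle$. As the hypersurfaces shrink to $0$, $\varphi\to$ const while $|E|\lesssim |\tilde X|\,\tilde K^{-1/n}$-type terms. Rescaling to fixed half-volume gives a normalized flow on support functions over $\Sph^n_+$ with Neumann condition on the equator, perturbed by terms that decay in normalized time.

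Next I define the half-space entropy
\[
\mathcal E(\Omega)=\sup_{z}\frac{1}{|\Sph^n_+|}\int_{\Sph^n_+}\log(u-z\cdot x)\quad\text{or the $\alpha$-power version of Andrews--Guan--Ni,}
\]
with $z$ ranging over points of the flat boundary. I show that $\mathcal E$ is \emph{almost} monotone: its derivative is nonpositive up to an error that is integrable in normalized time. The reflection across the flat boundary makes the Guan--Ni/AGN computation go through, since the Neumann condition kills the boundary terms. Almost-monotonicity yields bounds on the entropy, which in turn give bounds on the inradius and outer radius of the normalized flow. Tso's estimate and an upper bound on the principal radii (as in AGN) then give uniform $C^2$ bounds, and Krylov--Safonov/Schauder give higher regularity.

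Finally, subsequential limits are Neumann self-similar solutions $u^{?}\propto K^\alpha$ on $\Sph^n_+$. Reflecting such a limit gives a closed self-similar solution, which is a round sphere by Brendle--Choi--Daskalopoulos. Since the entropy is almost monotone, its limit along the flow is well defined, and together with the normalization this forces the limit to be the hemisphere centered at $0$. Uniqueness of the limit then gives full convergence.

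\emph{Main obstacle.} The hardest step is controlling the conformal perturbation $E$ in the almost-monotonicity formula \emph{without} prior curvature bounds. I would bound the error terms by the entropy and the volume alone, using $\int \tilde K^{\alpha}$ and $\int u$ and Hölder. I expect this to require exploiting $\alpha>\frac1{n+2}$ through the Blaschke–Santaló type inequality in AGN, so that the error decays like $e^{-c\tau}$ in normalized time $\tau$.
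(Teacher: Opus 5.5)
Your overall architecture matches the paper's: boundary identities, a curvature lower bound, a boundary Tso estimate, the Cayley map, volume normalization, half-space entropy, $C^2$ estimates, and reflection plus Brendle--Choi--Daskalopoulos. However, the mechanism is missing or wrong at the decisive steps.

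\textbf{Part 1: the boundary Tso estimate and contraction.} The boundary identity is exactly $\nabla_\mu K^\alpha=K^\alpha$, with constant $1$, not $\le 0$. Also $\nabla_\mu u_p=h_{\mu\mu}(1-\langle p,X\rangle)\ge 0$. Hence for $f=K^\alpha/(u_p-\sigma)$ you get, on $\partial M_t$,
$\nabla_\mu\log f=1-h_{\mu\mu}(1-\langle p,X\rangle)/(u_p-\sigma)$.
This can be positive, because nothing forces $h_{\mu\mu}$ to be large. Moving $p$ toward the free boundary only decreases $1-\langle p,X\rangle$. So your quotient may attain its maximum on the boundary, and the Tso estimate does not close.

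The missing idea is the weight $e^{2\rho_B}$, where $\rho_B=\frac{1-|X|^2}{2}$ and $\nabla_\mu\rho_B=-1$. This gives $\nabla_\mu f\le -f$. The new interior terms $2f\mathcal L\rho_B$ and $4f\dot G^{ij}\nabla_i\rho_B\nabla_j\rho_B$ are $\le Cf^2$ only because of a time-independent bound $\kappa_i\ge c_1$. That bound comes from $\tr(h^{-1})$, whose conormal derivative is $\le 0$, together with $K\ge \min K_0$ when $n\alpha>1$.

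The same uniform bound is what excludes collapse to a segment. It yields Chou--Wang's $r_+^2\le Cr_-$ after rounding the orthogonal edge. An Andrews-type width/inradius pinching is not available for general $\alpha>0$, since Andrews' monotone-quantity argument requires $\alpha>\frac1{n+2}$.

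Your finite-time step is also flawed. The static claim that free-boundary caps have Gauss image bounded below is false: a slightly bent cap near an equatorial disk has tiny Gauss image. Moreover, H\"older converts $\int K\ge c$ into $\int K^\alpha\ge c$ only for $\alpha\ge 1$. The comparison $\mathcal L K^\alpha\ge n\alpha (K^\alpha)^{2+\frac1{n\alpha}}$, with the boundary sign $\nabla_\mu K^\alpha>0$, gives $T<\infty$ directly.

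\textbf{Part 2: almost-monotonicity and the limit.} The conformal error enters the entropy derivative through $J=\det(\Id+\rho qB)$, which contains $\overline\nabla^2 h$. H\"older or Blaschke--Santal\'o applied to $\int K^\alpha$ and $\int u$ cannot control this without curvature bounds; this is exactly the circularity you flagged. What works is $0\le 1-J\le C\rho\tr B=C\rho(\overline\Delta u+nu)$, followed by integration by parts against the entropy weight $u^{1-\frac1\alpha}$ using $\overline\nabla_\eta u=0$. This produces the term $\frac{1-\alpha}{\alpha}\int u^{-\frac1\alpha}|\overline\nabla u|^2\,d\sigma$. It is $\le 0$ for $\alpha\ge 1$, but for $\frac1{n+2}<\alpha<1$ it has the wrong sign and can be singular.

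In that range you need two-sided $C^0$ and $C^1$ bounds on the translated support function \emph{before} proving almost-monotonicity. Such bounds come from an entropy bound, so deriving the entropy bound from almost-monotonicity, as you propose, is circular. The paper breaks the circle with an independent input:
\begin{itemize}
\item a free-boundary Minkowski--Reilly inequality, proved via a mixed Neumann problem on $\Omega_t$ whose edge term vanishes by orthogonality;
\item this makes Andrews' quantity $|\Omega_t|^{\frac n{n+1}}\bigl(\int_{M_t}K^\alpha\bigr)^{\frac1{\alpha-1}}$ nonincreasing along the ball flow;
\item comparison with the doubled half-space body and Andrews' radius-ratio theorem then bound the entropy;
\item finally, the terminal entropy point is transported backward on unit time strips.
\end{itemize}

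Lastly, the volume normalization fixes only the radius of the limit sphere. To center it at the origin you must show that the entropy point satisfies $z_e(s)\to 0$. The paper derives this from almost-monotonicity via $|z_e(s)-e^{-1}z_e(s+1)|\to 0$.
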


The theorem has two complementary parts. The finite-time
contraction holds for every $\alpha>0$, while, under the
additional assumption $\alpha>\frac{1}{n+2}$, the normalized
flow exhibits smooth hemispherical asymptotics in the same
natural superaffine range as in the closed Euclidean setting.

\subsection{Discussion of the proof}
The proof has two main parts. In the first part, we establish finite-time
contraction of the flow \eqref{eq:flow} to a single point for every $\alpha>0$.
The spherical free-boundary condition gives the compatibility identity
\begin{equation*}
    \nabla_\mu K=\frac{1}{\alpha}K.
\end{equation*}
Applied to the harmonic curvature $\tr(h^{-1})$, this identity prevents a boundary maximum
and yields a time-independent positive lower bound for every principal
curvature. To obtain the complementary upper curvature estimate, we introduce
the boundary-adapted Tso quotient
\begin{equation*}
    f
    =
    e^{2\rho_{B}}
    \frac{K^\alpha}{u_p-\sigma},
    \qquad
    \rho_{B}=\frac{1-|X|^2}{2},
    \qquad
    u_p=\langle X-p,\nu\rangle.
\end{equation*}
The factor $e^{2\rho_{B}}$ makes the outward conormal derivative of
$f$ strictly negative on the free boundary. Hence a new maximum of
$f$ must occur in the interior. If the inradius remained positive up
to the maximal time, these estimates would give uniform two-sided
curvature bounds and the solution could be extended. It follows that
the inradius tends to zero. A radius comparison of Chou--Wang \cite[Lem 2.2]{CW00},
combined with the uniform lower curvature bound, then shows that the
circumradius also tends to zero. This rules out collapse to a
nontrivial segment and proves contraction to a single point on the
support sphere.

In the second part, we study the asymptotic shape near the extinction
point. After rotating the extinction point to $-e_{n+1}$, we use a
Cayley-type conformal map which sends the unit ball to the Euclidean
half-space, the extinction point to the origin, and the support sphere
to the flat wall $ \Pi$. 
The free-boundary condition becomes the exact Neumann condition
\begin{equation*}
    Z(\partial M,t)\subset\Pi,
    \qquad
    \langle N,e_{n+1}\rangle=0.
\end{equation*}
Thus  even reflection across $\Pi$ becomes available. See section \ref{sec:normalization} for the details.

\begin{figure}[h]
  \centering
  \includegraphics[width=0.9\textwidth]{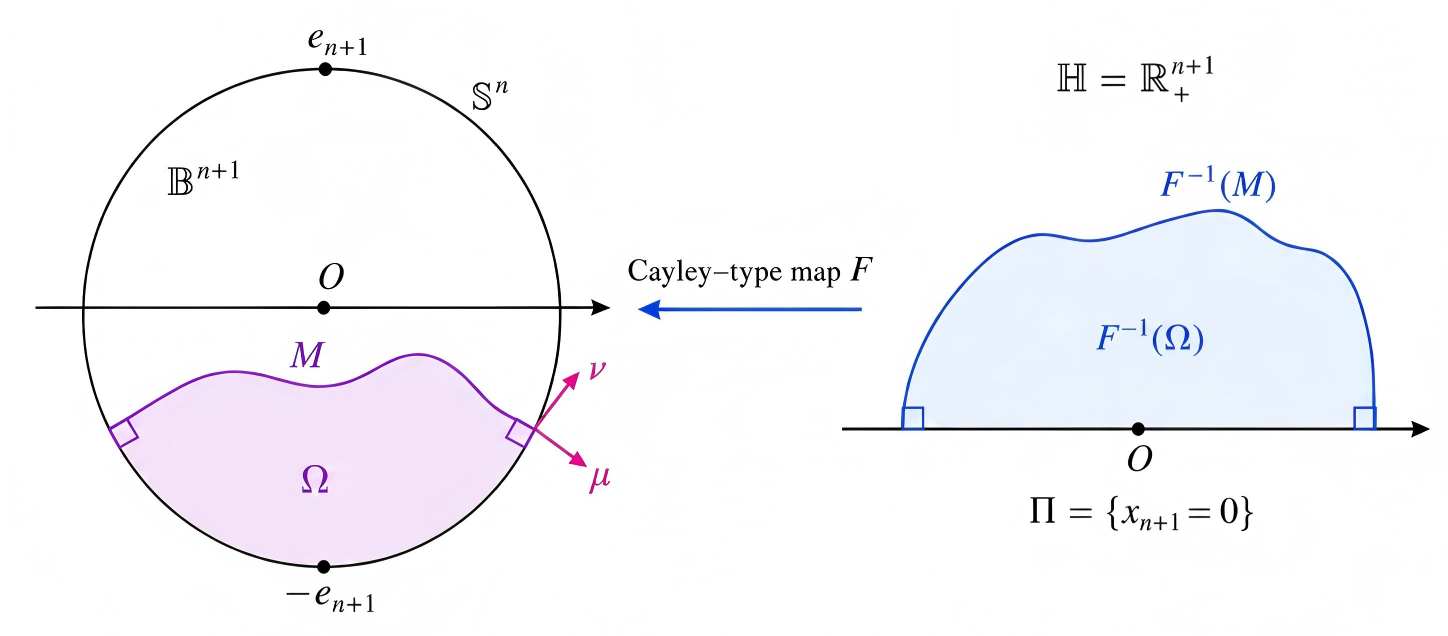}
  \caption{The Cayley-type conformal map}
\end{figure}

The price of this simplification is that the transformed speed is no
longer exactly $K^\alpha$. After
normalizing the enclosed half-space volume, the support function
satisfies
\begin{equation*}
    \partial_s h
    =
    h-
    \frac{\Theta K^\alpha}
    {\displaystyle
     \frac{1}{|\Sph^n_+|}
     \int_{\Sph^n_+}\Theta K^{\alpha-1}\,d\sigma},
    \qquad
    \overline{\nabla}_\eta h=0\quad \text{on}~\partial \mathbb{S}^n_+
\end{equation*}
where
\begin{equation*}
    \Theta
    =
    \Lambda^{n\alpha+1}
    \det(\Id+\rho qB)^\alpha.
\end{equation*}
Although $\Theta$ tends formally to one, it contains the curvature
tensor $B$. Consequently, a pointwise estimate for $\Theta$ would
require the very $C^2$ bounds that the entropy argument is intended
to produce. This circularity is the main analytic difficulty of the
paper, and differs significantly from both the closed Euclidean setting \cite{AGN2016,BCD2017,GN17} and the space-form treatment of Chen--Huang \cite{CH2024}.

We overcome it by proving an almost-monotonicity formula for a
half-space entropy. Instead of estimating
$\det(\Id+\rho qB)$ pointwise, we control its contribution after
integration by parts. The resulting error is bounded by the
unnormalized diameter, which decays integrably in the normalized time.
The argument takes different forms in the two ranges of $\alpha$. If
$\alpha\geq1$, the gradient term produced by integration by parts has
a favorable sign. If
\begin{equation*}
    \frac{1}{n+2}<\alpha<1,
\end{equation*}
the same term has the opposite sign and may become singular. In this
range, we first establish a free-boundary Minkowski--Reilly inequality
and use it to prove monotonicity of an Andrews-type quantity \cite[\S 4]{And00}. This
gives a preliminary entropy bound. We then transport the entropy point
backward from a terminal time, which provides uniform positive upper
and lower bounds for the translated support function on a unit time
strip. These estimates make the integration by parts argument
possible.

The almost-monotonicity yields the existence of the entropy limit and
shows that the tangential entropy point converges to the origin.
Together with the static entropy geometry, this gives uniform
$C^0$ and $C^1$ bounds for the normalized support function. Shifted
Tso estimates provide two-sided bounds for the transformed speed. A
spectral perturbation argument, based on
\cite[Lemma~5]{BCD2017}, then controls the largest principal radius and
gives uniform $C^2$ estimates. Higher-order estimates follow from
parabolic regularity. Every subsequential limit satisfies the exact
half-space soliton equation. After even reflection across $\Pi$, the
limit becomes a smooth closed uniformly convex shrinker. The
classification theorem of Brendle--Choi--Daskalopoulos \cite{BCD2017}
therefore implies that the limit is the unit sphere, and the original
free-boundary limit is the unit hemisphere. This completes the proof of Theorem \ref{thmA}.

\subsection{Organization of the paper}  Section~\ref{sec:prelim} derives the boundary geometry and evolution equations. Section~\ref{sec:extinction} proves finite-time contraction of the flow \eqref{eq:flow} to a single boundary point for every $\alpha>0$. Section~\ref{sec:normalization} introduces conformal half-space coordinates and volume normalization, yielding the normalized flow~\eqref{eq:support-flow-alpha} on the hemisphere. Section~\ref{sec:entropy-alpha} establishes the almost-monotonicity of the half-space entropy and derives the $C^0$ and $C^1$ bounds on $h$ via convergence of the entropy point. Section~\ref{sec:apriori-alpha} proves uniform curvature bounds and higher-order regularity for the normalized flow \eqref{eq:support-flow-alpha}. Finally, in Section~\ref{sec:convergence-alpha}, we complete the proof of smooth convergence to the unit hemisphere.

\section{Boundary geometry and evolution equations}
\label{sec:prelim}

In this paper, Latin indices range from $1$ to $n$, whereas Greek indices range from $2$ to $n$ and denote directions tangent to $\bd M_t$.  At the boundary, we take $e_1=\mu=X$ and choose $e_\alpha$ tangent to $\bd M_t$.  The second fundamental form is
\begin{equation*}
        h_{ij}=\langle D_{e_i}\nu,e_j \rangle,
\end{equation*}
with Weingarten map $h_i^j=g^{jk}h_{ik}$ and principal curvatures $\kappa_1,\ldots,\kappa_n>0$.  The Gauss curvature is $K=\det(h_i^j)$.  Set
\begin{equation*}
        G=K^\alpha,
        \qquad
        \dot G^{ij}=\frac{\bd G}{\bd h_{ij}},
        \qquad
        \ddot G^{ij,kl}=\frac{\bd^2G}{\bd h_{ij}\bd h_{kl}},
\end{equation*}
and define the linearized parabolic operator
\begin{equation}\label{eq:Lalpha}
        \cL=\partial_t-\dot G^{ij}\nabla_i\nabla_j.
\end{equation}
At a point where $h_i^j$ is diagonal,
\begin{equation}\label{eq:G-derivatives-diag}
        \dot G^{ij}=\alpha K^\alpha\kappa_i^{-1}\delta^{ij},
        \qquad
        \dot G^{ij}(h^2)_{ij}=\alpha K^\alpha H.
\end{equation}
The speed $G$ is homogeneous of degree $n\alpha$ in the principal curvatures: $\dot G^{ij}h_{ij}=n\alpha G$.  

\subsection{Boundary identities}

\begin{lemma}\label{lem:bd-identities-alpha}
At each point of $\partial M_t$, take an orthonormal frame $\{e_1=\mu,e_2,\dots,e_n\}$. We have the following identities:
\begin{enumerate}[label=\textup{(\roman*)}]
\item $\mu=X$ and $\mu\in TM_t$.
\item $h_{\mu \gamma}=0$ for every $\gamma=2,...,n$.
\item For every $\beta,\gamma=2,\dots,n$,
\begin{equation}\label{eq:bd-hab-alpha}
        \nabla_\mu h_{\beta\gamma}=-h_{\beta\gamma}+h_{\mu\mu}g_{\beta\gamma}.
\end{equation}
\item For the speed $G=K^\alpha$, we have
\begin{equation}\label{eq:bd-G-alpha}
        \nabla_\mu G=G,
        \qquad\hbox{equivalently}\qquad
        \nabla_\mu K=\frac{1}{\alpha} K.
\end{equation}
\item For every fixed $p\in\R^{n+1}$ and $u_p=\langle X-p,\nu\rangle$,
\begin{equation}\label{eq:bd-up-alpha}
        \nabla_\mu u_p=h((X-p)^T,\mu)=h_{\mu\mu}(1-\ip{p}{X}).
\end{equation}
In particular, if $p\in\ol\B^{n+1}$, then $\nabla_\mu u_p\ge0$.
\end{enumerate}
\end{lemma}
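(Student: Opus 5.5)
Items (i) and (ii) come straight from the orthogonality condition. On $\Sph^n$ the outward unit normal is the position vector $X$. Since $\langle\nu,X\rangle=0$ along $\partial M_t$, the vector $X$ is orthogonal to $\nu$, so it lies in $TM_t$. It is also orthogonal to $T\partial M_t\subset T\Sph^n$ and points out of $\B^{n+1}$, hence it is the outward conormal $\mu$. For (ii), differentiate $\langle\nu,X\rangle=0$ along a tangential direction $e_\gamma$ of $\partial M_t$:
\begin{equation*}
0=\langle D_{e_\gamma}\nu,X\rangle+\langle\nu,e_\gamma\rangle=h(e_\gamma,\mu).
\end{equation*}
Thus $\mu$ is a principal direction at the boundary, and the frame diagonalizes $h$ once the $e_\beta$ are chosen to diagonalize $h|_{T\partial M_t}$.

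For (iii) I would differentiate (ii) once more along $\partial M_t$ and use the Codazzi equation. The plan is to compute $\nabla_\beta(h(\mu,e_\gamma))=0$. The derivative of $\mu=X$ along $e_\beta$ is $D_{e_\beta}X=e_\beta$, which is tangent to $M_t$. Hence $\nabla_{e_\beta}\mu=e_\beta$, i.e.\ the boundary is umbilic in $M_t$ with second fundamental form $g_{\beta\gamma}$ with respect to $\mu$. Expanding gives
\begin{equation*}
0=(\nabla_\beta h)(\mu,e_\gamma)+h(\nabla_\beta\mu,e_\gamma)+h(\mu,\nabla_\beta e_\gamma).
\end{equation*}
Here $\nabla_\beta e_\gamma$ has normal component $-g_{\beta\gamma}\mu$ in $M_t$ plus a part tangent to $\partial M_t$, and the tangent part is killed by (ii). This yields $(\nabla_\beta h)(\mu,e_\gamma)=-h_{\beta\gamma}+h_{\mu\mu}g_{\beta\gamma}$, and Codazzi converts the left side into $\nabla_\mu h_{\beta\gamma}$. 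The sign bookkeeping of $\nabla_\beta e_\gamma$ is the main point to check.

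For (iv) I would differentiate the boundary condition in time; this is the main obstacle, since $\nabla_\mu h_{\mu\mu}$ is not given by (iii). Since $X(\partial M,t)\subset\Sph^n$, the velocity $\partial_tX$ may include a tangential component along $\mu$. I would write the flow as $\partial_tX=-G\nu+V$ with $V$ tangential, so that $\partial_t\nu=\nabla G$ up to terms from $V$. Differentiating $\langle\nu,X\rangle=0$ in time, the normal-speed part contributes $\langle\nabla G,X\rangle+\langle\nu,-G\nu\rangle=\nabla_\mu G-G$. The tangential part contributes $h(V,\mu)$-type terms that cancel against $\langle\nu, V\rangle=0$ after tracking. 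The conclusion is $\nabla_\mu G=G$, and dividing by $\alpha K^{\alpha-1}$ gives $\nabla_\mu K=K/\alpha$. I would verify the cancellation carefully, perhaps by working in a normal-graph parametrization where $V$ is explicit.

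For (v), $\nabla_i u_p=h_i^k\langle X-p,e_k\rangle$, so $\nabla_\mu u_p=h((X-p)^T,\mu)$. By (ii) only the $\mu$-component survives, namely $\langle X-p,\mu\rangle=|X|^2-\langle p,X\rangle=1-\langle p,X\rangle$. Since $|X|=1$ and $|p|\le1$, this factor is nonnegative, and $h_{\mu\mu}>0$ by strict convexity, giving $\nabla_\mu u_p\ge0$.
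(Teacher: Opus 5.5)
Your proposal is correct and follows the paper's route. The paper cites Stahl for (i)--(iii), and your derivation of (iii) by differentiating $h(\mu,e_\gamma)=0$ along $\partial M_t$ with $\nabla_{e_\beta}\mu=e_\beta$ and then applying Codazzi checks out. The paper proves (iv) exactly as you do, by differentiating $\langle\nu,\mu\rangle=0$ in time using $\partial_t\mu=\partial_tX=-G\nu$ and $\partial_t\nu=\nabla G$, and (v) by the same direct computation.

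One slip in (iv): since \eqref{eq:flow} prescribes $\partial_tX=-K^\alpha\nu$, there is no tangential velocity at all. Even for a general parametrization, the constraint $|X|=1$ forces any tangential part $V$ to satisfy $\langle V,\mu\rangle=0$, so $V$ cannot point along $\mu$ as you wrote. The extra term $h(V,\mu)$ then vanishes by (ii), not by "cancelling against" $\langle\nu,V\rangle=0$.
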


\begin{proof}
The first three statements do not depend on the speed and may be found in \cite[\S 2]{sta96}.

To prove \eqref{eq:bd-G-alpha}, observe that, along $\bd M_t$, the vector $\partial_tX=-G\nu$ is tangent to the support sphere.  The time derivative of the sphere normal is therefore $\partial_t\mu=\partial_tX=-G\nu$, while $\partial_t\nu=\nabla G$.  Hence
\begin{equation*}
        0=\partial_t\ip{\nu}{\mu}
        =\ip{\nabla G}{\mu}+\ip{\nu}{-G\nu}
        =\nabla_\mu G-G.
\end{equation*}
Since $G=K^\alpha$ and $\alpha>0$, this is equivalent to $\nabla_\mu K=K/\alpha$.

Finally,
\begin{align*}
        \nabla_\mu u_p=&\nabla_\mu \ip{X-p}{\nu}\\
        =&\ip{\mu}{\nu}+\ip{X-p}{D_\mu\nu}
        =h((X-p)^T,\mu).
\end{align*}
Using $h_{\mu \beta}=0$ for $\beta=2,\dots,n$ and $\mu=X$, we obtain
\begin{align*}
   \nabla_\mu u_p=& \sum_{\beta=1}^n \langle X-p,e_{\beta}\rangle  h_{\mu \beta}\\
   =&h_{\mu\mu}\langle X-p,X\rangle =h_{\mu\mu}(1-\langle p,X\rangle).
\end{align*}
\end{proof}

\begin{lemma}\label{lem:dm-hmm-alpha}
At a point of $\partial M_t$, choose an orthonormal frame with $e_1=\mu$ and $h_{\beta\gamma}=\kappa_{\beta}\delta_{\beta\gamma}$. Then
\begin{equation*}
        \nabla_\mu h_{\mu\mu}
        =h_{\mu\mu}\left(\frac{1}{\alpha}-
        \sum_{\gamma=2}^n\frac{h_{\mu\mu}-\kappa_{\gamma}}{\kappa_{\gamma}}\right).
\end{equation*}
\end{lemma}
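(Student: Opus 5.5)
The identity comes from the boundary relation $\nabla_\mu K=\frac1\alpha K$ of Lemma~\ref{lem:bd-identities-alpha}(iv), with the normal derivatives of the tangential block supplied by (iii). At a boundary point, work in the frame $e_1=\mu$, $e_\gamma$ ($\gamma\ge2$) with $h_{\beta\gamma}=\kappa_\beta\delta_{\beta\gamma}$. By (ii) we have $h_{\mu\gamma}=0$, so the whole second fundamental form is diagonal there, with eigenvalues $h_{\mu\mu},\kappa_2,\dots,\kappa_n$. In particular $K=h_{\mu\mu}\prod_{\gamma\ge2}\kappa_\gamma$ at this point.

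Differentiate $\log K=\log\det(h_{ij})-\log\det(g_{ij})$ in the direction $\mu$. Since the metric is parallel, this gives
\begin{equation*}
\frac{\nabla_\mu K}{K}=h^{ij}\nabla_\mu h_{ij}=\frac{\nabla_\mu h_{\mu\mu}}{h_{\mu\mu}}+\sum_{\gamma=2}^n\frac{\nabla_\mu h_{\gamma\gamma}}{\kappa_\gamma},
\end{equation*}
where diagonality is used to drop the off-diagonal contributions $h^{1\gamma}\nabla_\mu h_{1\gamma}$. The terms $\nabla_\mu h_{\gamma\gamma}$ are the components of the covariant derivative of the tensor $h$ evaluated on the frame, which is exactly how (iii) is stated, so no extra frame-rotation terms appear. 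The off-diagonal terms vanish because $h^{1\gamma}=0$ at the point, regardless of the value of $\nabla_\mu h_{1\gamma}$.

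Next substitute (iii) with $\beta=\gamma$, namely $\nabla_\mu h_{\gamma\gamma}=-\kappa_\gamma+h_{\mu\mu}$, and (iv), namely $\nabla_\mu K/K=1/\alpha$. This yields
\begin{equation*}
\frac{\nabla_\mu h_{\mu\mu}}{h_{\mu\mu}}=\frac1\alpha-\sum_{\gamma=2}^n\frac{h_{\mu\mu}-\kappa_\gamma}{\kappa_\gamma}.
\end{equation*}
Multiplying by $h_{\mu\mu}$, which is positive by strict convexity, gives the claim.

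The only delicate point is the bookkeeping just described: making sure $\nabla_\mu h_{\gamma\gamma}$ is read as a tensor component, and that the mixed terms drop out by diagonality. Beyond that the computation is routine, since (iii) and (iv) carry all the geometric content.
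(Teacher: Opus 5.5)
Your proposal is correct and is essentially the paper's proof: use $h_{\mu\gamma}=0$ to make the second fundamental form diagonal at the boundary point, take the logarithmic derivative of $K$ in the direction $\mu$, and substitute $\nabla_\mu K=K/\alpha$ and $\nabla_\mu h_{\gamma\gamma}=h_{\mu\mu}-\kappa_\gamma$. Your use of $h^{ij}\nabla_\mu h_{ij}$ to drop the mixed terms is a slightly more careful version of the paper's step of differentiating the pointwise product formula $K=h_{\mu\mu}\prod_{\gamma}h_{\gamma\gamma}$.
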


\begin{proof}
At the boundary,
\begin{equation*}
        K=h_{\mu\mu}\prod_{\gamma=2}^nh_{\gamma\gamma}.
\end{equation*}
Using $\nabla_\mu K=K/\alpha$ and \eqref{eq:bd-hab-alpha},
\begin{equation*}
        \frac1\alpha
        =\frac{\nabla_\mu h_{\mu\mu}}{h_{\mu\mu}}
        +\sum_{\gamma=2}^n\frac{\nabla_\mu h_{\gamma\gamma}}{h_{\gamma\gamma}}
        =\frac{\nabla_\mu h_{\mu\mu}}{h_{\mu\mu}}
        +\sum_{\gamma=2}^n\frac{h_{\mu\mu}-\kappa_\gamma}{\kappa_\gamma}.
\end{equation*}
This proves the formula.
\end{proof}

\subsection{Interior evolution equations}

\begin{lemma}\label{lem:evol-alpha}
Along \eqref{eq:flow}, with $\cL$ as in \eqref{eq:Lalpha},
\begin{align}
        \cL G&=G\dot G^{ij}(h^2)_{ij}=\alpha K^{2\alpha}H,\label{eq:evol-G}\\
        \cL h_i^j&=\ddot G^{pq,rs}\nabla_i h_{rs}\nabla^j h_{pq}
        +\dot G^{pq}(h^2)_{pq}h_i^j+(1-n\alpha)G(h^2)_i^j, \label{eq:hij}\\
        \cL u_p&=\dot G^{ij}(h^2)_{ij}u_p-(n\alpha+1)G
        =\alpha K^\alpha H u_p-(n\alpha+1)K^\alpha,\label{eq:evol-u-alpha}\\
\partial_td\mu_t&= -HGd\mu_t,\label{eq:dmu}\\
        \partial_t|X-p|^2&=-2G u_p.\label{eq:evol-dist-alpha}
\end{align}
Moreover, for $\rho_B=(1-|X|^2)/2$,
\begin{align}
\label{eq:evol-rho-alpha}
        \cL\rho_B&=\dot G^{ij}g_{ij}+(1-n\alpha)G\ip{X}{\nu},\\
\label{eq:bd-rho-alpha}
        \nabla_\mu\rho_B&=-1\qquad\hbox{on }\bd M_t.
\end{align}
\end{lemma}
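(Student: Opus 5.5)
These are standard evolution equations for the normal flow $\partial_tX=-G\nu$. I would derive them from the basic variational formulas:
\begin{equation*}
\partial_t g_{ij}=-2Gh_{ij},\qquad \partial_t\nu=\nabla G,\qquad \partial_t h_{ij}=\nabla_i\nabla_jG-G(h^2)_{ij}.
\end{equation*}
All of these follow by differentiating $\langle \partial_iX,\partial_jX\rangle$, $\langle\nu,\partial_iX\rangle=0$ and $h_{ij}=-\langle\partial_i\partial_jX,\nu\rangle$ (up to the sign convention $h_{ij}=\langle D_{e_i}\nu,e_j\rangle$). From $\partial_t g^{ij}=2Gh^{ij}$ one gets $\partial_t h_i^j=\nabla_i\nabla^jG+G(h^2)_i^j$.

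\emph{Speed and Weingarten map.} For \eqref{eq:evol-G} I would apply the chain rule, $\partial_tG=\dot G_j^{\,i}\partial_th_i^j=\dot G^{ij}\nabla_i\nabla_jG+G\dot G^{ij}(h^2)_{ij}$, and then use \eqref{eq:G-derivatives-diag}. For \eqref{eq:hij} I would expand $\nabla_i\nabla^jG=\dot G^{pq}\nabla_i\nabla^jh_{pq}+\ddot G^{pq,rs}\nabla_ih_{rs}\nabla^jh_{pq}$. The Codazzi equations and the Simons-type commutation formula in Euclidean space,
\begin{equation*}
\nabla_i\nabla_jh_{pq}=\nabla_p\nabla_qh_{ij}+h_{ij}(h^2)_{pq}-h_{pq}(h^2)_{ij}+h_{iq}(h^2)_{pj}-h_{pj}(h^2)_{iq},
\end{equation*}
let me commute derivatives. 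Contracting with $\dot G^{pq}$, which commutes with $h$, the last two terms cancel. Euler's relation $\dot G^{pq}h_{pq}=n\alpha G$ then turns $-\dot G^{pq}h_{pq}(h^2)_i^j+G(h^2)_i^j$ into $(1-n\alpha)G(h^2)_i^j$, which gives \eqref{eq:hij}. This Simons commutation, together with the correct bookkeeping of signs and index raising, is the only step needing care; everything else is direct.

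\emph{Support function and area.} For \eqref{eq:evol-u-alpha} I would write
\begin{equation*}
\partial_tu_p=\langle -G\nu,\nu\rangle+\langle X-p,\nabla G\rangle=-G+\langle (X-p)^T,\nabla G\rangle.
\end{equation*}
Then $\nabla_iu_p=h_{ik}\langle X-p,e_k\rangle$ and $\nabla_i\nabla_ju_p=\nabla_kh_{ij}\langle X-p,e_k\rangle+h_{ij}-(h^2)_{ij}u_p$. Contracting with $\dot G^{ij}$ and using $\dot G^{ij}\nabla_kh_{ij}=\nabla_kG$ and Euler's relation gives $\cL u_p=\dot G^{ij}(h^2)_{ij}u_p-(n\alpha+1)G$. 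The area element identity \eqref{eq:dmu} is $\tfrac12 g^{ij}\partial_tg_{ij}=-GH$. Formula \eqref{eq:evol-dist-alpha} is $\partial_t|X-p|^2=2\langle X-p,-G\nu\rangle$.

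\emph{The function $\rho_B$.} I would write $\rho_B=\tfrac12(1-|X|^2)$, so $\partial_t\rho_B=G\langle X,\nu\rangle$. Using $\nabla_i\nabla_j\tfrac12|X|^2=g_{ij}-h_{ij}\langle X,\nu\rangle$, we get $\dot G^{ij}\nabla_i\nabla_j\rho_B=-\dot G^{ij}g_{ij}+n\alpha G\langle X,\nu\rangle$, and this yields \eqref{eq:evol-rho-alpha}. Finally, on $\partial M_t$ we have $\nabla_\mu\rho_B=-\langle X,\mu\rangle=-|X|^2=-1$ by Lemma \ref{lem:bd-identities-alpha}(i), which is \eqref{eq:bd-rho-alpha}.
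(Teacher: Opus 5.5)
Your proposal is correct and follows the paper's proof: the computations for $u_p$, $|X-p|^2$ and $\rho_B$ (including $\nabla_i\nabla_j\rho_B=-g_{ij}+h_{ij}\langle X,\nu\rangle$ and $\nabla_\mu\rho_B=-\langle X,\mu\rangle=-1$) are the same as the paper's. The only difference is that you derive the equations for $G$, $h_i^j$ and $d\mu_t$ yourself, using the Euclidean Simons identity with the correct signs for this convention. The paper instead cites these as standard.
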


\begin{proof}
For a curvature flow $\partial_tX=-G\nu$ of hypersurfaces, the evolution equations for the speed $G$, the Weingarten tensor $h_i^j$ and the area element $d\mu_t$ are standard, see \cite{And94,CDKL19}.

For the equation \eqref{eq:evol-u-alpha} of the support function $u_p$, the time derivative is 
\begin{equation*}
    \partial_tu_p=\langle \partial_tX,\nu\rangle +\langle X-p,\partial_t\nu\rangle =-G+\langle X-p,\nabla G\rangle. 
\end{equation*}
The spatial derivatives are
\begin{align*}
       \nabla_iu_p=&\langle X-p,h_i^ke_k\rangle \\
       \nabla_j\nabla_iu_p=&h_{ij}-h_i^kh_{kj}u_p+\langle X-p,\nabla h_{ij}\rangle . 
\end{align*}
Combining these identities gives
\begin{align*}
    \cL u_p=&-G-\dot{G}^{ij}h_{ij}+\dot{G}^{ij}(h^2)_{ij}u_p\\
    =&\dot{G}^{ij}(h^2)_{ij}u_p-(n\alpha+1)G\\
    =&\alpha K^\alpha H u_p-(n\alpha+1)K^\alpha.
\end{align*}
Equation \eqref{eq:evol-dist-alpha} is immediate from
\begin{equation*}
        \partial_t|X-p|^2=2\ip{X-p}{-G\nu}=-2Gu_p.
\end{equation*}

For $\rho_B$, since $D\rho_B=-X$ and $D^2\rho_B=-\ol g$,
\begin{equation*}
        \partial_t\rho_B=\ip{-X}{-G\nu}=G\ip{X}{\nu},
\end{equation*}
and
\begin{equation*}
        \nabla_i\nabla_j\rho_B=-g_{ij}+h_{ij}\ip{X}{\nu}.
\end{equation*}
Therefore
\begin{equation*}
        \cL\rho_B=G\ip{X}{\nu}-\dot G^{ij}(-g_{ij}+h_{ij}\ip{X}{\nu})
        =\dot G^{ij}g_{ij}+(1-n\alpha)G\ip{X}{\nu}.
\end{equation*}
The boundary derivative follows from $\mu=X$.
\end{proof}

\section{Finite-time contraction to a boundary point}
\label{sec:extinction}

In this section we prove that the flow \eqref{eq:flow} has finite maximal time and contracts to a single point on $\mathbb{S}^n$.

\subsection{A lower bound for the speed}
\begin{lemma}\label{lem:K-lower-alpha}
Let $K_0$ be the Gauss curvature of the initial strictly convex free-boundary hypersurface $M_0$, and suppose that $M_t=X(M,t)$ solves \eqref{eq:flow} on $[0,T)$.  Then
\begin{equation}\label{eq:K-lower-alpha}
        K(x,t)\ge
        \left(\big(\min_MK_0\big)^{-\frac{n\alpha+1}{n}}-(n\alpha+1)t\right)^{-\frac{n}{n\alpha+1}}
\end{equation}
for all $(x,t)\in M\times [0,T)$.  In particular, the maximal existence time of the flow \eqref{eq:flow} satisfies
\begin{equation*}
        T\le \frac1{n\alpha+1}\big(\min_MK_0\big)^{-\frac{n\alpha+1}{n}}.
\end{equation*}
\end{lemma}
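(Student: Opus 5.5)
We apply the maximum principle to the speed $G=K^\alpha$ through its evolution equation \eqref{eq:evol-G}, and use the boundary identity \eqref{eq:bd-G-alpha} to rule out a boundary minimum. The comparison ODE is driven by $H\ge nK^{1/n}$, which is the arithmetic–geometric mean inequality applied to the principal curvatures. Strict convexity on $[0,T)$ is part of the setting, since $\dot G^{ij}$ is positive definite there. We therefore work on any interval $[0,T']\subset[0,T)$ on which $M_t$ is strictly convex.

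\emph{Step 1 (ODE inequality).} By \eqref{eq:evol-G}, $\cL G=\alpha K^{2\alpha}H\ge n\alpha K^{2\alpha+1/n}=n\alpha\, G^{2+\frac{1}{n\alpha}}$. Let $\phi(t)$ solve $\phi'=n\alpha\,\phi^{2+\frac1{n\alpha}}$ with $\phi(0)=\min_M K_0^\alpha$. Write $\phi=k^\alpha$. Then $k'=n\,k^{1+\frac{n\alpha+1}{n}}$, so $(k^{-\frac{n\alpha+1}{n}})'=-(n\alpha+1)$. This gives
\[
k(t)=\Big(\big(\min_M K_0\big)^{-\frac{n\alpha+1}{n}}-(n\alpha+1)t\Big)^{-\frac{n}{n\alpha+1}},
\]
which is exactly the right-hand side of \eqref{eq:K-lower-alpha}.

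\emph{Step 2 (comparison).} We show $G\ge\phi(t)$. Take $\varepsilon>0$ and compare $G$ with the solution $\phi_\varepsilon$ of the same ODE started at $\phi(0)-\varepsilon$. Suppose $t_0$ is the first time at which $G-\phi_\varepsilon$ reaches $0$, attained at a point $x_0$.
\begin{itemize}
\item If $x_0$ is an interior point, then $\nabla^2 G\ge0$ there, and $\partial_t(G-\phi_\varepsilon)\le0$. On the other hand, since $G=\phi_\varepsilon$ at that point, $\cL(G-\phi_\varepsilon)\ge0$ makes the difference nonincreasing only in a way compatible with the ODE. The standard strict-inequality perturbation, for example replacing $\phi_\varepsilon$ by the solution of the ODE with an extra $-\varepsilon$ forcing term, yields a contradiction.
\item If $x_0\in\partial M$, then at a spatial minimum on the boundary we must have $\nabla_\mu G\le0$, because $\mu$ is the outward conormal. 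But \eqref{eq:bd-G-alpha} gives $\nabla_\mu G=G=\phi_\varepsilon>0$, a contradiction.
\end{itemize}
Hence no boundary minimum occurs. Letting $\varepsilon\to0$ gives $G\ge\phi$, which is \eqref{eq:K-lower-alpha}.

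\emph{Step 3 (bound on $T$).} The lower bound on $K$ is finite and positive only while the expression inside the parentheses stays positive. If $T$ exceeded $\frac{1}{n\alpha+1}(\min_M K_0)^{-\frac{n\alpha+1}{n}}$, then $K$ would be $+\infty$ at that time, contradicting smoothness of the solution. This gives the stated bound on $T$.

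The only delicate point is the boundary case in Step 2. It is handled cleanly because the sign of \eqref{eq:bd-G-alpha} is favorable: the conormal derivative $\nabla_\mu G=G$ is strictly positive, which is incompatible with a boundary minimum.
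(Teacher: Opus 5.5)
Your proposal is correct and follows the paper's proof essentially line by line. It uses the same ODE comparison for $G=K^\alpha$ driven by $\cL G\ge n\alpha G^{2+\frac1{n\alpha}}$, excludes a boundary touching point because $\nabla_\mu G=G>0$ contradicts the nonpositive outward conormal derivative at a boundary minimum, and gets the bound on $T$ from the blow-up time of the comparison solution. The only difference is that you spell out the interior case. There, merely lowering the initial value gives no contradiction at a touching point; what closes it is the $-\varepsilon$ forcing term you mention, or equivalently the linear maximum principle for $\cL w\ge c\,w$ with $c$ bounded.
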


\begin{proof}
We apply the maximum principle to the evolution equation \eqref{eq:evol-G} for $G=K^\alpha$.  Since $H\geq nK^{1/n}$, we have
\begin{equation*}
        \cL G=\alpha K^{2\alpha}H\ge n\alpha K^{2\alpha+1/n}
        =n\alpha G^{2+1/(n\alpha)}.
\end{equation*}
Set $G_0=\min_MK_0^\alpha$ and
\begin{equation*}
        y(t)=\left(G_0^{-\frac{n\alpha+1}{n\alpha}}-(n\alpha+1)t\right)^{-\frac{n\alpha}{n\alpha+1}}.
\end{equation*}
Then
\begin{equation*}
  \dfrac{\mathrm{d}y}{\mathrm{d}t}=n\alpha y^{2+1/(n\alpha)}
\end{equation*}
and 
\begin{equation*}
        \mathcal{L}(G-y)\ge n\alpha\big(G^{2+1/(n\alpha)}-y^{2+1/(n\alpha)}\big).
\end{equation*}
Initially, $G-y\ge0$. A first interior zero would contradict the parabolic maximum principle. At a first boundary zero, \eqref{eq:bd-G-alpha} gives
\begin{equation*}
        \nabla_\mu(G-y)=\nabla_\mu G=G>0,
\end{equation*}
whereas the outward normal derivative at a boundary minimum must be nonpositive. Thus $G\ge y$, which is equivalent to \eqref{eq:K-lower-alpha}.  The upper bound for $T$ follows from the blow-up time of $y$.
\end{proof}

\subsection{A lower bound for the principal curvatures}
\begin{lemma}\label{lem:kappa-lower-alpha}
Let $M_t$ be a solution of \eqref{eq:flow} on $[0,T)$.  There exists $c_1=c_1(n,\alpha,M_0)>0$ such that
\begin{equation}\label{eq:kappa-lower-alpha}
        \kappa_i(x,t)\ge c_1,
        \qquad i=1,\ldots,n,
\end{equation}
for all $(x,t)\in M\times[0,T)$.
\end{lemma}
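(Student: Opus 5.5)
We apply the maximum principle to the harmonic curvature $F=\tr(h^{-1})=\sum_i\kappa_i^{-1}$, which is the sum of the principal radii. The goal is to show $\max_{M}F(\cdot,t)\le \max_M F(\cdot,0)$. Since $\kappa_i^{-1}\le F$ for each $i$, this gives $\kappa_i\ge c_1:=1/\max_M F(\cdot,0)$. The strategy follows the Introduction: in the interior, $F$ satisfies a parabolic inequality with no positive zeroth-order term at a maximum. On the boundary, the identities of Lemma~\ref{lem:bd-identities-alpha} and Lemma~\ref{lem:dm-hmm-alpha} force the outward conormal derivative of $F$ to be strictly negative, which rules out a boundary maximum.

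\emph{Interior step.} Let $b=h^{-1}$ denote the inverse Weingarten map. From \eqref{eq:hij} we have $\cL b = -b\,(\cL h)\,b - 2\dot G^{ij} b\,\nabla_i h\, b\,\nabla_j h\, b$. Tracing and working at a point where $h$ is diagonal gives
\begin{equation*}
\cL F = -\ddot G^{pq,rs}\nabla_k h_{rs}\nabla_k h_{pq}\,\kappa_k^{-2}-2\sum\dot G^{ii}\kappa_k^{-2}\kappa_l^{-1}(\nabla_i h_{kl})^2 - \dot G^{pq}(h^2)_{pq}F-(1-n\alpha)nG .
\end{equation*}
The reaction terms combine to $-\alpha K^\alpha HF-(1-n\alpha)nK^\alpha$. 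Because $HF\ge n^2$, this is at most $-nK^\alpha<0$, so it has a good sign. For the gradient terms, I would use the standard fact that $G=K^\alpha$, viewed as a function of the principal radii $\tau_i=\kappa_i^{-1}$, is $G=\prod\tau_i^{-\alpha}$. Then $\log G$ is convex in $\tau$, so $\varphi:=-G^{-1/(n\alpha)}=-(\prod\tau_i)^{1/n}$ is concave in $\tau$. This is exactly the inverse-concavity condition used by Andrews (and in Chen--Huang) that makes the combined quadratic gradient terms in $\cL F$ nonpositive, after using the Codazzi symmetry of $\nabla h$. Consequently $\cL F\le0$ in the interior, and an interior new maximum is impossible.

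\emph{Boundary step, which is the main obstacle.} At a boundary point, take $e_1=\mu$ with $h_{1\gamma}=0$ by (ii), and diagonalize $h_{\beta\gamma}=\kappa_\beta\delta_{\beta\gamma}$. Then $F=h_{\mu\mu}^{-1}+\sum_\gamma\kappa_\gamma^{-1}$. By \eqref{eq:bd-hab-alpha} and Lemma~\ref{lem:dm-hmm-alpha},
\begin{equation*}
\nabla_\mu F=-\frac{\nabla_\mu h_{\mu\mu}}{h_{\mu\mu}^2}-\sum_\gamma\frac{\nabla_\mu h_{\gamma\gamma}}{\kappa_\gamma^2}
=-\frac1{h_{\mu\mu}}\Big(\frac1\alpha-\sum_\gamma\frac{h_{\mu\mu}-\kappa_\gamma}{\kappa_\gamma}\Big)-\sum_\gamma\frac{h_{\mu\mu}-\kappa_\gamma}{\kappa_\gamma^2}.
\end{equation*}
Writing $a=h_{\mu\mu}$, the $\gamma$-th term collects to
\begin{equation*}
\frac{a-\kappa_\gamma}{a\kappa_\gamma}-\frac{a-\kappa_\gamma}{\kappa_\gamma^2}=-\frac{(a-\kappa_\gamma)^2}{a\kappa_\gamma^2}\le0 .
\end{equation*}
Hence $\nabla_\mu F=-\frac{1}{\alpha a}-\sum_\gamma\frac{(a-\kappa_\gamma)^2}{a\kappa_\gamma^2}<0$. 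At a first time where $\max F$ exceeds its initial value and is attained on $\partial M$, the outward conormal derivative would have to be $\ge0$, which is a contradiction. One also needs $\mu$ to be an admissible direction for the Hopf-type argument; this holds because $\mu$ is the outward conormal by (i).

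Combining the two steps, $\max F$ is nonincreasing, so we may take $c_1=(\max_{M}\tr(h_0^{-1}))^{-1}$, which depends only on $M_0$. The delicate point I expect is the interior gradient-term sign. If the inverse-concavity computation proves awkward, the fallback is to work with $\max_i \kappa_i^{-1}$ directly through a perturbation/viscosity argument. That route would redo the boundary step with the largest radius, and the boundary computation above already indicates the right sign there.
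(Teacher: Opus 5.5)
Your proposal is correct and is essentially the paper's proof. It uses the same test function $\Phi=\tr(h^{-1})$ and the identical boundary computation $\nabla_\mu\Phi=-\frac{1}{\alpha h_{\mu\mu}}-\sum_{\gamma}\frac{(h_{\mu\mu}-\kappa_\gamma)^2}{h_{\mu\mu}\kappa_\gamma^2}<0$, followed by the interior maximum principle.

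\textbf{Gradient terms.} The paper does not quote inverse-concavity. It expands $\ddot G$ explicitly and symmetrizes in the derivative index via Codazzi. Your appeal to inverse-concavity is equivalent, but the justification as written has a sign slip: $\tau\mapsto-(\prod_i\tau_i)^{1/n}$ is convex, not concave. What is needed is concavity of $\tau\mapsto -G(\tau^{-1})=-\prod_i\tau_i^{-\alpha}$, which does follow from convexity of $\log G$ in $\tau$. At the matrix level this is precisely $\ddot G^{ij,kl}\eta_{ij}\eta_{kl}+2\sum_{k,q}\dot G^{kk}\kappa_q^{-1}\eta_{kq}^2\ge0$. This is the bracket that multiplies $-\kappa_p^{-2}$ after applying Codazzi with $\eta=\nabla_p h$. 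For $G=K^\alpha$ it equals $\alpha K^\alpha\bigl[\alpha(\sum_i\eta_{ii}/\kappa_i)^2+\sum_{i,j}\eta_{ij}^2/(\kappa_i\kappa_j)\bigr]\ge0$.

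\textbf{Reaction terms.} Here your argument is sharper than the paper's. Using $H\Phi\ge n^2$ you get $-\alpha K^\alpha H\Phi+n(n\alpha-1)K^\alpha\le-nK^\alpha<0$ for every $\alpha>0$, so no new interior maximum can occur at all. The paper instead treats $n\alpha>1$ separately, bounding $\Phi$ at a new maximum via $H\ge nK^{1/n}$ and $K\ge\min K_0$ from Lemma~\ref{lem:K-lower-alpha}. In fact its condition $H\Phi\le n(n\alpha-1)/\alpha=n^2-n/\alpha$ already contradicts $H\Phi\ge n^2$. Your version shows that $\max_M\Phi$ is nonincreasing and gives $c_1=(\max_M\tr h_0^{-1})^{-1}$ without needing Lemma~\ref{lem:K-lower-alpha}. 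The fallback via $\max_i\kappa_i^{-1}$ is therefore unnecessary.
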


\begin{proof}
We use the reciprocal mean curvature quantity
\begin{equation*}
        \Phi=\tr(h^{-1})=b_i{}^i=\sum_{i=1}^n\frac{1}{\kappa_i},
        \qquad b_i{}^j=(h^{-1})_i{}^j.
\end{equation*}
A uniform upper bound for $\Phi$ is equivalent to a uniform positive lower bound for all principal curvatures.

We only need to verify that the free boundary cannot create a new maximum of $\Phi$.  At a boundary point choose an orthonormal frame with $e_1=\mu$ and $h_{\beta\gamma}=\kappa_\beta\delta_{\beta\gamma}$. Since $h_{\mu\beta}=0$, the eigenvalue in the conormal direction is $h_{\mu\mu}$. Differentiating $\Phi$ in the outward conormal direction gives
\begin{equation*}
        \nabla_\mu\Phi
        =-\frac{\nabla_\mu h_{\mu\mu}}{h_{\mu\mu}^2}
          -\sum_{\gamma=2}^n\frac{\nabla_\mu h_{\gamma\gamma}}{\kappa_\gamma^2}.
\end{equation*}
By Lemma \ref{lem:dm-hmm-alpha},
\begin{equation*}
        \frac{\nabla_\mu h_{\mu\mu}}{h_{\mu\mu}}
        =\frac1\alpha-
        \sum_{\gamma=2}^n\frac{h_{\mu\mu}-\kappa_\gamma}{\kappa_\gamma},
\end{equation*}
and by \eqref{eq:bd-hab-alpha}, $\nabla_\mu h_{\gamma\gamma}=h_{\mu\mu}-\kappa_\gamma$.  Therefore
\begin{equation}\label{eq:Dphi}
\begin{aligned}
        \nabla_\mu\Phi
        &=-\frac1{\alpha h_{\mu\mu}}
          +\sum_{\gamma=2}^n\frac{h_{\mu\mu}-\kappa_\gamma}{h_{\mu\mu}\kappa_\gamma}
          -\sum_{\gamma=2}^n\frac{h_{\mu\mu}-\kappa_\gamma}{\kappa_\gamma^2}  \\
        &=-\frac1{\alpha h_{\mu\mu}}
          -\sum_{\gamma=2}^n
          \frac{(h_{\mu\mu}-\kappa_\gamma)^2}{h_{\mu\mu}\kappa_\gamma^2}
        \le0.
\end{aligned}
\end{equation}
Thus, any first maximum of $\Phi$ that exceeds its initial maximum must occur in the interior.  

We next compute the interior inequality in indices.  Fix an interior point and choose a local orthonormal frame which is normal at this point and diagonalizes the Weingarten map.  Thus
\begin{equation*}
        h_i{}^j=\kappa_i\delta_i{}^j,
        \qquad
        b_i{}^j=\kappa_i^{-1}\delta_i{}^j.
\end{equation*}
The identity $b_i{}^p h_p{}^j=\delta_i{}^j$ gives, after differentiating once in space or time,
\begin{equation*}
        \partial_t b_i{}^j=-b_i{}^p(\partial_t h_p{}^q)b_q{}^j,
        \qquad
        \nabla_k b_i{}^j=-b_i{}^p(\nabla_k h_p{}^q)b_q{}^j.
\end{equation*}
Differentiating the spatial identity once more gives
\begin{align*}
        \nabla_k\nabla_l b_i{}^j
        ={}&-b_i{}^p(\nabla_k\nabla_l h_p{}^q)b_q{}^j  \\
        &+b_i{}^a(\nabla_kh_a{}^p)b_p{}^r(\nabla_lh_r{}^q)b_q{}^j  \\
        &+b_i{}^p(\nabla_lh_p{}^q)b_q{}^a(\nabla_kh_a{}^r)b_r{}^j .
\end{align*}
Applying $\cL=\partial_t-\dot G^{kl}\nabla_k\nabla_l$ and using the symmetry of $\dot G^{kl}$, we obtain
\begin{align*}
        \cL b_i{}^j
        ={}&-b_i{}^p b_q{}^j\,\cL h_p{}^q  -2\dot G^{kl}b_i{}^p b_q{}^r b_s{}^j
        (\nabla_k h_p{}^q)(\nabla_l h_r{}^s).
\end{align*}
Taking the trace yields
\begin{equation}\label{eq:L-b-trace-alpha}
        \cL\Phi
        =-b_i{}^p b_q{}^i\,\cL h_p{}^q
        -2\dot G^{kl}b_i{}^p b_q{}^r b_s{}^i
        (\nabla_k h_p{}^q)(\nabla_l h_r{}^s).
\end{equation}

We now substitute the evolution equation \eqref{eq:hij}.  The zero-order part is
\begin{equation}\label{eq:Phi-zero-alpha}
\begin{aligned}
 &-b_i{}^p b_q{}^i
 \left(\dot G^{rs}(h^2)_{rs}h_p{}^q+(1-n\alpha)G(h^2)_p{}^q\right)  \\
 &\qquad
 =-\dot G^{rs}(h^2)_{rs}\,\tr(h^{-1})
   -(1-n\alpha)G\,\tr(b^2h^2)  \\
 &\qquad
 =-\alpha K^\alpha H\Phi+n(n\alpha-1)K^\alpha.
\end{aligned}
\end{equation}
Here we used $\tr h^{-1}=\Phi$, $\tr (b^2h^2)=n$, and
\begin{equation*}
        \dot G^{rs}(h^2)_{rs}=\alpha K^\alpha H.
\end{equation*}

It remains to check the gradient terms $\mathcal G$.  For $G=K^\alpha$, at the diagonal point,
\begin{equation*}
        \ddot G^{ij,kl}\eta_{ij}\eta_{kl}
        =\alpha K^\alpha
        \left[\alpha\left(\sum_i\frac{\eta_{ii}}{\kappa_i}\right)^2
        -\sum_{i,j}\frac{\eta_{ij}^2}{\kappa_i\kappa_j}\right]
\end{equation*}
for every symmetric two-tensor $\eta$.  Hence the contribution from the $\ddot G$ term in \eqref{eq:hij} is
\begin{align*}
 &-b_i{}^p b_q{}^i\ddot G^{ab,cd}
        (\nabla_p h_{ab})(\nabla^q h_{cd})  \\
 &\quad
 =-\sum_p\frac1{\kappa_p^2}\ddot G^{ab,cd}
        (\nabla_p h_{ab})(\nabla_p h_{cd})  \\
 &\quad
 =-\alpha^2K^\alpha\sum_p\frac1{\kappa_p^2}
        \left(\sum_i\frac{\nabla_p h_{ii}}{\kappa_i}\right)^2
   +\alpha K^\alpha\sum_{p,i,j}
        \frac{(\nabla_p h_{ij})^2}{\kappa_p^2\kappa_i\kappa_j}.
\end{align*}
The second gradient term in \eqref{eq:L-b-trace-alpha} becomes
\begin{align*}
 &-2\dot G^{kl}b_i{}^p b_q{}^r b_s{}^i
        (\nabla_k h_p{}^q)(\nabla_l h_r{}^s)  \\
 &\quad
 =-2\alpha K^\alpha\sum_{p,i,j}
        \frac{(\nabla_p h_{ij})^2}{\kappa_p\kappa_i^2\kappa_j}.
\end{align*}
Combining the two displayed formulas gives
\begin{align*}
\mathcal G
={}&-\alpha^2K^\alpha\sum_p\frac1{\kappa_p^2}
        \left(\sum_i\frac{\nabla_p h_{ii}}{\kappa_i}\right)^2 \\
&+\alpha K^\alpha\sum_{p,i,j}
        \left(\frac1{\kappa_p^2\kappa_i\kappa_j}
        -\frac2{\kappa_p\kappa_i^2\kappa_j}\right)
        (\nabla_p h_{ij})^2 .
\end{align*}
The second line is non-positive after symmetrization.  Indeed, by Codazzi equations,  $\nabla_p h_{ij}=\nabla_i h_{pj}$.  Therefore
\begin{align*}
&\sum_{p,i,j}
        \left(\frac1{\kappa_p^2\kappa_i\kappa_j}
        -\frac2{\kappa_p\kappa_i^2\kappa_j}\right)(\nabla_p h_{ij})^2  \\
&\quad
=\frac12\sum_{p,i,j}
        \left[
        \left(\frac1{\kappa_p^2\kappa_i\kappa_j}
        -\frac2{\kappa_p\kappa_i^2\kappa_j}\right)
        +
        \left(\frac1{\kappa_i^2\kappa_p\kappa_j}
        -\frac2{\kappa_i\kappa_p^2\kappa_j}\right)
        \right](\nabla_p h_{ij})^2  \\
&\quad
=-\frac12\sum_{p,i,j}
        \left(\frac1{\kappa_p^2\kappa_i\kappa_j}
        +\frac1{\kappa_p\kappa_i^2\kappa_j}\right)(\nabla_p h_{ij})^2
\le0.
\end{align*}
Thus $\mathcal G\le0$.  Combining this with \eqref{eq:Phi-zero-alpha} gives the interior inequality
\begin{equation}\label{eq:L-Phi-alpha-detailed}
        \cL\Phi
        \le -\alpha K^\alpha H\Phi+n(n\alpha-1)_+K^\alpha.
\end{equation}

If $n\alpha\le1$, the right-hand side is non-positive at every positive interior maximum. Together with the boundary sign \eqref{eq:Dphi}, the maximum principle gives
\begin{equation*}
        \Phi(\cdot,t)\le \max_M\Phi(\cdot,0).
\end{equation*}
If $n\alpha>1$ and a new interior maximum occurs, then $0\le\cL\Phi$ at that point.  From \eqref{eq:L-Phi-alpha-detailed} we obtain
\begin{equation*}
        H\Phi\le \frac{n(n\alpha-1)}{\alpha}.
\end{equation*}
Since $H\ge nK^{1/n}$ and Lemma \ref{lem:K-lower-alpha} implies $K\ge \min_MK_0$ on $[0,T)$, the value of $\Phi$ at such a maximum is bounded by
\begin{equation*}
        \Phi\le \frac{n\alpha-1}{\alpha(\min_MK_0)^{1/n}}.
\end{equation*}
Combining this with the initial maximum gives a constant $C_1=C_1(n,\alpha,M_0)$ such that $\Phi\le C_1$ everywhere.  Hence $\kappa_i^{-1}\le \Phi\le C_1$ and \eqref{eq:kappa-lower-alpha} follows with $c_1=C_1^{-1}$.

\end{proof}

\subsection{A boundary-adapted Tso estimate}
\label{subsec:Tso-alpha}
We prove a Tso-type upper bound for the Gauss curvature under a positive lower bound for the inradius.  The following estimate for the boundary defining function $\rho_B$ excludes a boundary maximum of the modified Tso quotient.

\begin{lemma}\label{lem:rho-est-alpha}
There exists
$C=C(n,\alpha,M_0)$ such that
\begin{equation*}
        |\mathcal{L}\rho_B|\le CG,
        \qquad
        \dot G^{ij}\nabla_i\rho_B\nabla_j\rho_B\le CG.
\end{equation*}
\end{lemma}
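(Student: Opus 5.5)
We will read both bounds off the explicit formulas in Lemma~\ref{lem:evol-alpha} together with the diagonal form \eqref{eq:G-derivatives-diag} of $\dot G^{ij}$. The only nontrivial input is the uniform lower bound $\kappa_i\ge c_1$ from Lemma~\ref{lem:kappa-lower-alpha}. Since everything lies in $\ol\B^{n+1}$, we also have $|X|\le1$ and $|\ip{X}{\nu}|\le1$.

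We begin with $\cL\rho_B$. By \eqref{eq:evol-rho-alpha},
\begin{equation*}
\cL\rho_B=\dot G^{ij}g_{ij}+(1-n\alpha)G\ip{X}{\nu}.
\end{equation*}
The second term is bounded in absolute value by $|1-n\alpha|\,G$. For the first, at a point where the Weingarten map is diagonal, \eqref{eq:G-derivatives-diag} gives
\begin{equation*}
\dot G^{ij}g_{ij}=\alpha K^\alpha\sum_i\kappa_i^{-1}=\alpha G\,\Phi,
\end{equation*}
with $\Phi=\tr(h^{-1})$. Lemma~\ref{lem:kappa-lower-alpha} yields $\Phi\le n/c_1$, equivalently $\Phi\le C_1$ from its proof. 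Hence $0\le \dot G^{ij}g_{ij}\le n\alpha c_1^{-1}G$, and therefore
\begin{equation*}
|\cL\rho_B|\le\bigl(n\alpha c_1^{-1}+|1-n\alpha|\bigr)G.
\end{equation*}

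For the gradient term, $\nabla_i\rho_B=-\ip{X}{e_i}$, so $|\nabla\rho_B|^2=|X^T|^2\le|X|^2\le1$. In the diagonalizing frame,
\begin{equation*}
\dot G^{ij}\nabla_i\rho_B\nabla_j\rho_B=\alpha G\sum_i\kappa_i^{-1}\ip{X}{e_i}^2\le \alpha G\,c_1^{-1}|X^T|^2\le \alpha c_1^{-1}G.
\end{equation*}
Taking $C=n\alpha c_1^{-1}+|1-n\alpha|$ gives both estimates simultaneously, with $C$ depending only on $n,\alpha,M_0$ through $c_1$.

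The only genuine step is the uniform control of $\dot G^{ij}$ relative to $G$, that is, the bound $\dot G^{ij}\le \alpha c_1^{-1}G\,g^{ij}$. This comes entirely from the principal curvature lower bound of Lemma~\ref{lem:kappa-lower-alpha}. Without that lemma, $\dot G^{ij}g_{ij}/G=\alpha\sum_i\kappa_i^{-1}$ could blow up. The remaining estimates are immediate from $|X|\le1$, and the argument is valid at boundary points as well, since it is purely pointwise.
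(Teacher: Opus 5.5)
Your proof is correct and follows the paper's argument: you bound $\dot G^{ij}g_{ij}=\alpha G\sum_i\kappa_i^{-1}$ using the lower curvature bound of Lemma~\ref{lem:kappa-lower-alpha} and $|\langle X,\nu\rangle|\le1$ in \eqref{eq:evol-rho-alpha}. The gradient term is then controlled by $|\nabla\rho_B|\le|X|\le1$ in the principal frame, exactly as the paper does.
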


\begin{proof}
By \eqref{eq:G-derivatives-diag} and the lower bound for the principal curvatures in \eqref{eq:kappa-lower-alpha},
\begin{equation*}
        \dot G^{ij}g_{ij}
        =\alpha G\sum_i\kappa_i^{-1}\le CG.
\end{equation*}
Together with $|\langle X,\nu\rangle|\le1$, this proves the first estimate from \eqref{eq:evol-rho-alpha}.

Since
$|\nabla\rho_B|\le|D\rho_B|=|X|\le1$, combining this inequality with \eqref{eq:kappa-lower-alpha} yields
\begin{equation*}
    \dot{G}^{ij}\nabla_i \rho_B \nabla_j \rho_B = \alpha G \sum\limits_{i=1}^n \dfrac{(\nabla_i \rho_B)^2}{\kappa_i}\leq CG.
\end{equation*}
This is the second estimate.
\end{proof}

Let $\Omega_t$ be the convex body bounded by $M_t$ and $\mathbb{S}^n$. Let $p\in\overline{\mathbb{B}}^{n+1}$, and choose $\sigma>0$ and $T_{\sigma}>0$ such that $B_{2\sigma}(p)\subset\Omega_t$ throughout $M\times [0,T_{\sigma}]$. Then $u_p=\langle X-p,\nu \rangle\geq 2\sigma$ on $M\times [0,T_{\sigma}]$.  Consider the Tso-type \cite{Tso1985} auxiliary function $f: M\times [0,T_{\sigma}]\to\mathbb{R}$ defined by
\begin{equation*}
f(x,t) = \psi \cdot \dfrac{G}{u_p-\sigma}(x,t),
\end{equation*}
where $\psi=\mathrm{e}^{2\rho_B}$.  We compute the evolution of $f$ along \eqref{eq:flow}.

\begin{lemma}\label{lem:Tso-quotient-alpha}
Along \eqref{eq:flow}, the function $f$ satisfies
\begin{align}
\mathcal{L} f=&2\frac{\psi}{u_p-\sigma}\dot G^{ij}
        \nabla_i\!\left(\frac{u_p-\sigma}{\psi}\right)\nabla_jf
        +4f\dot G^{ij}\nabla_i\rho_B\nabla_j\rho_B\nonumber\\
        &-\frac{4f}{u_p-\sigma}\dot G^{ij}\nabla_i\rho_B\nabla_j u_p + 2f\mathcal{L}\rho_B\nonumber\\
        & + (n\alpha+1)\dfrac{fG}{u_p-\sigma} - \sigma\alpha\dfrac{fGH}{u_p-\sigma}.
        \label{eq:Lf-drift-alpha}
\end{align}
On the free boundary $\partial M\times[0,T_{\sigma}]$, the normal derivative satisfies
\begin{equation}
\label{eq:Tso-boundary-derivative}
\nabla_{\mu}f\leq -f.
\end{equation}
\end{lemma}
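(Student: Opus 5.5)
The evolution identity \eqref{eq:Lf-drift-alpha} comes from a product/quotient computation, and the boundary inequality \eqref{eq:Tso-boundary-derivative} from Lemma \ref{lem:bd-identities-alpha}.

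\textbf{Interior identity.} Write $f=\psi G/w$ with $w=u_p-\sigma$ and $\psi=e^{2\rho_B}$. For any product we have
\[
\cL(ab)=a\cL b+b\cL a-2\dot G^{ij}\nabla_ia\nabla_jb .
\]
The three factors contribute as follows.
\begin{itemize}
\item From \eqref{eq:evol-rho-alpha}, $\cL\psi=2\psi\cL\rho_B-4\psi\dot G^{ij}\nabla_i\rho_B\nabla_j\rho_B$.
\item From \eqref{eq:evol-G}, $\cL G=G\dot G^{ij}(h^2)_{ij}$.
\item From \eqref{eq:evol-u-alpha}, $\cL w=\dot G^{ij}(h^2)_{ij}(w+\sigma)-(n\alpha+1)G$, so that $\cL(1/w)=-\cL w/w^2-2\dot G^{ij}\nabla_iw\nabla_jw/w^3$.
\end{itemize}
Collecting the zero-order terms, the $\dot G^{ij}(h^2)_{ij}$ contributions from $G$ and from $1/w$ leave
\[
\frac{\psi G}{w}\,\dot G^{ij}(h^2)_{ij}\Big(1-\frac{w+\sigma}{w}\Big)
=-\sigma\alpha\frac{fGH}{w},
\]
using \eqref{eq:G-derivatives-diag}. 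The $-(n\alpha+1)G$ part gives $(n\alpha+1)fG/w$, and $\psi$ contributes $2f\cL\rho_B$.

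The gradient cross terms are to be rewritten through $\nabla f$. Use
\[
\nabla G=\frac{w}{\psi}\nabla f-G\nabla\!\Big(\frac{w}{\psi}\Big)\Big/\frac{w}{\psi},
\]
that is, express everything via $\nabla\log f=\nabla\log G+2\nabla\rho_B-\nabla\log w$. The cross terms
\[
-2\dot G(\nabla\log G,\nabla\log\psi)+2\dot G(\nabla\log G,\nabla\log w)-2\dot G(\nabla\log\psi,\nabla\log w)\cdot(-1)+\dots
\]
should then reorganize as
\[
2\frac{\psi}{w}\dot G\Big(\nabla\frac{w}{\psi},\nabla f\Big)
\]
plus residual terms purely in $\nabla\rho_B$ and $\nabla w$. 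I expect these residuals to be exactly $4f\dot G(\nabla\rho_B,\nabla\rho_B)-\frac{4f}{w}\dot G(\nabla\rho_B,\nabla u_p)$. The reason is that $\log f=\log G-\log(w/\psi)$, and the standard Tso computation gives the drift $2\dot G(\nabla\log(w/\psi),\nabla f)$ plus $f$ times the terms of $\cL\log(w/\psi)$. I will carry out this bookkeeping carefully; it is the only place where errors can creep in, but it is routine.

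\textbf{Boundary inequality.} On $\partial M$:
\begin{itemize}
\item $\nabla_\mu\log G=1$ by \eqref{eq:bd-G-alpha}.
\item $\nabla_\mu\log\psi=2\nabla_\mu\rho_B=-2$ by \eqref{eq:bd-rho-alpha}.
\item $\nabla_\mu u_p=h_{\mu\mu}(1-\ip{p}{X})\ge0$ by \eqref{eq:bd-up-alpha}, since $p\in\ol\B^{n+1}$ and $h_{\mu\mu}>0$.
\end{itemize}
Together with $w\ge\sigma>0$, this gives
\[
\nabla_\mu\log f=1-2-\frac{\nabla_\mu u_p}{w}\le-1,
\]
hence $\nabla_\mu f\le-f$ because $f>0$. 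This step is immediate.

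The main obstacle is purely the algebraic regrouping of the gradient terms into the stated drift form. The boundary estimate and the zero-order terms are direct consequences of earlier lemmas.
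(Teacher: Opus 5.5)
Your proposal follows the paper's proof: the interior identity comes from direct product-rule differentiation followed by substitution of \eqref{eq:evol-G} and \eqref{eq:evol-u-alpha}, and your zero-order bookkeeping is exactly right. The boundary inequality is the same computation the paper does, $\nabla_\mu\log f=1-2-\nabla_\mu u_p/(u_p-\sigma)\le-1$.

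The regrouping you defer closes as you expect. Insert $\nabla\log G=\nabla\log f-2\nabla\rho_B+\nabla\log w$ into your cross terms, and add $-4f\dot G^{ij}\nabla_i\rho_B\nabla_j\rho_B$ from $\cL\psi$ and $-2f\dot G^{ij}\nabla_i\log w\nabla_j\log w$ from $\cL(1/w)$. The quadratic terms in $\nabla\log w$ cancel, and what remains is exactly the drift plus $4f\dot G^{ij}\nabla_i\rho_B\nabla_j\rho_B-\frac{4f}{w}\dot G^{ij}\nabla_i\rho_B\nabla_ju_p$. Note one sign slip in your formula for $\nabla G$: it should read $\nabla G=\frac{w}{\psi}\nabla f+\frac{\psi G}{w}\nabla\big(\frac{w}{\psi}\big)$.
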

\begin{proof}
    On the free boundary, \eqref{eq:bd-G-alpha}, \eqref{eq:bd-up-alpha}, and
\eqref{eq:bd-rho-alpha} give
\begin{equation*}
        \frac{\nabla_\mu f}{f}
        =\frac{\nabla_\mu G}{G}
          -\frac{\nabla_\mu u_p}{u_p-\sigma}
          +2\nabla_\mu\rho_B=-1-\frac{\nabla_\mu u_p}{u_p-\sigma}.
\end{equation*}
Since $p\in\overline{\B}^{n+1}$, \eqref{eq:bd-up-alpha} gives
$\nabla_\mu u_p\ge0$, and \eqref{eq:Tso-boundary-derivative} follows.

It remains to compute the interior quotient identity.  Direct differentiation yields
\begin{align*}
    \mathcal{L} f=&2\frac{\psi}{u_p-\sigma}\dot G^{ij}
        \nabla_i\!\left(\frac{u_p-\sigma}{\psi}\right)\nabla_j f
        +\frac{f}{G}\mathcal{L} G-\frac{f}{u_p-\sigma} \mathcal{L} u_p
        \nonumber\\
        &  +2f \mathcal{L}\rho_B +4f\dot G^{ij}\nabla_i\rho_B\nabla_j\rho_B
        -\frac{4f}{u_p-\sigma}\dot G^{ij}\nabla_i\rho_B\nabla_j u_p.
\end{align*}
Substituting the evolution equations \eqref{eq:evol-G} and \eqref{eq:evol-u-alpha}, we obtain
\begin{equation*}
        \frac{f}{G}\mathcal{L} G-\frac{f}{u_p-\sigma}\mathcal{L} u_p
        =-\alpha\sigma\frac{f}{u_p-\sigma}G H
          +(n\alpha+1)\frac{f}{u_p-\sigma}G.
\end{equation*}
Combining the above identities gives \eqref{eq:Lf-drift-alpha}.
\end{proof}

We now derive upper bounds for the Gauss and principal curvatures when the convex body $\Omega_t$ bounded by $M_t$ and $\mathbb{S}^n$ contains a small ball. The exponential factor $\psi=\mathrm{e}^{2\rho_B}$ excludes a boundary maximum and preserves the sharp scaling needed in Lemma \ref{prop:F-upper-alpha} below.

\begin{lemma}[Boundary Tso estimate]\label{prop:Tso-alpha}
Let $M_t$ be a solution of \eqref{eq:flow} such that
\begin{equation*}
        u_p=\langle X-p,\nu \rangle \ge  2\sigma
\end{equation*}
for some constant $\sigma>0$ on $M\times [0,T_{\sigma}]$. Then
\begin{align}
        \max_{M\times[0,T_{\sigma}]}K^\alpha
        \leq & C\max\left\{
        \frac{1}{\sigma}\sup_MK^\alpha(\cdot,0),
        \frac{1}{\sigma^{n\alpha+1}}
        \right\}. \label{eq:G-upper-Tso-alpha}
\end{align}
Moreover, set $R_t=\sup_{M_t}|X(\cdot,t)-p|\leq 2$, then
\begin{align}
  \max\limits_{M_t}K^{\alpha}(\cdot,t)  \leq& C\max\left\{
        \frac{R_t}{\sigma}\sup_MK^\alpha(\cdot,0),
        \frac{R_t}{\sigma^{n\alpha+1}}
        \right\}. \label{equ-G-upper-time}
\end{align}
Here $C=C(n,\alpha,M_0)$.
\end{lemma}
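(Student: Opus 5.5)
Apply the maximum principle to the Tso quotient $f=\psi G/(u_p-\sigma)$ of Lemma~\ref{lem:Tso-quotient-alpha} on $M\times[0,T_\sigma]$. By \eqref{eq:Tso-boundary-derivative}, $\nabla_\mu f\le -f<0$ on $\partial M$, so a new maximum of $f$ cannot occur on the free boundary and must be interior. At an interior spatial maximum with $\mathcal Lf\ge0$, we have $\nabla f=0$, so the drift term in \eqref{eq:Lf-drift-alpha} vanishes. Lemma~\ref{lem:rho-est-alpha} then gives $4f\dot G^{ij}\nabla_i\rho_B\nabla_j\rho_B+2f\mathcal L\rho_B\le CfG$.

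The cross term $-\frac{4f}{u_p-\sigma}\dot G^{ij}\nabla_i\rho_B\nabla_ju_p$ needs more care. Using $\nabla_ju_p=h_j^k\langle X-p,e_k\rangle$ and $\dot G^{ij}=\alpha Gb^{ij}$, the product $\dot G^{ij}h_j^k=\alpha G\delta^{ik}$ is bounded by $CG$ because $|X-p|\le2$ and $|\nabla\rho_B|\le1$. So this term is at most $CfG/(u_p-\sigma)\le CfG/\sigma$. Since $u_p\le|X-p|\le2$ and $u_p-\sigma\ge\sigma$, we have $1\le 2/(u_p-\sigma)$, so the $CfG$ terms also fit into $CfG/(u_p-\sigma)$. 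Together:
\[
0\le \mathcal Lf\le \frac{fG}{u_p-\sigma}\Big(C-\sigma\alpha H\Big).
\]
Using $H\ge nK^{1/n}=nG^{1/(n\alpha)}$, at such a point $G^{1/(n\alpha)}\le C/\sigma$, so $G\le C\sigma^{-n\alpha}$. Since $\psi\in[1,e]$ and $u_p-\sigma\ge\sigma$, this gives $f\le C\sigma^{-n\alpha-1}$ there. Hence
\[
\max f\le\max\{\max f(\cdot,0),\,C\sigma^{-n\alpha-1}\},\qquad \max f(\cdot,0)\le e\sup K_0^\alpha/\sigma,
\]
and \eqref{eq:G-upper-Tso-alpha} follows from $G\le f(u_p-\sigma)\le 2f$.

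For \eqref{equ-G-upper-time}, bound $G$ at time $t$ by $f\,(u_p-\sigma)\le f\,|X-p|\le R_t\max f$ and insert the same bound for $\max f$. This gives exactly the factor $R_t$, which is the sharp scaling the paper says the factor $\psi$ is meant to preserve.

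The main obstacle is the gradient cross term. The convexity identity $\dot G^{ij}h_{jk}=\alpha G g_{ik}$ together with the lower curvature bound from Lemma~\ref{lem:kappa-lower-alpha} keeps it of order $G/(u_p-\sigma)$, so it is dominated by $\sigma\alpha GH/(u_p-\sigma)$ once $G$ is large. Constants must be tracked so that they depend only on $(n,\alpha,M_0)$.
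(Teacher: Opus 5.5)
Your proposal is correct and follows the paper's proof. You use the maximum principle for $f$, exclude a boundary maximum via $\nabla_\mu f\le -f$, and handle the cross term by the cancellation $\dot G^{ij}h_j^k=\alpha G\delta^{ik}$; the good term uses $H\ge nG^{1/(n\alpha)}$, and the final-slice bound $G\le R_t\sup f$ gives the second estimate. The only cosmetic difference is that you factor out $fG/(u_p-\sigma)$ and bound $G\le C\sigma^{-n\alpha}$ at the maximum point directly, whereas the paper rewrites the inequality as $0\le -c\sigma^{1+1/(n\alpha)}f^{2+1/(n\alpha)}+Cf^2$.
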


\begin{proof}
By Lemma \ref{lem:Tso-quotient-alpha}, $\nabla_\mu f\le -f<0$ on the free boundary.  Hence a positive space-time maximum of $f$
cannot occur on $\partial M$.

Fix $\tau\in(0,T_\sigma]$ and consider $f$ on
$M\times[0,\tau]$. Suppose that its maximum is attained at an
interior point $(x_0,t_0)$ with $t_0>0$. At this point the gradient
term involving $\nabla f$ in \eqref{eq:Lf-drift-alpha} vanishes and
$\mathcal L f\geq0$.

Since $X,p\in\ol\B^{n+1}$, we have
\begin{equation*}
    \sigma\leq u_p-\sigma\leq u_p
    \leq |X-p|\leq2.
\end{equation*}
Using
\begin{equation*}
    H\geq nK^{1/n}=nG^{1/(n\alpha)},
    \qquad
    G=\frac{f(u_p-\sigma)}{\psi},
\end{equation*}
together with $1\leq\psi\leq e$, we obtain
\begin{align*}
    -\alpha\sigma\frac{fGH}{u_p-\sigma}
    &\leq
    -n\alpha\sigma
    \frac{fG^{1+\frac{1}{n\alpha}}}{u_p-\sigma} \\
    &=
    -n\alpha\sigma
    \psi^{-1-\frac{1}{n\alpha}}
    (u_p-\sigma)^{\frac{1}{n\alpha}}
    f^{2+\frac{1}{n\alpha}} \\
    &\leq
    -c\sigma^{1+\frac{1}{n\alpha}}
    f^{2+\frac{1}{n\alpha}}.
\end{align*}
By Lemma \ref{lem:rho-est-alpha} and the identity
\begin{equation*}
    \frac{G}{u_p-\sigma}=\frac{f}{\psi}\leq f,
\end{equation*}
the remaining lower-order terms satisfy
\begin{equation*}
    4f\dot G^{ij}\nabla_i\rho_B\nabla_j\rho_B
    +2f|\mathcal L\rho_B|
    +(n\alpha+1)\frac{fG}{u_p-\sigma}
    \leq Cf^2.
\end{equation*}
Moreover, in a principal frame,
\begin{equation*}
    \nabla_i\rho_B=-\langle X,e_i\rangle,
    \qquad
    \nabla_i u_p=\kappa_i\langle X-p,e_i\rangle.
\end{equation*}
It follows that
\begin{align*}
    \frac{4f}{u_p-\sigma}
    \left|
    \dot G^{ij}\nabla_i\rho_B\nabla_j u_p
    \right|
    &=
    \frac{4\alpha fG}{u_p-\sigma}
    \left|
    \sum_{i=1}^n
    \langle X,e_i\rangle
    \langle X-p,e_i\rangle
    \right| \\
    &\leq
    C\frac{fG}{u_p-\sigma}
    \leq Cf^2.
\end{align*}
Substituting these estimates into \eqref{eq:Lf-drift-alpha} at
$(x_0,t_0)$ gives
\begin{equation*}
    0\leq
    -c\sigma^{1+\frac{1}{n\alpha}}
    f^{2+\frac{1}{n\alpha}}
    +Cf^2.
\end{equation*}
Consequently,
\begin{equation*}
    f(x_0,t_0)\leq C\sigma^{-(n\alpha+1)}.
\end{equation*}

If the space-time maximum is attained at the initial time, then
\begin{equation*}
    f(\cdot,0)
    =\psi\frac{G(\cdot,0)}{u_p(\cdot,0)-\sigma}
    \leq
    \frac{e}{\sigma}\sup_M K^\alpha(\cdot,0).
\end{equation*}
We therefore have
\begin{equation*}
    \sup_{M\times[0,\tau]}f
    \leq
    C\max\left\{
    \frac{1}{\sigma}\sup_M K^\alpha(\cdot,0),
    \frac{1}{\sigma^{n\alpha+1}}
    \right\}.
\end{equation*}

Since $u_p-\sigma\leq2$ and $\psi\geq1$, it follows that
\begin{equation*}
    \sup_{M\times[0,\tau]}K^\alpha
    \leq
    C\max\left\{
    \frac{1}{\sigma}\sup_M K^\alpha(\cdot,0),
    \frac{1}{\sigma^{n\alpha+1}}
    \right\}.
\end{equation*}
Taking $\tau=T_\sigma$ proves \eqref{eq:G-upper-Tso-alpha}.

Finally, at a fixed time $t\in[0,T_\sigma]$, we use only on the
final time slice that
\begin{equation*}
    u_p-\sigma\leq u_p
    \leq |X-p|\leq R_t.
\end{equation*}
Hence
\begin{align*}
    \sup_{M_t}K^\alpha
    &=
    \sup_{M_t}\frac{f(u_p-\sigma)}{\psi} \\
    &\leq
    R_t\sup_{M\times[0,t]}f \\
    &\leq
    C\max\left\{
    \frac{R_t}{\sigma}\sup_M K^\alpha(\cdot,0),
    \frac{R_t}{\sigma^{n\alpha+1}}
    \right\}.
\end{align*}
This proves \eqref{equ-G-upper-time}.
\end{proof}

\subsection{Contraction to one point}
Let $r_-(t)$ and $r_+(t)$ be respectively the Euclidean inradius and
circumradius of $\Omega_t$, and set
\begin{equation*}
        \Omega_\infty = \bigcap_{0\le t<T}\Omega_t.
\end{equation*}
The curvature lower bound yields the following comparison between the two radii.

\begin{lemma}\label{lem:radius-comparison-alpha}
There is $C=C(n,M_0,\alpha)$ such that
\begin{equation}\label{eq:radius-comparison-alpha}
        r_+(t)^2\le Cr_-(t)
\end{equation}
for every $t<T$.
\end{lemma}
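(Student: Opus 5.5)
We plan to follow the Chou--Wang argument \cite[Lem 2.2]{CW00}, with the principal curvature lower bound of Lemma \ref{lem:kappa-lower-alpha} as the only flow input. The statement is then purely static: a convex body $\Omega_t\subset\ol\B^{n+1}$ whose curved boundary part $M_t$ has all principal curvatures at least $c_1$ satisfies $r_+^2\le Cr_-$. The idea is that a uniform lower curvature bound forces $\Omega_t$ to look like a ball in every direction. Consequently, a long chord cannot coexist with a small width.

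\textbf{Step 1 (width versus inradius).} Let $w(t)$ be the minimal width of $\Omega_t$. By Steinhagen's inequality $w\le C_n r_-$ for convex bodies in $\R^{n+1}$, so it suffices to prove $r_+^2\le Cw$. Let $e$ be a direction realizing $w$, and let $P_1,P_2$ be the two parallel supporting hyperplanes orthogonal to $e$ at distance $w$ apart.

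\textbf{Step 2 (long chord forces width).} Pick $x,y\in\Omega_t$ with $|x-y|\ge r_+$; this is possible since $\diam\Omega_t\ge r_+$. Consider the two-dimensional section of $\Omega_t$ by the plane through $x,y$ containing $e$; if $e$ is parallel to $y-x$, any plane through $x,y$ will do. We shall need the boundary of this section to be a planar convex curve whose curvature is at least $c_1$ wherever it lies on $M_t$. This holds because normal sections of $M_t$ have curvature at least $c_1$, and oblique sections have curvature $\kappa_n/\cos\theta\ge c_1$ by Meusnier's theorem. Where the curve instead lies on $\Sph^n$, the section is an arc of a circle of radius at most $1$, so its curvature is also at least $1$. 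Hence the whole section boundary has curvature at least $\min\{c_1,1\}$. A standard Blaschke rolling argument then shows that a planar convex curve with curvature bounded below by $c$ lies inside a disk of radius $1/c$ through any of its points; near corners, where the curve passes between $M_t$ and $\Sph^n$, we use convexity so that no curvature is lost there. It follows that the section is contained in the intersection of such disks along the chord $[x,y]$.

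\textbf{Step 3 (the sagitta bound).} A curve of curvature at least $c$ joining points at distance $r_+$ deviates from the chord by at least the sagitta $\approx c\,r_+^2/8$, and it does so on both sides of the section. Projecting the section onto $e$ therefore shows that its extent in direction $e$ is at least $c'r_+^2$; in the degenerate case where $e$ is parallel to $y-x$, the width is at least $r_+\ge r_+^2/2$ directly. Combining this with Step 1 gives $r_+^2\le C w\le Cr_-$.

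\textbf{Main obstacle.} The delicate point is Step 2: the boundary of $\Omega_t$ is not smooth, being made of the strictly convex part $M_t$ and a piece of $\Sph^n$ meeting orthogonally. We must ensure the curvature comparison survives across the corner curve $\partial M_t$. We would handle this by a support-function formulation instead. The support function $u$ of $\Omega_t$ satisfies, in the sense of convex bodies, that all radii of curvature are at most $\max\{1/c_1,1\}$, since both boundary pieces have curvature at least $\min\{c_1,1\}$ and corners only decrease radii. Blaschke's theorem then gives that $\Omega_t$ slides freely inside a ball of radius $R_0=\max\{1/c_1,1\}$. The section argument above is then clean: $\Omega_t$ is an intersection of balls of radius $R_0$, and the width bound follows from elementary geometry of such ``$R_0$-convex'' sets.
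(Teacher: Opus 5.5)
Your argument is correct, but it takes a different route from the paper.

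\textbf{The paper's route.} The paper uses Chou--Wang's Lemma~2.2 as a black box for \emph{smooth} bodies. It rounds the edge $\partial M_t$ with a regularized maximum of the two defining functions. Using orthogonality of the two normals, it checks that the rounded bodies $\Omega_{t,\varepsilon}$ keep a uniform lower curvature bound $c_*>0$. It then applies the lemma to $\Omega_{t,\varepsilon}$ and lets $\varepsilon\to0$, using Hausdorff continuity of $r_\pm$.

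\textbf{Your route.} You prove the inequality directly for the piecewise smooth body. Convex corners only add curvature, so $\Omega_t$ is $R_0$-convex with $R_0=\max\{1/c_1,1\}$, i.e.\ an intersection of balls of radius $R_0$. A spindle/sagitta estimate together with Steinhagen's inequality then gives $r_+^2\le Cw\le Cr_-$. Your version avoids the corner-rounding construction, which is the least detailed part of the paper's proof, and it gives explicit constants. It also needs only convexity of the corner, not orthogonality. The paper's version is shorter, given the cited lemma.

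\textbf{Points to tighten.}
\begin{itemize}
\item Justify ``corners only decrease radii.'' At an edge point, the outer normals are $\cos\theta\,\nu+\sin\theta\,X$ with $\theta\in[0,\pi/2]$. In that normal direction, the second fundamental form of $\partial M_t$ is $\cos\theta\,h+\sin\theta\,g\ge(\cos\theta+\sin\theta)\min\{c_1,1\}\ge\min\{c_1,1\}$ on $T\partial M_t$. In general one has $a+b\ge|a\nu_1+b\nu_2|=1$. In your planar-section version this issue disappears, because a convex corner of a plane curve is just a positive point mass of turning.
\item The sagitta on each side of the chord does not by itself bound the projection onto an oblique direction $e$. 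Use instead that an $R_0$-convex set containing $x,y$ contains the whole spindle of $x,y$. This spindle contains the ball of radius $R_0-\sqrt{R_0^2-|x-y|^2/4}\ge |x-y|^2/(8R_0)$ about the midpoint; note $|x-y|\le2\le 2R_0$. In the planar section this bounds the extent in every in-plane direction at once.
\item For the full body, that same ball gives $r_-\ge r_+^2/(8R_0)$ directly, so Steinhagen's inequality is not actually needed.
\end{itemize}
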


\begin{proof}
Set $ c_0=\min\{c_1,1\}$.  By Lemma \ref{lem:kappa-lower-alpha}, the curved part of $\partial\Omega_t$ has all principal curvatures
bounded below by $c_1$, while the spherical face $\partial\Omega_t\cap\mathbb S^n$ has all principal
curvatures equal to one. Since the two faces meet orthogonally along their common
boundary, the corner may be rounded while preserving a uniform
positive lower bound for the principal curvatures. We briefly
recall the standard construction. Near the edge, choose smooth
defining functions $r_1$ and $r_2$ for the curved and spherical
faces, respectively, normalized so that
\begin{equation*}
    |\nabla r_i|=1,
    \qquad
    \Omega_t=\{r_1\leq0,\ r_2\leq0\}.
\end{equation*}
Replace $\max\{r_1,r_2\}$ by a smooth convex regularized maximum
$m_\varepsilon(r_1,r_2)$. Away from an $\varepsilon$-neighborhood
of the edge, the resulting boundary agrees with the original
faces. The Hessian of $m_\varepsilon(r_1,r_2)$ is the sum of a convex
combination of the Hessians of $r_1$ and $r_2$ and a
nonnegative quadratic form involving
$\nabla r_1-\nabla r_2$. The first term preserves the curvature
lower bound in directions tangent to the edge, while the second
term controls the rounding direction. Since the two normals are
orthogonal along the edge, this control is uniform. Consequently,
after patching the construction with the original faces, we
obtain smooth closed convex bodies $\Omega_{t,\varepsilon}$ such
that $ \Omega_{t,\varepsilon}\rightarrow\Omega_t$ in Hausdorff sense and 
\begin{equation*}
    \kappa_i(\partial\Omega_{t,\varepsilon})\geq c_*
\end{equation*}
for a constant $c_*>0$ depending only on $n$ and
$c_0=\min\{c_1,1\}$, and independent of $t$ and $\varepsilon$.  Applying Chou--Wang \cite[Lemma~2.2]{CW00} to
$\Omega_{t,\varepsilon}$ gives
\begin{equation*}
    r_{+,\varepsilon}^2\leq C r_{-,\varepsilon},
\end{equation*}
where $C$ depends only on $n$ and the lower curvature bound. Since the inradius and circumradius are continuous under
Hausdorff convergence of compact convex bodies, letting
$\varepsilon\downarrow0$ proves the assertion \eqref{eq:radius-comparison-alpha}.
\end{proof}

\begin{lemma}\label{prop:finite-extinction-alpha}
For every $\alpha>0$, the maximal smooth strictly convex solution of
\eqref{eq:flow} has finite maximal time $T<\infty$. Moreover,
\begin{equation*}
        \lim_{t\nearrow T}r_-(t)
        =\lim_{t\nearrow T}r_+(t)=0,
        \qquad \Omega_\infty=\{p\}
\end{equation*}
for a point $p\in\Sph^n$.  Consequently $\Omega_t$ and $M_t$ converge to $p$ in Hausdorff distance.
\end{lemma}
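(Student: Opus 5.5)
Finiteness of $T$ is already given by Lemma \ref{lem:K-lower-alpha}, so the real content is that $r_-(t)\to0$ and that the limit is a single point on $\Sph^n$. The plan is to argue by contradiction on the inradius. The bodies $\Omega_t$ are nested: the speed $K^\alpha>0$ moves $M_t$ inward and the support sphere is fixed. Hence $r_-(t)$ is nonincreasing, and $\Omega_\infty$ is a nonempty compact convex set, being a decreasing intersection of compact convex sets.

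Suppose $\lim_{t\nearrow T}r_-(t)=2\sigma_0>0$. Since $\Omega_\infty=\bigcap_t\Omega_t$ and the sets are nested, the inradius of $\Omega_\infty$ is $2\sigma_0$. So there is a ball $B_{2\sigma}(p)\subset\Omega_\infty\subset\Omega_t$ for all $t<T$, with $\sigma=\sigma_0/2$ say and $p\in\B^{n+1}$. Then $u_p\ge2\sigma$ on all of $[0,T)$, and Lemma \ref{prop:Tso-alpha} gives a uniform upper bound for $K^\alpha$ on $M\times[0,T)$. Combined with the lower bound $\kappa_i\ge c_1$ of Lemma \ref{lem:kappa-lower-alpha}, this gives $\kappa_i\le K/c_1^{\,n-1}\le C$. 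Thus the principal curvatures are uniformly pinched in $[c_1,C]$. The equation is then uniformly parabolic, and $K^\alpha=\exp(\alpha\log K)$ is concave in $h$ after passing to an appropriate concave function of the curvature (e.g.\ $\log K$ or $K^{1/n}$). The free-boundary condition is a Neumann condition in Stahl's coordinates. So Krylov--Safonov/Evans--Krylov type estimates for oblique boundary problems give uniform $C^{2,\beta}$ bounds and then all higher derivatives. Hence $M_t$ converges smoothly as $t\to T$ to a strictly convex free-boundary hypersurface, and short-time existence restarts the flow, contradicting maximality of $T$. Therefore $r_-(t)\to0$.

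Lemma \ref{lem:radius-comparison-alpha} now gives $r_+(t)^2\le Cr_-(t)\to0$, so $\diam\Omega_t\to0$. Consequently $\Omega_\infty$ is a nonempty set of diameter zero, i.e.\ a point $\{p\}$, and $\Omega_t\to\{p\}$ in Hausdorff distance. Since $M_t\subset\Omega_t$, the hypersurfaces $M_t$ also converge to $p$ in Hausdorff distance.

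It remains to see that $p\in\Sph^n$. Each $\Omega_t$ contains the nonempty spherical face $\partial\Omega_t\cap\Sph^n$ (equivalently $\partial M_t\ne\emptyset$, because $\partial M\neq\emptyset$ and $X(\partial M,t)\subset\Sph^n$). Pick $q_t\in\partial M_t\subset\Sph^n$. Then $|q_t-p|\le\diam\Omega_t\to0$, and since $\Sph^n$ is closed, $p\in\Sph^n$.

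The main obstacle is the extension step. It requires higher regularity up to the boundary for the fully nonlinear Neumann problem from only two-sided curvature bounds. Since $K^\alpha$ is not concave in $h$ for all $\alpha$, I would first rewrite the equation with the concave operator $\log K$ (the speed is a smooth monotone function of it) and appeal to the Lieberman--Trudinger oblique-derivative version of Evans--Krylov. Alternatively, I would use the graphical parametrization in Stahl's coordinates, in which $\partial_t$ of the height function equals a concave uniformly elliptic operator with a homogeneous Neumann condition.
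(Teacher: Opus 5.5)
Your proposal follows the paper's proof of this lemma essentially step for step. The steps are: $T<\infty$ from Lemma \ref{lem:K-lower-alpha}; nestedness; a fixed ball $B_{2\sigma}\subset\Omega_t$ if $r_-\not\to0$ (your inradius-of-$\Omega_\infty$ argument is a precise version of the paper's appeal to compactness); two-sided curvature bounds from Lemmas \ref{prop:Tso-alpha} and \ref{lem:kappa-lower-alpha}; continuation past $T$; and finally Lemma \ref{lem:radius-comparison-alpha} together with a point of $\partial M_t\subset\Sph^n$ to place $p$ on the sphere.

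The one thing to correct is your justification of regularity in the continuation step, which the paper only cites as standard. For $n\alpha>1$, neither of your proposed remedies gives a concave parabolic problem.

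\begin{itemize}
\item \textbf{Why the graph/Stahl form fails.} In a graph (or Stahl) parametrization the equation has the form $\partial_tu=F(r)$ with $F(r)=c\,\det(r)^{\alpha}$. Here $r=\mathcal A[u]$ is affine in $D^2u$, and $c=c(x,u,Du)$. The map $r\mapsto\det(r)^\alpha$ is then neither concave nor convex: along $r=\lambda\Id$ it equals $\lambda^{n\alpha}$, while along off-diagonal directions it is concave.
\item \textbf{Why reparametrizing does not help.} The parabolic Evans--Krylov theorem needs one of the sets $\{(r,p):p\le F(r)\}$ or $\{(r,p):p\ge F(r)\}$ to be convex. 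These sets are unchanged by monotone reparametrizations of both sides, such as $\alpha^{-1}\log(\partial_tu/c)=\log\det\mathcal A[u]$, and neither is convex when $n\alpha>1$. So $\log K$ or $K^{1/n}$ is concave in $D^2u$ only as an elliptic operator, not jointly in $(D^2u,\partial_tu)$. For $n\alpha\le1$ your remark is fine.
\end{itemize}

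A standard repair, once the principal curvatures are pinched, runs as follows.
\begin{itemize}
\item Apply Krylov--Safonov, in its oblique-boundary version (Lieberman), to the linear uniformly parabolic problem $\mathcal L G=\alpha K^{2\alpha}H$ with $\nabla_\mu G=G$ on the free boundary. This gives $G\in C^{\beta}$ in space-time.
\item On each time slice, treat $\log K=\alpha^{-1}\log G$ as a concave, uniformly elliptic oblique problem with H\"older right-hand side (Lieberman--Trudinger). This gives uniform spatial $C^{2,\beta}$ bounds.
\item Interpolate to get H\"older continuity in time, then bootstrap with linear parabolic Schauder theory.
\end{itemize}
Concavity for every $\alpha$ does hold in the support-function variable, where the speed is $-\det(B)^{-\alpha}$; the paper uses this in Corollary \ref{cor:higher-regularity-alpha}. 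However, before the Cayley normalization the Gauss image of $M_t$ is not a fixed domain, so that route is less convenient here.
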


\begin{proof}
The upper bound for the maximal time follows from Lemma
\ref{lem:K-lower-alpha}.  The normal velocity points into the convex body and the support sphere is fixed, so avoidance gives
\begin{equation*}
        \Omega_{t_2}\subset\Omega_{t_1},
        \qquad 0\le t_1\le t_2<T.
\end{equation*}
Thus $r_-(t)$ is nonincreasing and $\Omega_\infty$ is a nonempty compact convex set.

Suppose $\lim_{t\nearrow T}r_-(t)=r_*>0$.  Choose $0<\sigma<r_*/4$.  By compactness of the nested bodies, there is a fixed ball $B_{2\sigma}(p_0)\subset\Omega_t$ for all $t<T$.  Lemma \ref{prop:Tso-alpha} gives a uniform upper bound for $K$ on $M\times[0,T)$. Together with Lemma \ref{lem:kappa-lower-alpha}, this gives two-sided bounds for all principal curvatures. Consequently,
the scalar equation obtained from the generalized Gaussian
coordinates of Stahl \cite[Section~2]{StahlRegularity} remains
uniformly parabolic, while the free-boundary condition remains
uniformly oblique. Standard
continuation estimates for strictly convex curvature flows with
oblique boundary conditions then give uniform higher-order
estimates on terminal time strips. Equivalently, one may combine
Stahl's boundary coordinate reduction with the usual local
regularity and continuation argument for the uniformly parabolic
$K^\alpha$ equation.  It follows
that $M_t$ converges smoothly, as $t\uparrow T$, to a smooth
strictly convex hypersurface satisfying the same free-boundary
condition. Applying the short-time existence result with this
limiting hypersurface as initial data extends the solution past
$T$. This is the fully nonlinear counterpart of the continuation
argument in \cite[Section~6.4]{StahlRegularity}, and gives a
contradiction. Therefore
\begin{equation*}
        r_-(t)\longrightarrow0.
\end{equation*}
The radius comparison \eqref{eq:radius-comparison-alpha} now yields
$r_+(t)\to0$. Hence the diameter of $\Omega_t$ tends to zero, and the nested intersection consists of one point, denoted by $p$.

The free boundary is nonempty and lies on the fixed support sphere.  Choose $t_j\nearrow T$ and $y_j\in X(\partial M,t_j)$. Since
$\diam\Omega_{t_j}\to 0$, $y_j\to p$. As $y_j\in\mathbb{S}^n$ and the sphere is closed, $p\in\mathbb{S}^n$. Finally, nested compact sets with singleton intersection converge to that singleton in Hausdorff distance. The same is true for their curved boundary pieces $M_t$.
\end{proof}

\section{The normalized half-space flow}
\label{sec:normalization}
To study the asymptotics near the extinction point, we perform two coordinate changes. First, a conformal map sends the unit ball to the Euclidean half-space, transforming the spherical free boundary into a flat Neumann boundary. The resulting flow has a modified speed involving the conformal factor. Second, we rescale the hypersurfaces to normalize the half-space volume, which leads to a flow for the support function on the hemisphere.

\subsection{Conformal half-space coordinates}
\label{sec:conformal}

After a rotation we assume that the extinction point is
\begin{equation*}
        p=-E,
        \qquad E=e_{n+1}.
\end{equation*}
Let
\begin{equation*}
        \HH=\{z=(z',z_{n+1})\in\R^{n+1}:z_{n+1}\ge0\},
        \qquad
        \PiH=\bd\HH=\{z_{n+1}=0\}.
\end{equation*}
We use the Cayley-type conformal map
\begin{equation}\label{eq:cayley-alpha}
        F(z)=\left(\frac{2z'}{|z'|^2+(1+z_{n+1})^2},
        \frac{|z|^2-1}{|z'|^2+(1+z_{n+1})^2}\right),
\end{equation}
which was used in \cite{WX22} to study a locally constrained mean curvature flow with free boundary.  It maps the half-space $\HH$ conformally onto $\ol\B^{n+1}\setminus\{E\}$, sends $0$ to $-E$, and maps $\PiH$ onto $\Sph^n\setminus\{E\}$.  Its conformal factor is
\begin{equation}\label{eq:conformal-factor-alpha}
        F^*\delta_{\B}=e^{2w}\delta_{\HH},
        \qquad
        e^w=\frac2{\Lambda(z)},
        \qquad
        \Lambda(z)=|E+z|^2=1+2z_{n+1}+|z|^2.
\end{equation}
The inverse is
\begin{equation*}
        F^{-1}(x)=\left(\frac{2x'}{|x'|^2+(1-x_{n+1})^2},
        \frac{1-|x|^2}{|x'|^2+(1-x_{n+1})^2}\right).
\end{equation*}

Let $Z=F^{-1}\circ X$ and $\Sigma_t=Z(M,t)$.  For $t$ sufficiently close to $T$, the hypersurface $\Sigma_t$ is contained in a small neighborhood of the origin.

\begin{lemma}\label{lem:flat-boundary-alpha}
The free-boundary conditions in the ball become
\begin{equation}\label{eq:flat-bd-alpha}
        Z(\bd M,t)\subset\PiH,
        \qquad
        \ip{N}{E}=0\quad\hbox{on }\bd M,
\end{equation}
where $N$ is the Euclidean unit normal of $\Sigma_t$.
\end{lemma}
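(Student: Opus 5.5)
Both assertions of Lemma~\ref{lem:flat-boundary-alpha} follow from two properties of the Cayley-type map \eqref{eq:cayley-alpha}: $F$ maps $\PiH$ onto $\Sph^n\setminus\{E\}$, and $F$ is conformal, so it preserves angles between hypersurfaces. The plan has three steps.

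\emph{Step 1: the boundary stays on the wall.} Since $X(\bd M,t)\subset\Sph^n\setminus\{E\}$, we get $Z(\bd M,t)=F^{-1}(X(\bd M,t))\subset F^{-1}(\Sph^n)=\PiH$. The point $E$ is never reached: $M_t$ converges to $-E$, and for $t$ close to $T$ the hypersurface $M_t$ lies in a small neighborhood of $-E$. One can also confirm the inclusion directly from the formula for $F^{-1}$. Its last component is $(1-|x|^2)/(|x'|^2+(1-x_{n+1})^2)$, which vanishes exactly when $|x|=1$ and $x\ne E$.

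\emph{Step 2: conformality transports the orthogonality.} Let $\nu$ be the normal of $M_t$ at a boundary point $X=F(Z)$. The differential $dF_Z=e^{w}\mathcal O$ with $\mathcal O$ orthogonal, so $dF_Z$ maps $T_Z\Sigma_t$ onto $T_XM_t$ and $T_Z\PiH$ onto $T_X\Sph^n$. It also maps the normal $N$ of $\Sigma_t$ to a multiple of $\nu$, namely $dF_Z(N)=e^{w}\nu$ up to sign, and maps the normal of $\PiH$ to a multiple of the normal of the sphere. The normal of $\PiH$ is $\pm E$, and the normal of the sphere is $\mu=X$. Since $dF_Z$ is a scalar multiple of an orthogonal map, it preserves inner products up to the positive factor $e^{2w}$. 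Therefore
\begin{equation*}
\ip{N}{E}=\pm e^{-2w}\ip{dF_Z N}{dF_Z E}=\pm e^{-2w}\cdot e^{w}\,c\,\ip{\nu}{X}=0,
\end{equation*}
where $c\ne0$ is the factor in $dF_Z(E)=c\,X$. The last equality holds because $\ip{\nu}{\mu}=0$ by \eqref{eq:flow}.

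\emph{Step 3: identify $dF_Z(E)$.} The only computation to check is that $dF_Z(E)$ is indeed parallel to $X$ at points of $\PiH$. This follows without computation: $E$ is normal to $\PiH$, $F(\PiH)\subset\Sph^n$, and a conformal map sends normals of a hypersurface to normals of its image. I would still verify it on the explicit formula at $z_{n+1}=0$ as a sanity check, using $e^{w}=2/\Lambda$ from \eqref{eq:conformal-factor-alpha}.

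I do not expect a genuine obstacle. The only points needing care are the sign and orientation conventions for $N$, and these do not affect the vanishing of $\ip{N}{E}$.
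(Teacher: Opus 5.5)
Your proposal is correct and is essentially the paper's argument: $F$ maps $\PiH$ onto $\Sph^n\setminus\{E\}$, and because $dF_Z=e^{w}Q(Z)$ with $Q(Z)\in O(n+1)$ preserves orthogonality, the condition $\ip{\nu}{X}=0$ becomes $\ip{N}{E}=0$. You simply spell out the linear algebra that the paper's two-line proof leaves implicit; one small point is that the sign belongs in $dF_Z(N)=\pm e^{w}\nu$, not in the exact identity $\ip{N}{E}=e^{-2w}\ip{dF_Z(N)}{dF_Z(E)}$, which is harmless.
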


\begin{proof}
The first assertion follows from $F(\PiH)=\Sph^n\setminus\{E\}$.  Because $F$ is conformal, orthogonality to the support sphere is transformed into orthogonality to the flat support plane.  The normal to $\PiH$ is $E$, which proves \eqref{eq:flat-bd-alpha}.
\end{proof}

Let $A_Z$ be the Euclidean Weingarten map of $\Sigma_t\subset\mathbb{H}$, and let $\bar A$ denote the Weingarten map computed in the conformal metric $\bar g=e^{2w}\delta$.  We have the following transformation formula.
\begin{lemma}
If $\bar N=e^{-w}N$ is the $\bar{g}$-unit normal, then
\begin{equation}\label{eq:conformal-A-alpha}
        \bar A=e^{-w}\big(A_Z+N(w)\Id\big).
\end{equation}
Taking determinants gives
\begin{equation}\label{eq:conformal-K-alpha}
        \bar K=e^{-nw}\det\big(A_Z+N(w)\Id\big).
\end{equation}
The conformal factor also satisfies
\begin{equation}\label{eq:Nw-alpha}
        N(w)=-\frac{2\ip{z+E}{N}}{\Lambda(z)}.
\end{equation}
\end{lemma}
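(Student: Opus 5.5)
The plan is to use the standard transformation law for the second fundamental form under a conformal change of the ambient metric $\bar g=e^{2w}\delta$, and then specialize to the Cayley factor. First record the Levi-Civita connection of $\bar g$:
\begin{equation*}
\bar D_XY=D_XY+X(w)Y+Y(w)X-\ip{X}{Y}Dw .
\end{equation*}
The $\bar g$-unit normal is $\bar N=e^{-w}N$, since $\bar g(\bar N,\bar N)=e^{2w}e^{-2w}=1$.

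For tangent fields $X,Y$ to $\Sigma_t$, I compute the second fundamental form with the same sign convention as in the ball, $\bar h(X,Y)=\bar g(\bar D_X\bar N,Y)$. Expanding,
\begin{equation*}
\bar D_X(e^{-w}N)=e^{-w}\big(D_XN - X(w)N + X(w)N+N(w)X-\ip{X}{N}Dw\big).
\end{equation*}
The two $X(w)N$ terms cancel, and $\ip{X}{N}=0$. Hence $\bar D_X\bar N=e^{-w}(D_XN+N(w)X)$, which is tangent to $\Sigma_t$.

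Then
\begin{equation*}
\bar h(X,Y)=e^{2w}e^{-w}\big(h_Z(X,Y)+N(w)\ip{X}{Y}\big)=e^{w}\big(h_Z+N(w)\delta\big)(X,Y).
\end{equation*}
Raising the index with the induced metric $\bar g|_{\Sigma}=e^{2w}\delta|_\Sigma$ gives
\begin{equation*}
\bar A=e^{-2w}e^{w}\big(A_Z+N(w)\Id\big)=e^{-w}\big(A_Z+N(w)\Id\big),
\end{equation*}
which is \eqref{eq:conformal-A-alpha}. Taking the determinant of this $n\times n$ endomorphism pulls out $e^{-nw}$ and yields \eqref{eq:conformal-K-alpha}.

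For \eqref{eq:Nw-alpha}, use $w=\log 2-\log\Lambda$ with $\Lambda(z)=|E+z|^2$. Then $Dw=-D\Lambda/\Lambda=-2(z+E)/\Lambda$, so $N(w)=\ip{Dw}{N}=-2\ip{z+E}{N}/\Lambda(z)$.

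The only real obstacle is bookkeeping of conventions. One must check that the orientation of $N$ matches the outward normal $\nu$ under $F$; this holds because $F$ is orientation compatible and conformal, so $dF(N)$ is a positive multiple of $\nu$. One must also check that the sign convention $h=\ip{D\nu}{\cdot}$ is used consistently, so that the $+N(w)\Id$ sign is correct. I would verify the sign on a sanity case: a hemisphere $|z|=r$ centered at the origin maps to a spherical cap in the ball with positive curvature, and the formula must give $\bar A$ positive there. In particular this requires that $\det(A_Z+N(w)\Id)$ agree with $\bar K$ and not differ from it by a sign.
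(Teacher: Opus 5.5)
Your proof is correct and follows the paper's argument: the conformal connection formula for $\bar g=e^{2w}\delta$ gives $\bar h=e^{w}(h_Z+N(w)\delta)$, raising an index with $e^{2w}\delta$ and taking determinants gives the two curvature identities, and $N(w)$ follows from $w=\log 2-\log\Lambda$ with $D\Lambda=2(z+E)$. The only differences are minor. You differentiate $\bar N$ via the Weingarten equation, whereas the paper pairs $\bar D_UV$ with $\bar N$; the two are equivalent. Your orientation remark is not needed for the lemma as stated, and $dF(\bar N)=\nu$ actually holds because $F$ maps the enclosed half-body onto $\Omega_t$, not because $F$ preserves orientation.
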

\begin{proof}
For a conformal change $\bar g=e^{2w}\delta$, the Levi-Civita connections satisfy
\begin{equation*}
        \bar D_UV=D_UV+U(w)V+V(w)U-\langle U,V\rangle D w.
\end{equation*}
Taking the $\bar g$-inner product with $\bar N=e^{-w}N$ yields
\begin{equation*}
        \bar h_{ij}=e^w\left(h_{ij}+N(w)g_{ij}\right).
\end{equation*}
Since $\bar g_{ij}=e^{2w}\delta_{ij}$, raising one index gives \eqref{eq:conformal-A-alpha}, and taking determinants gives \eqref{eq:conformal-K-alpha}. Finally, $w=\log2-\log \Lambda$ and $D \Lambda=2(z+E)$, which also yields \eqref{eq:Nw-alpha}.
\end{proof}

\begin{proposition}[Pulled-back flow]\label{prop:pulled-back-alpha}
Up to tangential reparametrization, the hypersurfaces $\Sigma_t$ satisfy
\begin{equation}\label{s4.flow-pb}
        \partial_tZ=-\Phi_\alpha(Z,N,A_Z)N,
\end{equation}
with flat free boundary \eqref{eq:flat-bd-alpha}, where
\begin{align}
        \Phi_\alpha(Z,N,A_Z)=&e^{-(n\alpha+1)w}
        \det\big(A_Z+N(w)\Id\big)^\alpha.\label{eq:Phi-alpha-explicit}
\end{align}
\end{proposition}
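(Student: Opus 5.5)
We use only that $F$ is an isometry from $(\HH,\bar g)$, $\bar g=e^{2w}\delta$, onto $(\ol\B^{n+1}\setminus\{E\},\delta_{\B})$. Consequently the flow \eqref{eq:flow} of $M_t$ in the ball is the same as the flow of $\Sigma_t$ in $(\HH,\bar g)$ by its $\bar g$-Gauss curvature:
\begin{equation*}
\partial_t Z=-\bar K^\alpha\bar N,
\end{equation*}
where $\bar N$ is the $\bar g$-unit normal. This statement is understood up to tangential terms, and the orientations are chosen so that $dF(\bar N)=\nu$.

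First, we differentiate $X=F\circ Z$ in time to get $dF(\partial_tZ)=\partial_tX=-K^\alpha\nu$. Since $dF$ is a $\bar g$-isometry, $dF(\bar N)$ is a unit normal to $M_t$. It points out of $\Omega_t$ because $F$ maps the region enclosed by $\Sigma_t$ and $\PiH$ onto $\Omega_t\setminus\{E\}$ and preserves orientation, so we fix $\bar N$ with $dF(\bar N)=\nu$. The Gauss curvature is an isometry invariant, so $K\circ F=\bar K$ at corresponding points. Hence $\partial_tZ=-\bar K^\alpha\bar N$ exactly, with no tangential component in this parametrization; the phrase ``up to tangential reparametrization'' only matters if one later prefers a different parametrization of $\Sigma_t$.

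Second, we express everything in Euclidean terms. We write $\bar N=e^{-w}N$ with $N$ the Euclidean unit normal, and we use \eqref{eq:conformal-K-alpha}, which gives $\bar K=e^{-nw}\det(A_Z+N(w)\Id)$. Substituting,
\begin{equation*}
\partial_tZ=-e^{-n\alpha w}\det(A_Z+N(w)\Id)^\alpha e^{-w}N=-\Phi_\alpha N,
\end{equation*}
with $\Phi_\alpha$ exactly as in \eqref{eq:Phi-alpha-explicit}. Convexity of $M_t$ makes $A_Z+N(w)\Id$ positive definite, since it is conjugate to $e^{w}\bar A$ by \eqref{eq:conformal-A-alpha}. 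Therefore the power $\alpha$ of its determinant is well defined. The boundary conditions are already given by Lemma~\ref{lem:flat-boundary-alpha}.

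The only delicate point is the orientation and sign convention. We must check that the conformal transformation law \eqref{eq:conformal-A-alpha} was derived with $\bar N$ and $N$ oriented consistently with the pushforward of $\nu$, so that no sign flips in $N(w)$ or in the speed. We verify this by comparing the conventions $h_{ij}=\langle D_{e_i}\nu,e_j\rangle$ and $\bar h_{ij}=\bar g(\bar D_{e_i}\bar N,e_j)$, which are the same convention, so the sign carries through as stated. The remaining bookkeeping is to record that $Z$ avoids $F^{-1}(E)=\infty$, which ensures $w$ is smooth along $\Sigma_t$.
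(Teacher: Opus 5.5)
Your proposal is correct and follows essentially the same route as the paper: you pull the flow back through the isometry $F\colon(\HH,\bar g)\to\ol\B^{n+1}\setminus\{E\}$, write $\bar N=e^{-w}N$, and convert $\bar K$ into Euclidean terms via \eqref{eq:conformal-K-alpha}. Your observation that $\partial_tZ=dF^{-1}(\partial_tX)$ is already purely normal in the parametrization $Z=F^{-1}\circ X$ is also right, so the paper's explicit tangential reparametrization step is harmless but not actually needed.
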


\begin{proof}
In the $z$ variables, the Euclidean metric of the ball pulls back to
$\bar g=e^{2w}\delta$.  The $\bar g$-unit normal is $\bar N=e^{-w}N$.
Let $V=\partial_tZ$.  Since $X=F\circ Z$ and the original flow has normal
speed $-\bar K^\alpha$ in the ball metric, the normal velocity satisfies
\begin{equation*}
        \ip{dF(V)}{\nu}_{\B}=\bar g(V,\bar N)
        =e^w\ip{V}{N}=-\bar K^\alpha.
\end{equation*}
Therefore
\begin{equation*}
        \ip{V}{N}=-e^{-w}\bar K^\alpha.
\end{equation*}
Using \eqref{eq:conformal-K-alpha}, we obtain
\begin{equation*}
\begin{aligned}
        e^{-w}\bar K^\alpha
        &=e^{-w}\left(e^{-nw}\det(A_Z+N(w)\Id)\right)^\alpha  \\
        &=e^{-(n\alpha+1)w}\det(A_Z+N(w)\Id)^\alpha.
\end{aligned}
\end{equation*}
Thus the geometric normal velocity of $\Sigma_t$ is
\begin{equation*}
        \ip{\partial_tZ}{N}=-\Phi_\alpha(Z,N,A_Z).
\end{equation*}

After a tangential reparametrization, this normal-velocity identity gives the vector equation \eqref{s4.flow-pb}.  For an arbitrary parametrization, write
\begin{equation*}
        \partial_tZ=V^{\top}-\Phi_\alpha N,
\end{equation*}
where $V^{\top}$ is tangent to $\Sigma_t$.  Let $W_t$ be the vector field on
$M$ defined by $dZ_t(W_t)=V^{\top}$.  Solving
\begin{equation*}
        \frac{d}{dt}\varphi_t=-W_t\circ\varphi_t,
        \qquad \varphi_0=\mathrm{id},
\end{equation*}
and setting $\widetilde Z=Z\circ\varphi_t$, we get
\begin{equation*}
        \partial_t\widetilde Z
        =\left(\partial_tZ+dZ_t(\partial_t\varphi_t)\right)\circ\varphi_t
        =-\Phi_\alpha(\widetilde Z,\widetilde N,\widetilde A_Z)\widetilde N.
\end{equation*}
At boundary points, $\partial_tZ$ is tangent to the fixed plane $\PiH$ and
$N$ is also tangent to $\PiH$ by \eqref{eq:flat-bd-alpha}.  Hence $V^{\top}$
is tangent to $T\Sigma_t\cap T\PiH=T(\partial\Sigma_t)$, so the above
reparametrization preserves $\partial M$.  Renaming $\widetilde Z$ as $Z$
gives \eqref{s4.flow-pb} together with the flat free-boundary condition \eqref{eq:flat-bd-alpha}.

Finally, \eqref{eq:Phi-alpha-explicit} follows from
\eqref{eq:conformal-factor-alpha} and \eqref{eq:Nw-alpha}.
\end{proof}

The next lemma shows that the conformal flow \eqref{s4.flow-pb} becomes Euclidean convex sufficiently near the extinction time.

\begin{lemma}\label{lem:large-s-gauss-alpha}
	There exist $t_0\in(0,T)$ and $c_2>0$, depending only on $n$, $\alpha$, and $M_0$, such that, for every $t\in [t_0,T)$, the conformal hypersurface
	$Z(M,t)\subset\HH$ along the flow \eqref{s4.flow-pb} is a smooth strictly convex free boundary hypersurface in the Euclidean metric with
	\begin{equation}
		A_Z \geq c_2 \mathrm{Id} \qquad\mathrm{on}\  M\times [t_0,T).
		\label{equ-AZlowbd}
	\end{equation}
	Moreover, $N(w)\leq 0$.
\end{lemma}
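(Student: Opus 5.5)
The plan is to read the Euclidean Weingarten map of $\Sigma_t$ off the conformal transformation formula \eqref{eq:conformal-A-alpha}, and to feed in the uniform lower bound $\kappa_i\ge c_1$ from Lemma \ref{lem:kappa-lower-alpha} together with the Hausdorff contraction to $p=-E$ from Lemma \ref{prop:finite-extinction-alpha}. Inverting \eqref{eq:conformal-A-alpha} gives
\begin{equation*}
        A_Z=e^{w}\bar A-N(w)\Id=\frac{2}{\Lambda(z)}\bar A-N(w)\Id .
\end{equation*}
Here $\bar A$ is exactly the Weingarten map of $M_t$ in the ball, so $\bar A\ge c_1\Id$ for all $t$.

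Since $F^{-1}$ is smooth near $-E$ with $F^{-1}(-E)=0$, and $M_t\to -E$ in Hausdorff distance, there is $t_0<T$ with $|z|\le\delta$ on $\Sigma_t$ for $t\ge t_0$. This gives $\Lambda(z)=|E+z|^2\le(1+\delta)^2$. Hence, once $N(w)\le0$ is known,
\begin{equation*}
        A_Z\ge \frac{2c_1}{(1+\delta)^2}\Id\ge c_1\Id,
\end{equation*}
so \eqref{equ-AZlowbd} holds with $c_2=c_1$. Strict Euclidean convexity of $\Sigma_t$ follows, and the free-boundary property is already Lemma \ref{lem:flat-boundary-alpha}. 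Everything therefore reduces to the sign statement $N(w)\le0$, which by \eqref{eq:Nw-alpha} is equivalent to
\begin{equation*}
        \ip{z+E}{N}\ge0\qquad\text{on }\Sigma_t,\ t\ge t_0 .
\end{equation*}

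I expect this sign condition to be the main obstacle. Note that we cannot simply bound $|N(w)|$: near the origin it has size about $2|\ip{E}{N}|$, which is not small. The sign is needed because the term $-N(w)\Id$ must help, not hurt.

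My first attempt is to translate the condition back to the ball. The map $F^{-1}$ is an inversion centred at $E$ composed with a similarity; one checks this from the explicit formula, since $\Lambda$ plays the role of $|X-E|^2$. For an inversion, the support function with respect to the centre transforms as
\begin{equation*}
        \ip{z-z_E}{N}=\pm c\,\frac{\ip{X-E}{\nu}}{|X-E|^2},
\end{equation*}
where $z_E$ is the image of the relevant point. I would verify that the point $-E$ in the $z$-picture corresponds to $E$ in the $x$-picture, possibly up to the precise normalization. The condition then becomes $u_E=\ip{X-E}{\nu}\le0$ on $M_t$, that is, the tangent hyperplane of $\Omega_t$ at every point of $M_t$ separates $\Omega_t$ from the antipodal point $E$.

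To prove this near extinction I would argue as follows.
\begin{itemize}
\item On $\partial M_t$ we have $\ip{X}{\nu}=0$, so $u_E=-\ip{E}{\nu}$. Since $\nu$ is tangent to $\Sph^n$ at $X\approx -E$ and points out of the small cap $\Omega_t\cap\Sph^n$, this quantity has the correct sign. Concretely, $\ip{E}{\nu}$ is a nonnegative multiple of the tangential component of $X+E$, and so is nonnegative.
\item For interior points, I would use the rolling-ball consequence of $\kappa_i\ge c_1$, applied after corner rounding exactly as in Lemma \ref{lem:radius-comparison-alpha}. This puts $\Omega_t$ inside a ball of radius $1/c_1$ tangent at $X$. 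Combined with $\diam\Omega_t\to0$ and the orthogonality at the free boundary, it forces $\nu$ to be close to the normals of the small cap, so $\ip{X-E}{\nu}\approx-\ip{2E}{\nu}$.
\item If this is not robust enough, the fallback is a maximum-principle argument for $u_E$. Equation \eqref{eq:evol-u-alpha} holds with $p=E$, and Lemma \ref{lem:bd-identities-alpha}(v) gives $\nabla_\mu u_E=h_{\mu\mu}(1-\ip{E}{X})\ge0$, which shows that the sign $u_E\le0$ cannot first fail at the free boundary.
\end{itemize}
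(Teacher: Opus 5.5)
Your reduction is the same as the paper's. From $A_Z=e^{w}\bar A-N(w)\Id$, $\bar A\ge c_1\Id$, and $e^{w}=2/\Lambda$ bounded below near the origin, everything rests on $N(w)\le0$. That is exactly $u_E=\langle X-E,\nu\rangle\le0$ on $M_t$. You left the sign open; the paper fixes it via $Q(z)(z+E)=\tfrac{\Lambda}{2}(E-F(z))$ and $\nu=Q(Z)N$, which give $\langle Z+E,N\rangle=\tfrac{\Lambda}{2}\langle E-X,\nu\rangle$.

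Your boundary step is also correct: on $\partial M_t$ the tangent hyperplane passes through the origin and supports $\Omega_t\ni -E$, so $u_E=-\langle E,\nu\rangle=-\langle X+E,\nu\rangle\le0$. The gap is the \emph{interior} sign, and none of your three bullets establishes it.
\begin{itemize}
\item The rolling-ball bullet gives only approximate information, and approximation cannot decide this sign. Write $u_E=\langle X+E,\nu\rangle-2\langle E,\nu\rangle$. The error term $\langle X+E,\nu\rangle$ is $O(\diam\Omega_t)$ but \emph{nonnegative}, because $-E\in\Omega_t$. Near $\partial M_t$ the main term $\langle E,\nu\rangle$ is of the same small order, since on $\partial M_t$ the two terms coincide.
\item You also never prove $\langle E,\nu\rangle\ge0$ in the interior. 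The normals of $M_t$ fill out roughly the upper hemisphere, so they are not close to any fixed direction.
\item The parabolic fallback only propagates $u_E\le0$ forward from a time at which it is already known, and you never produce such a time.
\item Moreover, $\nabla_\mu u_E\ge0$ is the wrong sign for ruling out a boundary maximum: a boundary maximum has exactly that property. The boundary is harmless only because of your direct first bullet.
\end{itemize}

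The missing ingredient is a static, purely spatial critical-point argument that uses strict convexity. Since $\nabla_i u_E=h_i{}^j\langle X-E,e_j\rangle$ and $h>0$, at an interior positive maximum of $u_E$ one has $(X-E)^T=0$, i.e.\ $X-E=u_E\nu$. The supporting-hyperplane inequality at $X$ with $-E\in\Omega_t$ then gives $0\ge\langle -E-X,\nu\rangle=(1-|X|^2)/u_E>0$. This is a contradiction, since $|X|<1$ at interior points.

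The paper runs the same mechanism in two steps. It writes $\langle E-X,\nu\rangle=\langle E,\nu\rangle-\langle X,\nu\rangle$ and first shows $\langle\nu,E\rangle\ge0$: at an interior negative minimum $\nu=-E$, and the supporting hyperplane through $-E$ forces $1+X_{n+1}\le0$. It then shows $\langle X,\nu\rangle\le0$: at an interior positive maximum $X=\langle X,\nu\rangle\nu$, so $X_{n+1}=\langle X,\nu\rangle\langle\nu,E\rangle\ge0$, contradicting $X_{n+1}<0$ for $t\ge t_0$. With either version inserted, the rest of your proof goes through.
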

\begin{proof}
By Lemma \ref{prop:finite-extinction-alpha}, the convex bodies $\Omega_t$ are nested and
their intersection is the single point $p=-E$. In particular,
$p\in\Omega_t$ for every $t<T$. Since $M_t$ converges to $p$ in
Hausdorff distance, there exists $t_0\in(0,T)$ such that $X_{n+1}<0$ on $M_t$  for every $t\in[t_0,T)$.

We first establish two sign properties for the original hypersurface. 
On $\partial M_t$, the tangent hyperplane of the curved face is a
supporting hyperplane of the convex body $\Omega_t$. Since $p=-E$
belongs to $\Omega_t$, we have
\begin{equation*}
 \langle -E-X,\nu\rangle\leq0.
\end{equation*}
The free-boundary condition gives $\langle X,\nu\rangle=0$ on
$\partial M_t$. Hence
\begin{equation*}
 \langle\nu,E\rangle\geq0
 \qquad\text{on }\partial M_t.
\end{equation*}

We claim that $\langle\nu,E\rangle\geq0$ on the whole hypersurface. Otherwise, $\langle\nu,E\rangle$
would attain a negative minimum at an interior point. At this point,
\begin{equation*}
 0=\nabla_i \langle\nu,E\rangle
   =\langle D_{e_i}\nu,E\rangle
   =h_i{}^j\langle e_j,E\rangle.
\end{equation*}
Since $h_i{}^j$ is positive definite, the tangential component of
$E$ vanishes. It follows that $\nu=-E$. The supporting hyperplane
inequality with $p=-E$ then gives
\begin{equation*}
 0\geq\langle -E-X,\nu\rangle
   =\langle -E-X,-E\rangle
   =1+X_{n+1}.
\end{equation*}
This is impossible because an interior point of $M_t$ lies in the
open unit ball and therefore satisfies $X_{n+1}>-1$. Consequently,
\begin{equation}\label{s4.nuE}
 \langle\nu,E\rangle\geq0
 \qquad\text{on }M_t.
\end{equation}

We next show that
\begin{equation}\label{s4.Xnu}
 \langle X,\nu\rangle\leq0
 \qquad\text{on }M_t.
\end{equation}
The function $\langle X,\nu\rangle$ vanishes on $\partial M_t$.
If it had a positive maximum at an interior point, then
\begin{equation*}
 0=\nabla_i \langle X,\nu\rangle
   =h_i{}^j\langle X,e_j\rangle.
\end{equation*}
Strict convexity implies that $X^T=0$ at this point. Hence $ X=\langle X,\nu\rangle\nu$.  
Using $\langle X,\nu\rangle>0$ and \eqref{s4.nuE}, we obtain
\begin{equation*}
 X_{n+1}=\langle X,\nu\rangle\langle\nu,E\rangle\geq0,
\end{equation*}
which contradicts $X_{n+1}<0$. This proves \eqref{s4.Xnu}.

For the Cayley map \eqref{eq:cayley-alpha}, its differential has the form
\begin{equation*}
 dF_z=e^{w(z)}Q(z),
 \qquad Q(z)\in O(n+1).
\end{equation*}
A direct differentiation gives
\begin{equation}\label{s4.QzE}
 Q(z)(z+E)
  =
 \frac{\Lambda(z)}{2}\bigl(E-F(z)\bigr).
\end{equation}
The map $F$ preserves the interior side, so the unit normals satisfy $ \nu=Q(Z)N$.  
Using \eqref{s4.QzE}, $X=F(Z)$, \eqref{s4.nuE} and \eqref{s4.Xnu}, we find
\begin{align*}
 \langle Z+E,N\rangle
 &=
 \langle Q(Z)(Z+E),Q(Z)N\rangle \\
 &=
 \frac{\Lambda(Z)}{2}
 \langle E-X,\nu\rangle \\
 &=
 \frac{\Lambda(Z)}{2}
 \bigl(\langle E,\nu\rangle-\langle X,\nu\rangle\bigr)
 \geq0.
\end{align*}
It follows from \eqref{eq:Nw-alpha} that
\begin{equation}\label{s4.Nw}
 N(w)
 =
 -\frac{2\langle Z+E,N\rangle}{\Lambda(Z)}
 =
 -\langle E-X,\nu\rangle
 \leq0.
\end{equation}

By Lemma \ref{lem:kappa-lower-alpha}, the Weingarten map in the ball metric satisfies $\overline A\geq c_1\operatorname{Id}$.  
The conformal transformation formula \eqref{eq:conformal-A-alpha} and \eqref{s4.Nw}
give
\begin{align*}
 A_Z
 &=
 e^w\overline A-N(w)\operatorname{Id}  \geq
 c_1e^w\operatorname{Id}.
\end{align*}
Since $e^{-w}={\Lambda(Z)}/{2}\leq C_0$ 
on the relevant neighborhood, we obtain
\begin{equation*}
 A_Z\geq\frac{c_1}{C_0}\operatorname{Id}.
\end{equation*}
Thus \eqref{equ-AZlowbd} holds with $c_2=c_1/C_0$.
\end{proof}

For later use, we also observe that for $t\in [t_0,T)$ the Gauss image lies in the
upper hemisphere. A direct calculation from the Cayley map gives
\begin{equation*}
 Q(Z)E
 =
 \frac{2(1-X_{n+1})}{|E-X|^2}(E-X)-E.
\end{equation*}
Therefore
\begin{align*}
 \langle N,E\rangle
 &=
 \langle\nu,Q(Z)E\rangle \\
 &=
 \frac{
 (1-|X|^2)\langle\nu,E\rangle
 -
 2(1-X_{n+1})\langle X,\nu\rangle
 }{|E-X|^2}
 \geq0.
\end{align*}
Equality holds on the free boundary by \eqref{eq:flat-bd-alpha}. Hence
\begin{equation*}
 N(\Sigma_t)\subset\mathbb S^n_+,
 \qquad
 N(\partial\Sigma_t)\subset\partial\mathbb S^n_+.
\end{equation*}

Finally, the even reflection of $\Sigma_t$ across $\Pi$ is a
$C^2$ closed locally strictly convex hypersurface. The Hadamard
theorem implies that its Gauss map is a diffeomorphism onto
$\mathbb S^n$. By reflection symmetry, the restriction
\begin{equation*}
 N:\Sigma_t\longrightarrow\mathbb S^n_+
\end{equation*}
is a diffeomorphism. This also justifies the use of the Gauss map
coordinates in the following subsection.

\subsection{Volume normalization}
\label{sec:normalization-alpha}

Let $\Omega_t^{\HH}$ be the Euclidean region in $\HH$ bounded by $\Sigma_t$
and its flat part on $\PiH$. Let
\begin{equation*}
        V_+=|\B^{n+1}\cap\HH|=\frac{|\Sph^n_+|}{n+1}.
\end{equation*}
We normalize the flow \eqref{s4.flow-pb} so that the enclosed volume is preserved.  Define the normalized hypersurface by
\begin{equation*}
        Y(\cdot,s)=\mathrm{e}^s Z(\cdot,t),
\end{equation*}
where the normalized time parameter $s$ is given by
\begin{equation}
    \label{equ-s}
	s = \dfrac{1}{n+1}\log \dfrac{V_+}{|\Omega_t^{\mathbb{H}}|}.
\end{equation}
Since $\Omega_t$ contracts to $p=-E$ and $F^{-1}(p)=0$, the half-space volume $|\Omega_t^{\mathbb{H}}|$ tends to zero as $t\nearrow T$, and hence $s\to\infty$.  We also write $\rho(s)=\mathrm{e}^{-s}$.  By \eqref{equ-s}, the normalized region $\widehat\Omega_s=e^s\Omega_t^{\mathbb{H}}$ enclosed by $Y(M,s)$ has volume $V_+$.

For $Y$, let $A$ and $K=\det A$ denote the Euclidean Weingarten map and Gauss
curvature.  Since the Weingarten map of $Z=\rho Y$ is $\rho^{-1}A$, define
\begin{equation*}
	\Lambda = \Lambda(\rho Y) = 1+2\rho Y_{n+1}+\rho^2|Y|^2,
	\qquad
	q=-\frac{2\ip{E+\rho Y}{N}}{\Lambda}.
\end{equation*}
Set
\begin{equation*}
	\Theta=\Lambda^{n\alpha+1}
	\det(\Id+\rho q A^{-1})^\alpha.
\end{equation*}
Substituting $Z=\rho Y$ and $A_Z=\rho^{-1}A$ into
\eqref{eq:Phi-alpha-explicit} gives
\begin{equation*}
	\begin{aligned}
		\Phi_\alpha(\rho Y,N,\rho^{-1}A)
		&=2^{-(n\alpha+1)}\Lambda^{n\alpha+1}
		\det(\rho^{-1}A+q \Id)^\alpha  \\
		&= 2^{-(n\alpha+1)}\rho^{-n\alpha}\Theta K^{\alpha}.
	\end{aligned}
\end{equation*}
The relation between the time parameters $t$ and $s$ is given by
\begin{align*}
	\dfrac{\mathrm{d}s}{\mathrm{d}t} = & \dfrac{\mathrm{e}^{(n+1)s}}{|\mathbb{S}_+^n|}\int_{Z(M,t)} \Phi_{\alpha}(Z,N,A_Z)\mathrm{d}\mu_t^{\mathbb{H}}\\
	= & \dfrac{\mathrm{e}^{s}}{|\mathbb{S}_+^n|} \int_{Y(M,s)} \Phi_{\alpha}(\rho Y, N, \rho^{-1}A) \mathrm{d}\mu_s\\
	=& \dfrac{\mathrm{e}^s}{|\mathbb{S}_+^n|}\int_{\mathbb{S}_+^n} 2^{-(n\alpha+1)}\rho^{-n\alpha}\Theta K^{\alpha-1} \mathrm{d}\sigma.
\end{align*}
Differentiation gives
\begin{align*}
	\partial_s Y(x,s) &= Y(x,s) + \mathrm{e}^{s}  \partial_{t} Z(x,t)  \dfrac{\mathrm{d}t}{\mathrm{d}s}\\
	&= Y(x,s) - \dfrac{\Theta K^{\alpha}}{\dfrac{1}{|\mathbb{S}_+^n|}\displaystyle\int_{\mathbb{S}_+^n}\Theta K^{\alpha-1}\mathrm{d}\sigma}N.
\end{align*}

By Lemma \ref{lem:large-s-gauss-alpha}, after increasing $s_0=s_0(t_0)$ if necessary,
we use the Gauss map variable
\begin{equation*}
	\xi=N\in\Sph^n_+=\{\xi\in\Sph^n:\xi_{n+1}\ge0\}.
\end{equation*}
The support function is
\begin{equation}\label{s5.supp-h}
	h(\xi,s)=\langle Y(\xi,s),\xi \rangle,
	\qquad
	Y(\xi,s)=\overline{\nabla} h(\xi,s)+h(\xi,s)\xi.
\end{equation}
Let
\begin{equation*}
	B_{ij}=\overline{\nabla}_i\overline{\nabla}_j h+h\sigma_{ij},
	\qquad
	A=B^{-1},
	\qquad
	K=(\det B)^{-1}.
\end{equation*}
In the support-function coordinates \eqref{s5.supp-h}, the quantities $\Lambda$, $q$, and $\Theta$ are given by
\begin{equation}\label{s5.Dq}
	\Lambda= 1 + 2\rho \langle \overline{\nabla} h+h\xi, E \rangle + \rho^2(h^2+|\overline{\nabla} h|^2_{\sigma}),
	\qquad
	q= -\frac{2(\xi_{n+1}+\rho h)}{\Lambda},
\end{equation}
and
\begin{equation*}
	\Theta=\Lambda^{n\alpha+1}\det(\Id+\rho q B)^\alpha.
\end{equation*}
Let $\eta$ be the outward conormal of the equator in $\Sph^n_+$.  A standard computation shows that the support function of the normalized conformal flow satisfies
\begin{equation}\label{eq:support-flow-alpha}
	\left\{
	\begin{array}{ll}
		\partial_sh=h-\dfrac{\Theta K^{\alpha}}{\dfrac{1}{|\mathbb{S}_+^n|}\displaystyle\int_{\mathbb{S}_+^n}\Theta K^{\alpha-1}\mathrm{d}\sigma}, & \hbox{in }\Sph^n_+,\\
		\overline{\nabla}_\eta h=0, & \hbox{on }\bd\Sph^n_+.
	\end{array}
	\right.
\end{equation}

Since $p=-E\in \Omega_t$ and $F^{-1}(p)=0$, the origin belongs to the half-body $\widehat{\Omega}_s$.  Consequently, $h(\xi,s)\geq 0,\ \xi\in\mathbb{S}^n_+$.

\section{Half-space entropy and support-function bounds}
\label{sec:entropy-alpha}
Our goal of this section is to prove uniform positive upper and lower bounds for the support function $h$ along the normalized flow \eqref{eq:support-flow-alpha}. We use the half-space entropy as the main tool. Its almost monotonicity, with error controlled by the integrable diameter decay, gives the entropy limit. A two-time comparison then implies convergence of the entropy point to the origin. Together with the static entropy geometry, this yields uniform bounds on the support function.


Recall that the volume normalization is equivalent to
\begin{equation}\label{eq:half-minkowski-normalized}
        \dfrac{1}{|\mathbb{S}_+^n|}\int_{\mathbb{S}_+^n}\frac{h}{K}\mathrm{d}\sigma=1,
\end{equation}
since the left hand side equals $(n+1)|\widehat\Omega_s|/|\mathbb{S}^n_+|$.

\subsection{Half-space entropy geometry}
We first recall the entropy facts for convex half-bodies. For $z\in\PiH$ denote
\begin{equation*}
        u_z(\xi)=h(\xi)-\langle z,\xi\rangle,\qquad \xi\in\mathbb{S}^n_+.
\end{equation*}
A point $z$ is admissible if $u_z>0$.  For $\alpha\ne1$ define
\begin{equation*}
        \cE_{\alpha,+}(\Omega,z)
        =\frac{\alpha}{\alpha-1}
        \log\left(\dfrac{1}{|\mathbb{S}_+^n|}\int_{\mathbb{S}_+^n} u_z^{1-\frac{1}{\alpha}}\mathrm{d}\sigma\right),
\end{equation*}
and for $\alpha=1$ define
\begin{equation*}
        \cE_{1,+}(\Omega,z)=\dfrac{1}{|\mathbb{S}_+^n|}\int_{\mathbb{S}_+^n}\log u_z \mathrm{d}\sigma.
\end{equation*}
The entropy of the half-body is
\begin{equation*}
        \cE_{\alpha,+}(\Omega)
        =\sup_{z\in\PiH,\ u_z>0}\cE_{\alpha,+}(\Omega,z).
\end{equation*}

Let $\Omega^d$ denote the double body obtained by even reflection of $\Omega$ across $\PiH$.  It is a
closed convex body, $|\Omega^d|=2|\Omega|$, and its support function is the even
extension of $h$.  Since $|\widehat\Omega_s|=V_+$, the doubled normalized body
has the volume of the Euclidean unit ball.  The entropy point of $\Omega^d$ is
unique by \cite[Lemma 2.5]{AGN2016}. Reflection symmetry forces it to lie in
$\PiH$.  Consequently the half-space entropy is exactly the closed entropy of
the doubled body.

\begin{proposition}\label{prop:half-entropy-geometry}
Let $\alpha > \frac{1}{n+2}$ and let $\Omega\subset\HH$ be a compact convex half-body
with nonempty interior and $|\Omega|=V_+$.  Then the following statements hold.
\begin{enumerate}[label=\rm(\roman*)]
\item There is a unique entropy point $z_e(\Omega)\in\Int(\Omega^d)\cap\PiH$.  Equivalently, $z_e(\Omega)$ lies in the relative interior of the flat part $\Omega\cap\PiH$.
It is characterized by
\begin{equation}\label{eq:entropy-point-condition}
        \int_{\Sph^n_+}\xi_\beta u_{z_e}^{-1/\alpha}\,\dd\sigma=0,
        \qquad \beta=1,\ldots,n.
\end{equation}
\item The entropy is nonnegative: $ \cE_{\alpha,+}(\Omega)\ge0$.  
\item There are constants $b=b(n,\alpha)>0$ and $C=C(n,\alpha)$ such that
\begin{equation}\label{eq:entropy-radius-control}
\begin{split}
        \min\{r_-(\Omega),\omega_-(\Omega)\}
        &\ge C^{-1}e^{-b\cE_{\alpha,+}(\Omega)},\\
        \max\{r_+(\Omega),\omega_+(\Omega)\}
        &\le Ce^{nb\cE_{\alpha,+}(\Omega)},
\end{split}
\end{equation}
where $\omega_-$ and $\omega_+$ denote the minimum and maximum width of $\Omega$, respectively. 
\item For every $L<\infty$ there are $d_L,c_L>0$ such that, if
$\cE_{\alpha,+}(\Omega)\le L$, then
\begin{equation}\label{eq:entropy-point-interior}
        \dist(z_e(\Omega),\partial\Omega^d)\ge d_L
\end{equation}
and, for every admissible $z\in\PiH$,
\begin{equation}\label{eq:entropy-quadratic-stability}
        \cE_{\alpha,+}(\Omega,z)
        \le \cE_{\alpha,+}(\Omega)
        -c_L\min\{1,|z-z_e(\Omega)|^2\}.
\end{equation}
\end{enumerate}
\end{proposition}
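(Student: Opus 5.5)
The plan is to reduce every assertion to the corresponding statement for closed convex bodies by even reflection, and then quote the closed-body entropy geometry of Andrews and Andrews--Guan--Ni \cite{And00,AGN2016}. Let $\Omega^d$ be the doubled body. Its support function is the even extension $h^d(\xi',\xi_{n+1})=h(\xi',|\xi_{n+1}|)$, and $|\Omega^d|=2V_+=|\B^{n+1}|$. For $z\in\PiH$ the function $u_z^d=h^d-\langle z,\cdot\rangle$ is the even extension of $u_z$, because $\langle z,\xi\rangle$ does not involve $\xi_{n+1}$. Hence
\begin{equation*}
\frac{1}{|\Sph^n|}\int_{\Sph^n}(u^d_z)^{1-\frac1\alpha}\,d\sigma=\frac{1}{|\Sph^n_+|}\int_{\Sph^n_+}u_z^{1-\frac1\alpha}\,d\sigma,
\end{equation*}
and likewise for the logarithm when $\alpha=1$. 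So $\cE_{\alpha,+}(\Omega,z)=\cE_\alpha(\Omega^d,z)$ for $z\in\PiH$.

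\textbf{Step 1: the entropy point lies on $\PiH$.} By \cite[Lemma 2.5]{AGN2016}, the closed entropy $\cE_\alpha(\Omega^d,\cdot)$ is strictly concave on $\Int\Omega^d$. Since $\alpha>\frac1{n+2}$, it tends to $-\infty$ at $\partial\Omega^d$, or at least the supremum is attained in the interior. So it has a unique maximizer $z^d$. The reflection $R$ across $\PiH$ preserves $\Omega^d$ and the entropy functional, so $Rz^d=z^d$ and $z^d\in\PiH$. It follows that $\cE_{\alpha,+}(\Omega)=\cE_\alpha(\Omega^d)$ and $z_e(\Omega)=z^d$. The admissible points $z\in\PiH$ are exactly the points of $\Int\Omega^d\cap\PiH$, which is the relative interior of the flat face.

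Differentiating in $z_\beta$ gives the critical-point equation $\int_{\Sph^n}\xi_\beta(u^d)^{-1/\alpha}=0$. For $\beta\le n$ the integrand is even, so the integral is twice the hemisphere integral, which gives \eqref{eq:entropy-point-condition}. For $\beta=n+1$ the integrand is odd and the equation holds automatically. This proves (i).

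\textbf{Step 2: nonnegativity and radius control.} Part (ii) is the closed inequality $\cE_\alpha(\Omega^d)\ge\cE_\alpha(B_1)=0$ for bodies with the volume of the unit ball. For $\alpha\ge1$ this follows from the Blaschke--Santal\'o inequality; for $\frac1{n+2}<\alpha<1$ it is proved in \cite{AGN2016}. We will cite it rather than reprove it.

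For (iii), we apply the closed inradius, circumradius and width bounds of \cite{AGN2016} to $\Omega^d$. The half-body quantities are comparable to those of $\Omega^d$ up to factors depending only on $n$:
\begin{itemize}
\item $\omega_\pm(\Omega)$ and $\omega_\pm(\Omega^d)$ are comparable, since the width of $\Omega^d$ in a given direction is at most twice that of $\Omega$;
\item $r_+(\Omega)\le r_+(\Omega^d)$;
\item a ball in $\Omega^d$ of radius $r$ gives a ball of radius $r/2$ inside $\Omega$, by taking half of a symmetric inscribed ball.
\end{itemize}
For the last point we will use that the John ellipsoid, or the entropy-centered ball, of $\Omega^d$ can be chosen $R$-symmetric by uniqueness.

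\textbf{Step 3: quantitative stability, the main obstacle.} Part (iv) needs uniform control over the class $\mathcal K_L=\{\Omega:|\Omega|=V_+,\ \cE_{\alpha,+}(\Omega)\le L\}$. By (iii) this class is bounded with inradius bounded below, so by Blaschke selection it is compact in the Hausdorff topology.

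The plan is a contradiction argument. Suppose \eqref{eq:entropy-point-interior} failed along a sequence $\Omega_k\in\mathcal K_L$. Pass to a limit $\Omega_\infty$. Continuity of $\cE_\alpha(\cdot,z)$ under Hausdorff convergence, locally uniformly for $z$ in the interior, together with uniqueness of the maximizer, forces $z_e(\Omega_k)\to z_e(\Omega_\infty)$, which is an interior point. This contradicts $\dist(z_e(\Omega_k),\partial\Omega_k^d)\to0$.

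For \eqref{eq:entropy-quadratic-stability}, we compute the Hessian in $z$:
\begin{equation*}
D^2_z\cE=-\frac1\alpha\,\frac{\int u^{-1-1/\alpha}\,\xi\otimes\xi}{\int u^{1-1/\alpha}}+(\text{rank-one term}),
\end{equation*}
and show it is uniformly negative definite on $\{\dist(z,\partial\Omega^d)\ge d_L/2\}$, where $u$ is bounded above and below. This yields the quadratic decay near $z_e$. Away from $z_e$, concavity along the segment from $z_e$ to $z$ gives a linear gap, which is in particular $\ge c_L\min\{1,|z-z_e|^2\}$.

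The delicate point is the Hessian sign when $\alpha<1$. There the rank-one term has the wrong sign, and we will instead use the strict concavity argument of \cite[Lemma 2.5]{AGN2016} together with compactness of $\mathcal K_L$ to get a uniform constant.
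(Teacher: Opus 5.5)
Your proposal is correct and follows essentially the paper's route. Even reflection identifies $\cE_{\alpha,+}(\Omega,z)$ with the closed entropy of $\Omega^d$ for $z\in\PiH$, symmetry and uniqueness put the entropy point on $\PiH$, and (ii)--(iv) then follow from the closed-body results of Andrews--Guan--Ni plus compactness; your inradius and width comparisons supply details the paper only asserts.

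The case $\alpha<1$ that you flag in Step 3 needs no further compactness argument. By Cauchy--Schwarz, $(\int\langle\xi,v\rangle u^{-1/\alpha})^2\le\int\langle\xi,v\rangle^2u^{-1-1/\alpha}\int u^{1-1/\alpha}$, which absorbs the positive rank-one term. The Hessian is then bounded above by $-\int\langle\xi,v\rangle^2u^{-1-1/\alpha}/\int u^{1-1/\alpha}$, which is uniformly negative wherever $u$ is bounded above and below.
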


\begin{proof}
Apply Corollary 2.2, Proposition 2.7, and Lemmas 4.2--4.4 of
\cite{AGN2016} to the doubled body $\Omega^d$.  The volume of $\Omega^d$ is
$|\B^{n+1}|$, and its entropy point belongs to the fixed hyperplane of the
reflection.  Restriction to the upper hemisphere gives
\eqref{eq:entropy-point-condition}.  The inradius, circumradius and width of
$\Omega^d$ are comparable, with dimensional constants, to the corresponding
quantities of the half-body.  This gives \eqref{eq:entropy-radius-control}.
The compactness argument in \cite[Lemmas 4.2--4.4]{AGN2016} gives
\eqref{eq:entropy-point-interior} and
\eqref{eq:entropy-quadratic-stability}.
\end{proof}

\subsection{Decay of the unnormalized diameter}
\label{sub5.2}
Set
\begin{equation}\label{eq:delta-def}
        \delta(s)=\sup_{\Sph^n_+}|\rho(s)Y(\cdot,s)|.
\end{equation}
Thus $\delta(s)$ is controlled by the Euclidean circumradius of the
unnormalized half-space body. We also have
\begin{equation*}
    \rho(s)\leq C\delta(s).
\end{equation*}
Indeed, by the volume normalization $|\widehat\Omega_s|=V_+$ and  $ \widehat\Omega_s
 \subset  B_{\sup_{\mathbb S^n_+}|Y(\cdot,s)|}(0)$, 
we have
\begin{equation*}
 V_+
 \leq
 |B^{n+1}|
 \left(\sup_{\mathbb S^n_+}|Y(\cdot,s)|\right)^{n+1}.
\end{equation*}
It follows that $ \sup_{\mathbb S^n_+}|Y(\cdot,s)|
 \geq c(n)>0$.  
By the definition \eqref{eq:delta-def}, 
we obtain $\rho(s)\leq C\delta(s)$.

The next estimate is the counterpart of
\cite[(4.11)]{CH2024}.

\begin{lemma}\label{lem:delta-decay-alpha}
There is $C<\infty$, depending only on $n,\alpha$ and the initial hypersurface,
such that
\begin{equation}\label{eq:delta-decay-alpha}
        \delta(s)\le C\rho(s)^{\frac{n+1}{2n+1}}
        =C\mathrm{e}^{-\frac{n+1}{2n+1}s}
\end{equation}
for every sufficiently large $s$.  In particular,
$\delta\in L^1([s_0,\infty))$.
\end{lemma}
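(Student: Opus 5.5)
The plan is to reduce \eqref{eq:delta-decay-alpha} to a purely geometric inequality for the convex bodies $\Omega_t\subset\ol\B^{n+1}$, namely
\begin{equation*}
        r_+(t)^{2n+1}\le C\,|\Omega_t|,
\end{equation*}
and then transport it to the half-space picture through the Cayley map $F$. The key input is the radius comparison \eqref{eq:radius-comparison-alpha} of Lemma~\ref{lem:radius-comparison-alpha}, which says $r_-(t)\ge C^{-1}r_+(t)^2$.

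\textbf{Step 1: a volume lower bound in the ball.} Let $B_{r_-}(a)\subset\Omega_t$ be an inscribed ball. Let $b\in\Omega_t$ be a point with $|b-a|\ge r_+(t)$; such a point exists, since otherwise $\Omega_t$ would lie in the ball $B_{r_+}(a)$ with a smaller radius, up to a harmless factor $2$. By convexity, $\Omega_t$ contains the convex hull of $B_{r_-}(a)\cup\{b\}$. This hull contains a cone of height at least $r_+/2$ over an $n$-disc of radius comparable to $r_-$. Hence
\begin{equation*}
        |\Omega_t|\ge c(n)\,r_+\,r_-^{\,n}\ge c\,r_+\cdot r_+^{2n}=c\,r_+(t)^{2n+1},
\end{equation*}
where the last inequality uses \eqref{eq:radius-comparison-alpha}.

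\textbf{Step 2: transfer to $\HH$.} By Lemma~\ref{prop:finite-extinction-alpha}, $\Omega_t\to\{-E\}$ in Hausdorff distance. Since $F^{-1}(-E)=0$ and the conformal factor satisfies $e^{w}=2/\Lambda(z)$ with $\Lambda(0)=1$, the map $F^{-1}$ is bi-Lipschitz with constants close to $1/2$ and $2$ on a fixed neighbourhood of $-E$. Its Jacobian there is comparable to $2^{-(n+1)}$. Therefore, for $t$ close to $T$,
\begin{equation*}
        |\Omega^{\HH}_t|\ge c\,|\Omega_t|,
        \qquad
        \diam\Omega^{\HH}_t\le C\diam\Omega_t\le C\,r_+(t).
\end{equation*}
The point $-E$ lies in $\Omega_t$, so the origin lies in $\Omega_t^{\HH}$. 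Consequently
\begin{equation*}
        \delta(s)=\sup|Z(\cdot,t)|\le\diam\Omega^{\HH}_t\le C\,r_+(t).
\end{equation*}

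\textbf{Step 3: conclusion.} By \eqref{equ-s}, $|\Omega^{\HH}_t|=V_+\rho(s)^{n+1}$. Combining Steps 1 and 2 gives
\begin{equation*}
        \delta(s)^{2n+1}\le C\,r_+^{2n+1}\le C|\Omega_t|\le C|\Omega_t^{\HH}|=C\rho(s)^{n+1}.
\end{equation*}
This is \eqref{eq:delta-decay-alpha}. Integrability on $[s_0,\infty)$ is then immediate from the exponential decay $\mathrm e^{-\frac{n+1}{2n+1}s}$.

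The main obstacle is the convex-geometry estimate in Step 1. The crude bound $|\Omega_t|\ge c\,r_-^{n+1}$ only yields the exponent $\tfrac12$, which is weaker than required. One genuinely needs the cone argument, which exploits one long direction of length $\sim r_+$ together with $n$ directions of size $\sim r_-$. Care is also needed in Step 2 to keep all constants uniform: this requires fixing the neighbourhood of $-E$ once $t\ge t_0$, with $t_0$ taken from Lemma~\ref{lem:large-s-gauss-alpha}.
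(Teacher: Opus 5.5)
Your proof is correct and uses the same scheme as the paper. Both arguments combine a Chou--Wang radius comparison $r_+^2\le Cr_-$, the cone bound $|\Omega|\ge c_n r_+r_-^{\,n}$, the identity $|\Omega^{\HH}_t|=V_+\rho(s)^{n+1}$, and the fact that the origin lies in the body.

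The only difference is where the radius comparison is applied. The paper applies Chou--Wang directly to the even reflection of the half-space body across $\PiH$. That needs the uniform Euclidean lower curvature bound $A_Z\ge c_2\Id$ from Lemma~\ref{lem:large-s-gauss-alpha} and a smoothing of the $C^2$ reflected surface. You instead reuse Lemma~\ref{lem:radius-comparison-alpha} in the ball and pay with the Lipschitz and Jacobian bounds for $F^{-1}$ near $-E$; this is equally valid.

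Also, the ``factor $2$'' hedge in Step~1 is unnecessary. For any centre $a$ there is a point of $\Omega_t$ at distance at least $r_+$ from $a$, so the cone has height at least $r_+$.
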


\begin{proof}
Reflect the unnormalized half-body across $\PiH$ and denote its inradius and
circumradius by $r_-$ and $r_+$.  The even reflection is a closed $C^2$ strictly convex hypersurface.
If necessary, we approximate its even support function in $C^2$ by
smooth strictly convex even support functions. The lower bound for
the principal curvatures is preserved, up to decreasing the constant.
Applying \cite[Lemma~2.2]{CW00} to the approximating bodies and
passing to the limit gives
\begin{equation}\label{eq:radius-square-decay}
        r_+^2\le Cr_-.
\end{equation}
For every convex body,
\begin{equation*}
        c_n r_-^n r_+\le |\Omega^d_s|\le C_n r_-r_+^n.
\end{equation*}
Since $|\Omega^d_s|=2V_+\rho^{n+1}$, the first inequality and
\eqref{eq:radius-square-decay} imply
\begin{equation*}
        c r_+^{2n+1}\le 2V_+\rho^{n+1}.
\end{equation*}
The origin belongs to the unnormalized body, so every boundary point has
length at most twice the circumradius. This proves
\eqref{eq:delta-decay-alpha}.
\end{proof}

\subsection{Entropy bounds for $\frac{1}{n+2}<\alpha<1$}
In this subsection, we prove a uniform entropy bound for the range $\frac{1}{n+2}<\alpha<1$. The argument relies on a free boundary Minkowski--Reilly inequality and the monotonicity of a suitable Andrews-type quantity \cite{And00}.

We first state the main result of this subsection.
\begin{proposition}
	\label{prop:subunit-entropy-bound-alpha}
	Assume $\frac1{n+2}<\alpha<1$. Then there are constants $s_*<\infty$ and $C<\infty$, depending only on $n,\alpha$ and the initial hypersurface, such that for $s\ge s_*$,
	\begin{equation}\label{eq:subunit-entropy-bound-alpha}
		\mathcal{E}_{\alpha,+}(\widehat\Omega_s)\le C.
	\end{equation}
\end{proposition}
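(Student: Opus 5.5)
We follow Andrews \cite[\S 4]{And00} and the text preceding the proposition. The strategy is to find a quantity that is monotone along the normalized flow \eqref{eq:support-flow-alpha} up to errors of size $\delta(s)$, and which bounds the entropy from above.

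\textbf{The Andrews-type quantity.} Set $p=1-\frac1\alpha<0$; note $p>-(n+1)$ because $\alpha>\frac1{n+2}$. Define
\begin{equation*}
  \mathcal{Q}(s)=\left(\frac{1}{|\Sph^n_+|}\int_{\Sph^n_+}K^{\alpha-1}\,d\sigma\right)^{\frac{1}{1-\alpha}}
  \left(\frac{1}{|\Sph^n_+|}\int_{\Sph^n_+}\frac{h}{K}\,d\sigma\right)^{-\frac{n\alpha-1}{\ldots}},
\end{equation*}
with the volume exponent chosen so that $\mathcal Q$ is scale invariant. The volume factor is constant by \eqref{eq:half-minkowski-normalized}, so effectively $\mathcal Q$ is $\int K^{\alpha-1}$. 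This is the exact analogue of Andrews' functional for the closed flow.

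\textbf{Step 1: free-boundary Minkowski--Reilly inequality.} For convex $h$ on $\Sph^n_+$ with $\overline\nabla_\eta h=0$, and any positive $\varphi$ on $\Sph^n_+$, we will prove a Reilly-type inequality of the form
\begin{equation*}
  \int \varphi^2\det B\,\mathrm{tr}(B^{-1})\ \ge\ \text{(mixed volume/Hölder term)}.
\end{equation*}
This is the inequality Andrews uses to show $\frac{d}{ds}\int K^{\alpha-1}\le 0$. We get it by even reflection: the reflected $h$ and any even $\varphi$ are admissible test data for the closed statement. Alternatively, we integrate by parts directly; the Neumann condition kills the boundary terms because $B_{\eta\beta}=0$ on the equator, the analogue of Lemma \ref{lem:bd-identities-alpha}(ii).

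\textbf{Step 2: almost monotonicity of $\mathcal Q$.} We differentiate $\int K^{\alpha-1}$ along \eqref{eq:support-flow-alpha}. The exact soliton part ($\Theta\equiv1$) is nonpositive by Step 1 and Hölder, just as in \cite{And00}. The perturbation $\Theta-1$ contains $\Lambda^{n\alpha+1}-1=O(\delta)$, which is harmless, and the factor $\det(\Id+\rho qB)^\alpha$, which involves $B$. We will not bound the latter pointwise. Instead we write
\begin{equation*}
  \det(\Id+\rho qB)=1+\sum_k(\rho q)^k\sigma_k(B)
\end{equation*}
and integrate against $K^{\alpha-1}$, using that $\sigma_k(B)$ is divergence free, so derivatives move onto $q$ and the test functions. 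The Neumann condition again removes the boundary terms. Each resulting term is controlled by $C\delta(s)$ times $\mathcal Q$ (or a power of it), using $\rho\le C\delta$ and $|q|\le C$. This yields
\begin{equation*}
  \frac{d}{ds}\log\mathcal Q\le C\delta(s),
\end{equation*}
and Lemma \ref{lem:delta-decay-alpha} then gives $\mathcal Q(s)\le C$ for $s\ge s_*$.

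\textbf{Step 3: from $\mathcal Q$ to the entropy.} For any admissible $z$, the Hölder/Blaschke–Santaló-type inequality of \cite[\S4]{And00}, applied to the doubled body, relates $\int u_z^{1-1/\alpha}$ to $\int K^{\alpha-1}$ and the volume, up to dimensional constants. A bound on $\int K^{\alpha-1}$ together with $|\widehat\Omega_s|=V_+$ therefore bounds $\int u_z^{1-1/\alpha}$ from below. Since $\frac{\alpha}{\alpha-1}<0$, this bounds $\mathcal E_{\alpha,+}(\widehat\Omega_s,z)$ from above uniformly in $z$, which gives \eqref{eq:subunit-entropy-bound-alpha}.

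\textbf{Main obstacle.} The hard step is the integration by parts in Step 2. For $\alpha<1$ the weight $K^{\alpha-1}$ gives a gradient term $(\alpha-1)K^{\alpha-2}\nabla K$ with an unfavourable sign, and a priori it is not controlled. We will try to absorb it into the good term from Step 1 using Cauchy–Schwarz with small weight $\rho$; that smallness is the only available gain. If this fails, we will run the argument on a finite time strip, where $K$ is bounded by the shifted Tso estimate, and only use the integrated form of the monotonicity.
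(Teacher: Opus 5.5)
There is a genuine gap, and it begins with a sign error. For $\alpha<1$, Andrews' argument controls $\int K^{\alpha-1}\,d\sigma$ from \emph{below}, not from above.

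Take the model case $\Theta\equiv1$, with $G=K^\alpha$ and $I(s)=\int_{\Sph^n_+}K^{\alpha-1}\,d\sigma=\int_Y G\,d\mu$. Differentiate along \eqref{eq:support-flow-alpha} and integrate by parts; the boundary terms vanish since $B_{\beta\eta}=0$ and $\overline\nabla_\eta G=0$ on the equator. Using the (doubled) Minkowski--Reilly inequality and $(n+1)|\widehat\Omega_s|=|\Sph^n_+|$, this gives
\begin{equation*}
\frac{dI}{ds}=(1-\alpha)\Big[nI-\frac{|\Sph^n_+|}{I}\Big(\int_Y HG^2\,d\mu-\int_Y b^{ij}\nabla_iG\nabla_jG\,d\mu\Big)\Big]\ge0 .
\end{equation*}
This is why the paper's $\mathcal Z_\alpha=|\Omega_t|^{\frac n{n+1}}I_\alpha^{\frac1{\alpha-1}}$ carries the negative exponent $\frac1{\alpha-1}$ and is nonincreasing. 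Your $\mathcal Q$, with exponent $\frac1{1-\alpha}>0$, is therefore nondecreasing in the model case, so the target $\frac{d}{ds}\log\mathcal Q\le C\delta$ has the wrong sign.

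Worse, an upper bound on $\int K^{\alpha-1}\,d\sigma$ carries no information. At fixed volume, flattening half-ellipsoids have $\int K^{\alpha-1}\,d\sigma\to0$ for $\frac1{n+2}<\alpha<1$ while their entropy blows up, so Step~3 as stated is false. The H\"older inequality that does hold uses exponents $\frac1{1-\alpha}$ and $\frac1\alpha$, together with $\frac1{|\Sph^n_+|}\int_{\Sph^n_+}u_zK^{-1}\,d\sigma=1$ for $z\in\PiH$:
\begin{equation*}
\frac1{|\Sph^n_+|}\int_{\Sph^n_+}K^{\alpha-1}\,d\sigma\le\Big(\frac1{|\Sph^n_+|}\int_{\Sph^n_+}u_z^{1-\frac1\alpha}\,d\sigma\Big)^{\alpha}.
\end{equation*}
That is, $\cE_{\alpha,+}(\widehat\Omega_s,z)\le\frac1{\alpha-1}\log\big(\frac1{|\Sph^n_+|}\int_{\Sph^n_+}K^{\alpha-1}\,d\sigma\big)$. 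What you need is a uniform \emph{lower} bound on $\int K^{\alpha-1}\,d\sigma$.

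The second, more serious gap is Step~2, which carries all the difficulty and which you leave open. On the normalized half-space flow the speed contains $\det(\Id+\rho qB)^\alpha$, which is not a polynomial in $B$ for non-integer $\alpha$. After integration by parts the error involves $\rho\,\overline\nabla B$, with no curvature bound available --- exactly the circularity the paper warns about. Your fallback is circular too: the shifted Tso bound (Lemma~\ref{prop:F-upper-alpha}) uses Corollary~\ref{cor:normalized-radius-bounds}, which for $\alpha<1$ rests, through Lemma~\ref{lem:unit-strip-transport-C0}, on the very proposition being proved.

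The paper avoids the perturbation entirely. It proves \emph{exact} monotonicity of $\mathcal Z_\alpha$ for the original flow in the ball, where the speed is exactly $K^\alpha$ (Lemma~\ref{prop:FB-Z-monotonicity}). Since reflection is unavailable there, the free-boundary Minkowski--Reilly inequality (Lemma~\ref{lem:FB-Minkowski-Reilly}) is proved through a Neumann problem on $\Omega_t$. Its edge term vanishes and its spherical face contributes $\int_\Gamma|\nabla^\Gamma u|^2\ge0$. The extra boundary term $\alpha\int_{\partial M_t}G^2/h_{\mu\mu}$, coming from $\nabla_\mu G=G$, has the good sign.

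Only a \emph{bound} is then transferred to the doubled half-space body. This uses the uniform comparisons $c\bar A\le A_Z\le C\bar A$ (from $N(w)\le0$ and $\bar A\ge c_1\Id$) and comparable volumes and area elements, so no $\delta(s)$-error analysis is needed. The paper concludes with Andrews' ratio estimate \cite[Theorem~4]{And00}, a diameter bound and $u_z\le R_*$. With the sign corrected, your H\"older inequality above would shortcut that last step.
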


To prove this proposition, we need a free boundary analogue of the classical Minkowski--Reilly inequality.
\begin{lemma}
	\label{lem:FB-Minkowski-Reilly}
    Let $M$ be a strictly convex free-boundary hypersurface in $\overline{\B}^{n+1}$ whose boundary satisfies $\partial M\subset\mathbb{S}^n$.  Let $\Omega$ be the domain enclosed by $M$ and $\mathbb{S}^n$, and let $\Gamma\subset\mathbb{S}^{n}$ be the domain enclosed by $\partial M$.
    Let $h_{ij}$ be the second fundamental form of $M$ with respect to the outward normal $\nu$, let $b^{ij}$ be its
	inverse, and let $H=\tr h$. Then, for every smooth function $\varphi$ on $M$,
	\begin{equation}\label{eq:FB-Minkowski-Reilly}
		\int_M H\varphi^2\,\dd \mu
		-\int_M b^{ij}\nabla_i\varphi\nabla_j\varphi\,\dd \mu
		\le
		\frac n{n+1}
		\frac{\left(\int_M\varphi\,\dd \mu\right)^2}{|\Omega|} .
	\end{equation}
\end{lemma}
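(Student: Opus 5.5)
The plan is to adapt Reilly's proof of the classical Minkowski--Reilly inequality to the domain $\Omega$. Its boundary $\partial\Omega=M\cup\ol\Gamma$ consists of two smooth faces meeting orthogonally along $\partial M$. I take $\nu$ to be the outward normal on $M$ and $X$ the outward normal on $\Gamma$. Set
\begin{equation*}
c=\frac{\int_M\varphi\,\dd\mu}{|\Omega|},
\end{equation*}
and let $f$ solve the mixed Neumann problem
\begin{equation*}
\Delta f=c\ \text{in }\Omega,\qquad \nabla_\nu f=\varphi\ \text{on }M,\qquad \nabla_X f=0\ \text{on }\Gamma.
\end{equation*}
The compatibility condition $\int_\Omega\Delta f=\int_{\partial\Omega}\partial_{\mathrm{out}}f$ is exactly the choice of $c$, so $f$ exists and is unique up to an additive constant.

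Next I apply the Reilly formula on each face, with $D^2f$ the Euclidean Hessian, $\Delta_{\partial}$ and $\nabla$ the intrinsic Laplacian and gradient of the face, $\partial_{\mathrm{out}}f$ the outward normal derivative, and $\mathrm{II}$, $H_{\partial}$ the second fundamental form and mean curvature of the face:
\begin{equation*}
\int_\Omega\big((\Delta f)^2-|D^2f|^2\big)=\int_{\partial\Omega}\Big(2\,\partial_{\mathrm{out}}f\,\Delta_{\partial}f+H_{\partial}(\partial_{\mathrm{out}}f)^2+\mathrm{II}(\nabla f,\nabla f)\Big).
\end{equation*}
On $\Gamma$ the conditions $\nabla_X f=0$ and $\mathrm{II}_{\Sph^n}=g$ reduce the integrand to $|\nabla f|^2\ge0$. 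On $M$ the integrand is $2\varphi\Delta_M f+H\varphi^2+h(\nabla f,\nabla f)$. I integrate $\int_M2\varphi\Delta_Mf$ by parts. The resulting boundary term is $\int_{\partial M}2\varphi\nabla_\mu f$. Since $\mu=X$ along $\partial M$ by Lemma~\ref{lem:bd-identities-alpha}(i), this equals $\int_{\partial M}2\varphi\nabla_Xf$, which vanishes by the Neumann condition on $\Gamma$. This cancellation is the sole use of orthogonality. What remains on $M$ is
\begin{equation*}
\int_M\Big(H\varphi^2+h(\nabla f,\nabla f)-2\langle\nabla\varphi,\nabla f\rangle\Big).
\end{equation*}
Strict convexity gives the pointwise inequality $h(\nabla f,\nabla f)-2\langle\nabla\varphi,\nabla f\rangle\ge -b^{ij}\nabla_i\varphi\nabla_j\varphi$, by completing the square in the $h$-metric. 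Hence the right-hand side of Reilly is at least
\begin{equation*}
\int_MH\varphi^2-\int_Mb^{ij}\nabla_i\varphi\nabla_j\varphi.
\end{equation*}
For the left-hand side I use $|D^2f|^2\ge(\Delta f)^2/(n+1)$, which gives
\begin{equation*}
\int_\Omega\big((\Delta f)^2-|D^2f|^2\big)\le\frac n{n+1}c^2|\Omega|=\frac n{n+1}\frac{\left(\int_M\varphi\,\dd\mu\right)^2}{|\Omega|},
\end{equation*}
which is \eqref{eq:FB-Minkowski-Reilly}.

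The main obstacle is regularity at the edge $\partial M$. The Reilly identity needs $f\in C^2(\ol\Omega)$, or at least enough regularity to justify the boundary integrations. A mixed Neumann problem on a domain with a right-angle corner need not be smooth there unless the data satisfy compatibility conditions. I would handle this in two steps. First, near the edge I flatten $\Gamma$ via the Cayley map $F^{-1}$ of Section~\ref{sec:normalization}; this turns the problem into one with a flat Neumann face meeting $M$ orthogonally, where even reflection gives a Neumann problem on a domain with smooth boundary. Alternatively, I would cite known $W^{2,p}$/$H^2$ regularity for Neumann problems in convex domains with orthogonal edges. Second, I would approximate $f$ by smooth functions and $\Omega$ by interior smoothings, and pass to the limit. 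Every boundary term above needs only $H^2$ traces of $f$, so I expect $f\in H^2(\Omega)$ to suffice.
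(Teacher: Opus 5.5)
Your proposal follows the paper's proof essentially step for step: the same mixed Neumann problem with $c=|\Omega|^{-1}\int_M\varphi$, the same face-by-face Reilly identity, the nonnegative contribution $\int_\Gamma|\nabla^\Gamma f|^2$ from the spherical face, and the vanishing of $\int_{\partial M}\varphi f_\mu$ via $\mu=X$. It then uses the same completion of the square with $b^{ij}$, the same trace inequality $|D^2f|^2\ge(\Delta f)^2/(n+1)$, and $H^2(\Omega)$ regularity at the right-angle edge, for which the paper cites Dauge. One point to tighten: a Reilly formula on a piecewise smooth boundary carries an edge term $-\int_{\partial M}(f_\nu f_\mu+f_X f_{\mu'})$, which the paper writes out explicitly and which vanishes by the same facts $\mu=X$ and $f_X=0$; so orthogonality is used there as well, not only in the cancellation you single out.
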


\begin{proof}
Set
\begin{equation*}
    c=\frac{1}{|\Omega|}\int_M\varphi\,\dd\mu.
\end{equation*}
Consider the Neumann problem
\begin{equation}\label{eq:MR-Neumann-problem}
    \begin{cases}
        \Delta u=c, & \text{in }\Omega,\\
        \partial_\nu u=\varphi, & \text{on }M,\\
        \partial_{\nu'}u=0, & \text{on }\Gamma,
    \end{cases}
\end{equation}
where $\nu$ is the outward unit normal of $M$ and
$\nu'=X$ is the outward unit normal of the unit sphere.
The definition of $c$ gives the compatibility condition
\begin{equation*}
    c|\Omega|
    =
    \int_M\varphi\,\dd\mu
    =
    \int_{\partial\Omega}\partial_{\nu_{\partial\Omega}}u
    \,\dd\mu_{\partial\Omega}.
\end{equation*}
Thus \eqref{eq:MR-Neumann-problem} has a weak solution, unique up
to an additive constant. We normalize it by
\begin{equation*}
    \int_\Omega u\,\dd x=0.
\end{equation*}

The boundary of $\Omega$ consists of two smooth faces meeting
orthogonally along the edge $\partial M$. In
local smooth coordinates, the model domain near the edge is a
dihedral domain with opening angle $\pi/2$. Since the Neumann
data $\varphi$ on $M$ and $0$ on $\Gamma$ are smooth and satisfy
the compatibility condition above, the regularity theory for the
Neumann problem on domains with codimension-two edges gives $u\in H^2(\Omega)$. 
See \cite[Section~23.C, Theorem~23.3, and
Example~23.11(1), pp.~196--198]{Dauge1988}.
This regularity is sufficient to justify the integrations by
parts below, for instance by approximation in $H^2(\Omega)$.

We now derive the Reilly identity, retaining the edge
contribution explicitly. Let
\begin{equation*}
    V^i=(\Delta u)u_i-u_{ij}u_j.
\end{equation*}
A direct calculation gives
\begin{equation*}
    \partial_iV^i=(\Delta u)^2-|D^2u|^2.
\end{equation*}
Let $\mu$ and $\mu'$ denote the outward unit conormals of
$\partial M$ in $M$ and $\Gamma$, respectively. Applying the
divergence theorem to $V$ and then integrating tangentially on
each smooth face gives
\begin{align}
    \int_\Omega\big((\Delta u)^2-|D^2u|^2\big)\,\dd x
    ={}&
    \int_M
    \left(
        Hu_\nu^2
        +2u_\nu\Delta_Mu
        +h(\nabla^Mu,\nabla^Mu)
    \right)\,\dd\mu
    \nonumber\\
    &+
    \int_\Gamma
    \left(
        H_\Gamma u_{\nu'}^2
        +2u_{\nu'}\Delta_\Gamma u
        +h_\Gamma(\nabla^\Gamma u,\nabla^\Gamma u)
    \right)\,\dd\mu_\Gamma
    \nonumber\\
    &-
    \int_{\partial M}
    \left(
        u_\nu u_\mu
        +u_{\nu'}u_{\mu'}
    \right)\,\dd S.
    \label{eq:Reilly-with-edge}
\end{align}
Indeed, the last line is precisely the boundary contribution
produced when the tangential integration by parts is performed
separately on $M$ and $\Gamma$.

We now show that the edge term in
\eqref{eq:Reilly-with-edge} vanishes. Along $\partial M$, the
free-boundary condition implies $\mu=\nu'$.  
Since $u_{\nu'}=0$ on $\Gamma$, we have
\begin{equation*}
    u_\mu=u_{\nu'}=0
    \qquad\text{on }\partial M.
\end{equation*}
Therefore the first term in the edge integral vanishes. The
second term also vanishes because $u_{\nu'}=0$. Hence there is no
corner contribution.

On the spherical face $\Gamma$, we have $ h_\Gamma=g_\Gamma$ and $H_\Gamma=n$. 
Since $u_{\nu'}=0$, its contribution to
\eqref{eq:Reilly-with-edge} is
\begin{equation*}
    \int_\Gamma|\nabla^\Gamma u|^2\,\dd\mu_\Gamma\geq0.
\end{equation*}
Using $u_\nu=\varphi$ on $M$, we consequently obtain
\begin{align}
    \int_\Omega\big((\Delta u)^2-|D^2u|^2\big)\,\dd x
    ={}&
    \int_M
    \left(
        H\varphi^2
        +2\varphi\Delta_Mu
        +h(\nabla^Mu,\nabla^Mu)
    \right)\,\dd\mu
    \nonumber\\
    &+
    \int_\Gamma|\nabla^\Gamma u|^2\,\dd\mu_\Gamma
    \nonumber\\
    \geq{}&
    \int_M
    \left(
        H\varphi^2
        +2\varphi\Delta_Mu
        +h(\nabla^Mu,\nabla^Mu)
    \right)\,\dd\mu.
    \label{eq:Reilly-piecewise}
\end{align}

We next integrate by parts on $M$. Since $\mu=\nu'$ along
$\partial M$ and $u_{\nu'}=0$ on $\Gamma$, we have
$u_\mu=0$ along $\partial M$. It follows that
\begin{align*}
    2\int_M\varphi\Delta_Mu\,\dd\mu
    &=
    -2\int_M
    \langle\nabla\varphi,\nabla^Mu\rangle\,\dd\mu
    +
    2\int_{\partial M}\varphi u_\mu\,\dd S\\
    &=
    -2\int_M
    \langle\nabla\varphi,\nabla^Mu\rangle\,\dd\mu.
\end{align*}
Completing the square with respect to the positive definite
second fundamental form $h$ gives
\begin{align*}
    &h(\nabla^Mu,\nabla^Mu)
    -2\langle\nabla\varphi,\nabla^Mu\rangle\\
    &\qquad=
    -b^{ij}\nabla_i\varphi\nabla_j\varphi\\
    &\qquad\quad+
    h_{ij}
    \left(
        \nabla^iu-b^{ik}\nabla_k\varphi
    \right)
    \left(
        \nabla^ju-b^{j\ell}\nabla_\ell\varphi
    \right)\\
    &\qquad\geq
    -b^{ij}\nabla_i\varphi\nabla_j\varphi.
\end{align*}
Substituting this estimate into
\eqref{eq:Reilly-piecewise}, we obtain
\begin{equation}\label{eq:Reilly-lower-bound}
    \int_\Omega
    \big((\Delta u)^2-|D^2u|^2\big)\,\dd x
    \geq
    \int_MH\varphi^2\,\dd\mu
    -
    \int_M
    b^{ij}\nabla_i\varphi\nabla_j\varphi\,\dd\mu.
\end{equation}

On the other hand, the pointwise inequality
\begin{equation*}
    |D^2u|^2\geq\frac{1}{n+1}(\Delta u)^2
\end{equation*}
and the equation $\Delta u=c$ imply
\begin{align*}
    \int_\Omega
    \big((\Delta u)^2-|D^2u|^2\big)\,\dd x
    &\leq
    \frac{n}{n+1}
    \int_\Omega(\Delta u)^2\,\dd x\\
    &=
    \frac{n}{n+1}c^2|\Omega|\\
    &=
    \frac{n}{n+1}
    \frac{
        \left(\int_M\varphi\,\dd\mu\right)^2
    }{|\Omega|}.
\end{align*}
Combining this inequality with
\eqref{eq:Reilly-lower-bound} proves
\eqref{eq:FB-Minkowski-Reilly}.
\end{proof}

With the above inequality, we show that the Andrews-type quantity \cite[\S 4]{And00} is nonincreasing for $0<\alpha<1$.

\begin{lemma}	\label{prop:FB-Z-monotonicity}
	Let $M_t$ be a smooth strictly convex free-boundary solution of
	the flow \eqref{eq:flow} in the unit ball, and set
	\begin{equation*}
		I_\alpha(t)=\int_{M_t}K^\alpha\,\dd \mu_t .
	\end{equation*}
	If $0<\alpha<1$, then
	\begin{equation}\label{s5.And}
		\mathcal Z_\alpha(t)
		=|\Omega_t|^{\frac n{n+1}}I_\alpha(t)^{\frac1{\alpha-1}}
	\end{equation}
	is nonincreasing.
\end{lemma}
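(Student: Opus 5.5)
We will differentiate $\log\mathcal Z_\alpha$ in time and reduce the sign of the derivative to the free-boundary Minkowski--Reilly inequality of Lemma \ref{lem:FB-Minkowski-Reilly}, applied with $\varphi=K^\alpha$.

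\emph{Volume.} Since the spherical face only shrinks within the fixed sphere and the curved face moves with normal speed $-K^\alpha$, we have $\frac{d}{dt}|\Omega_t|=-\int_{M_t}K^\alpha\,\dd\mu_t=-I_\alpha$.

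\emph{The integral $I_\alpha$.} The measure $\dd\mu_t$ evolves by \eqref{eq:dmu}, and the boundary $\partial M_t$ moves tangentially along $\Sph^n$. In the parametrization of \eqref{eq:flow} the velocity is purely normal, so no boundary flux term appears. We get
\begin{equation*}
I_\alpha'=\int_{M_t}\big(\partial_t G-HG^2\big)\dd\mu_t,\qquad \partial_tG=\dot G^{ij}\nabla_i\nabla_jG+G\dot G^{ij}(h^2)_{ij}.
\end{equation*}
Using $\dot G^{ij}=\alpha K^\alpha b^{ij}$ and $\dot G^{ij}(h^2)_{ij}=\alpha K^\alpha H$, this becomes
\begin{equation*}
I_\alpha'=\int_{M_t}\alpha K^\alpha b^{ij}\nabla_i\nabla_jK^\alpha\,\dd\mu_t+(\alpha-1)\int_{M_t}HK^{2\alpha}\,\dd\mu_t.
\end{equation*}

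\emph{Integration by parts.} The tensor $K b^{ij}$ is divergence free by Codazzi (it is the cofactor matrix), so
\begin{equation*}
K^\alpha b^{ij}\nabla_i\nabla_j K^\alpha=\nabla_i\big(K^{\alpha-1}Kb^{ij}\nabla_jK^\alpha\big)-\nabla_iK^{\alpha-1}\,Kb^{ij}\nabla_jK^\alpha,
\end{equation*}
and the last term equals $\frac{\alpha-1}{\alpha}b^{ij}\nabla_iK^\alpha\nabla_jK^\alpha$. The divergence term yields the boundary integral
\begin{equation*}
\int_{\partial M_t}\alpha K^\alpha b^{\mu\mu}\nabla_\mu K^\alpha\,\dd S.
\end{equation*}
Here $b^{\mu j}=b^{\mu\mu}\delta^{\mu j}$ by Lemma \ref{lem:bd-identities-alpha}(ii), and $\nabla_\mu K^\alpha=K^\alpha$ by \eqref{eq:bd-G-alpha}. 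Hence this boundary term is $\int_{\partial M_t}\alpha K^{2\alpha}h_{\mu\mu}^{-1}\dd S\ge0$. Collecting terms,
\begin{equation*}
I_\alpha'=(1-\alpha)\Big[\int b^{ij}\nabla_iK^\alpha\nabla_jK^\alpha-\int HK^{2\alpha}\Big]+\int_{\partial M_t}\alpha K^{2\alpha}h_{\mu\mu}^{-1}\,\dd S.
\end{equation*}

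\emph{The sign.} Applying Lemma \ref{lem:FB-Minkowski-Reilly} with $\varphi=K^\alpha$ gives
\begin{equation*}
\int b^{ij}\nabla_iK^\alpha\nabla_jK^\alpha-\int HK^{2\alpha}\ge-\frac{n}{n+1}\frac{I_\alpha^2}{|\Omega_t|}.
\end{equation*}
Since $1-\alpha>0$, this yields
\begin{equation*}
I_\alpha'\ge-(1-\alpha)\frac{n}{n+1}\frac{I_\alpha^2}{|\Omega_t|},
\end{equation*}
with the boundary term discarded because it is nonnegative. Therefore
\begin{equation*}
\frac{d}{dt}\log\mathcal Z_\alpha=-\frac{n}{n+1}\frac{I_\alpha}{|\Omega_t|}+\frac{1}{\alpha-1}\frac{I_\alpha'}{I_\alpha}\le-\frac{n}{n+1}\frac{I_\alpha}{|\Omega_t|}+\frac{n}{n+1}\frac{I_\alpha}{|\Omega_t|}=0.
\end{equation*}
The inequality uses $\frac{1}{\alpha-1}<0$, which reverses the lower bound on $I_\alpha'$.

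\emph{Expected difficulty.} The main obstacle is the boundary bookkeeping. First, the evolution of $I_\alpha$ must carry no hidden boundary flux. Second, the sign of the integration-by-parts boundary term must be checked, which relies on $h_{\mu\gamma}=0$ and $\nabla_\mu G=G$. If the parametrization is not purely normal at the boundary, I would instead compute with a tangentially adjusted parametrization preserving $\partial M$, as in Proposition \ref{prop:pulled-back-alpha}; the integral is independent of parametrization.
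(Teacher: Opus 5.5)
Your proposal is correct and follows the paper's proof essentially step for step. Both arguments use the volume variation $\frac{d}{dt}|\Omega_t|=-I_\alpha$, the evolution of $I_\alpha$, integration by parts with the divergence-free cofactor tensor $Kb^{ij}$ (yielding the nonnegative boundary term $\alpha\int_{\partial M_t}K^{2\alpha}h_{\mu\mu}^{-1}$ via $h_{\mu\gamma}=0$ and $\nabla_\mu G=G$), and the free-boundary Minkowski--Reilly inequality with $\varphi=K^\alpha$, with the sign reversal from $\frac{1}{\alpha-1}<0$ handled exactly as in the paper.
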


\begin{proof}
	Since the spherical part of $\partial\Omega_t$ moves tangentially to the support sphere, only $M_t$ contributes to the first variation of volume, and hence
	\begin{equation}\label{eq:FB-volume-variation-subunit}
		\dfrac{\mathrm{d}}{\mathrm{d}t}|\Omega_t|=-\int_{M_t} K^\alpha \,\dd \mu_t=-I_\alpha(t).
	\end{equation}
	Along the flow \eqref{eq:flow}, using equations \eqref{eq:evol-G} and \eqref{eq:dmu} gives
	\begin{equation}\label{eq:I-alpha-variation-before-ibp}
		\frac{\dd}{\dd t}I_\alpha(t)
		=\alpha\int_{M_t}K^\alpha b^{ij}\nabla_i\nabla_j G\,\dd \mu_t
		+(\alpha-1)\int_{M_t}H G^2\,\dd \mu_t .
	\end{equation}
	Since $K b^{ij}=\dot{K}^{ij}$ is divergence-free, integration by parts gives
	\begin{equation}\label{eq:Kalpha-second-term-ibp}
		\begin{split}
			\alpha\int_{M_t} K^\alpha b^{ij}\nabla_i\nabla_j G\,\dd \mu_t
			&=-(\alpha-1)\int_{M_t} b^{ij}\nabla_i G\nabla_j G\,\dd \mu_t  \\
			&\quad +\alpha\int_{\partial M_t}K^\alpha b^{\mu j}\nabla_j G\,\dd s_t.
		\end{split}
	\end{equation}
	Along the free boundary $\partial M_t$, by Lemma \ref{lem:bd-identities-alpha}   $h_{\mu\beta}=0$ for directions tangent to $\partial M_t$ and $\nabla_\mu G=G$.  Therefore
	\begin{equation}\label{eq:FB-Z-boundary-positive}
		K^\alpha b^{\mu j}\nabla_j G
		=G b^{\mu\mu}\nabla_\mu G
		=\frac{G^2}{h_{\mu\mu}},
	\end{equation}
	and the boundary term in \eqref{eq:Kalpha-second-term-ibp} is nonnegative.  Combining \eqref{eq:I-alpha-variation-before-ibp}--\eqref{eq:FB-Z-boundary-positive}, 	we obtain 
	\begin{align*}
		\dfrac{\mathrm{d}}{\mathrm{d}t}I_\alpha(t)
		=&-(1-\alpha)\left(
		\int_{M_t}H G^2\,\dd \mu_t
		-\int_{M_t}b^{ij}\nabla_i G\nabla_j G\,\dd \mu_t\right)\\
		&\qquad +\alpha\int_{\partial M_t}\frac{G^2}{h_{\mu\mu}}\,\dd s_t.
	\end{align*}
	Applying Lemma \ref{lem:FB-Minkowski-Reilly} with $\varphi=G$ gives
	\begin{equation*}
		\dfrac{\mathrm{d}}{\mathrm{d}t}I_\alpha(t)
		\ge -(1-\alpha)\frac n{n+1}\frac{I_\alpha(t)^2}{|\Omega_t|}.
	\end{equation*}
	Together with \eqref{eq:FB-volume-variation-subunit}, this implies
	\begin{align*}
		\frac{\dd}{\dd t}\log\mathcal Z_\alpha(t)
		=& \frac n{n+1}\frac{\frac{d}{\mathrm{d}t}|\Omega_t|}{|\Omega_t|}
		+\frac1{\alpha-1}\frac{\frac{\mathrm{d}}{\mathrm{d}t}I_{\alpha}(t)}{I_\alpha(t)} \\
		=&-\frac n{n+1}\frac{I_\alpha(t)}{|\Omega_t|}
		+\frac1{\alpha-1}\frac{\frac{\mathrm{d}}{\mathrm{d}t}I_\alpha(t)}{I_\alpha(t)} \leq 0,
	\end{align*}
	and the assertion follows.
\end{proof}

We are now in a position to prove the entropy bound.

\begin{proof}[Proof of Proposition \ref{prop:subunit-entropy-bound-alpha}]
 By Lemma \ref{prop:FB-Z-monotonicity}, after fixing any sufficiently late time $t_0<T$,
	\begin{equation*}
		\mathcal Z_\alpha(t)
		\le \mathcal Z_\alpha(t_0),
		\qquad t\in[t_0,T).
	\end{equation*}
	
	Let $(\Omega_t^{\HH})^d$ be the double body obtained by even reflection of the unnormalized half-space body across $\PiH$.  Since $M_t$ converges to the extinction point $p=-E$ and $F^{-1}$ has nonsingular differential near $p$, increasing $t_0$ if necessary gives the volume comparison
	\begin{equation}\label{eq:subunit-volume-comparison}
		c|\Omega_t|\le |(\Omega_t^{\HH})^d|\le C|\Omega_t|.
	\end{equation}
	Recall the notations in Section \ref{sec:normalization}.  We claim that we have the curvature comparison
	\begin{equation}\label{eq:AZ-barA-comparison-subunit}
		c\bar A\le A_Z\le C\bar A
		\qquad\hbox{on }\Sigma_t,
		\quad t\in[t_0,T).
	\end{equation}
In fact, by Lemma \ref{lem:large-s-gauss-alpha}, we have $N(w)\leq0$. The conformal transformation
formula \eqref{eq:conformal-A-alpha} can be rewritten as
\begin{equation*}
	A_Z=e^w\bar A-N(w)\Id.
\end{equation*}
Since $e^w$ is uniformly bounded from above and below on the relevant
neighborhood of the origin, it follows that $A_Z\geq e^w\bar A\geq c\bar A$.  
On the other hand, $|N(w)|\leq C$ and Lemma \ref{lem:kappa-lower-alpha} gives
$\bar A\geq c_1\Id$. Hence
\begin{equation*}
	A_Z
	\leq e^w\bar A+|N(w)|\Id
	\leq C\bar A.
\end{equation*}
This proves \eqref{eq:AZ-barA-comparison-subunit}. Since the area
elements of the Euclidean and conformal metrics are also uniformly
comparable, \eqref{eq:AZ-barA-comparison-subunit} gives
	\begin{equation}\label{eq:subunit-curvature-integral-comparison}
		cI_\alpha(t)
		\le \int_{\Sigma_t}K_Z^\alpha\,\dd \mu_Z
		\le CI_\alpha(t).
	\end{equation}
	The reflected half body contributes the same integral. Therefore, the volume and curvature comparisons \eqref{eq:subunit-volume-comparison} and
	\eqref{eq:subunit-curvature-integral-comparison} imply that the Andrews-type quantity \eqref{s5.And} of $(\Omega_t^{\HH})^d$ is bounded above for all $ t\in[t_0,T)$.  
	Since $\alpha>\frac{1}{n+2}$, after passing through a smooth even approximation, Andrews' estimate \cite[Theorem 4]{And00} therefore yields 
	\begin{equation}\label{eq:subunit-radius-ratio-double}
		\frac{r_+((\Omega_t^{\HH})^d)}{r_-((\Omega_t^{\HH})^d)}\le  C,
		\qquad t\in[t_0,T).
	\end{equation}
	Now set
	\begin{equation*}
		(\widehat\Omega_s)^d=\rho(s)^{-1}(\Omega_t^{\HH})^d,
		\qquad s=s(t).
	\end{equation*}
	The ratio \eqref{eq:subunit-radius-ratio-double} is scale invariant, and
	$|(\widehat \Omega_s)^d|=2V_+=|\B^{n+1}|$.  A fixed volume together with a fixed inradius--circumradius ratio gives
	\begin{equation*}
		\operatorname{diam} (\widehat \Omega_s)^d\le R_*,
		\qquad s\ge s_*:=s(t_0),
	\end{equation*}
	for some finite $R_*$ depending only on $n,\alpha$ and the initial
	hypersurface.  Replacing $R_*$ by $\max\{R_*,1\}$, we assume $R_*\ge1$.
	
	The diameter bound directly controls the entropy. Let $z\in\PiH$ be any admissible center for the half-body $\widehat\Omega_s$, and write
	\begin{equation*}
		u_z(\xi)=h(\xi,s)-\langle z,\xi\rangle .
	\end{equation*}
	The support function of the double $(\widehat \Omega_s)^d$ is obtained by even reflection. Since $z\in\PiH$, the condition $u_z>0$ on $\Sph^n_+$ implies
	that $z\in\Int (\widehat \Omega_s)^d$.
    Therefore, for every $\xi\in\Sph^n_+$,
	\begin{equation}\label{eq:subunit-support-diameter-control}
		0<u_z(\xi)
		=\sup_{x\in\widehat\Omega_s}\langle x-z,\xi\rangle
		\le \operatorname{diam} (\widehat \Omega_s)^d
		\le R_* .
	\end{equation}
	Since $1-\frac1\alpha<0$, \eqref{eq:subunit-support-diameter-control} yields
	\begin{equation*}
		\begin{split}
			\cE_{\alpha,+}(\widehat\Omega_s,z)
			&=\frac{\alpha}{\alpha-1}\log\left( \dfrac{1}{|\mathbb{S}_+^n|}\int_{\mathbb{S}_+^n} u_z^{1-\frac1\alpha} \mathrm{d}\sigma  \right)  \\
			&\le \frac{\alpha}{\alpha-1}\log(R_*^{1-\frac1\alpha})
			=\log R_* .
		\end{split}
	\end{equation*}
	This bound is uniform in the admissible center $z$.  Taking the supremum over all admissible $z\in\PiH$ proves \eqref{eq:subunit-entropy-bound-alpha}.
\end{proof}

The entropy bound and the static entropy geometry give the following control on the support function relative to the entropy point.

\begin{corollary}
	\label{cor-bounded}
	Assume $\frac{1}{n+2} < \alpha <1$.  Then there are constants $s_*<\infty$ and $C<\infty$, depending only on
	$n,\alpha$ and the initial hypersurface, such that
	\begin{equation*}
		\frac{1}{C} \leq u_{z_e(s)}\leq C , \quad |\overline{\nabla}u_{z_e(s)}|_{\sigma} \leq C,
		\qquad s\ge s_*.
	\end{equation*}
    Here $z_e$ is the entropy point, and $u_{z_e}=h-\langle z_e,\xi \rangle$.
\end{corollary}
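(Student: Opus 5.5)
The plan is to combine Proposition \ref{prop:subunit-entropy-bound-alpha} with the static entropy geometry of Proposition \ref{prop:half-entropy-geometry}, applied to the normalized half-body $\widehat\Omega_s$. That body has volume $V_+$ for every $s$, so Proposition \ref{prop:half-entropy-geometry} applies uniformly in $s$. By Proposition \ref{prop:subunit-entropy-bound-alpha} there is $L<\infty$ with $\cE_{\alpha,+}(\widehat\Omega_s)\le L$ for $s\ge s_*$. By Proposition \ref{prop:half-entropy-geometry}(ii) we also have $\cE_{\alpha,+}(\widehat\Omega_s)\ge0$. Hence the entropy lies in $[0,L]$, and every constant below depends only on $n,\alpha,L$.

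\emph{Upper bounds.} By \eqref{eq:entropy-radius-control}, $r_+(\widehat\Omega_s)\le Ce^{nbL}$, so $\operatorname{diam}(\widehat\Omega_s)^d\le C$. Since $z_e(s)$ lies in the flat part of $\widehat\Omega_s$, the argument of \eqref{eq:subunit-support-diameter-control} gives
\begin{equation*}
  u_{z_e}(\xi)=\sup_{x\in\widehat\Omega_s}\langle x-z_e,\xi\rangle\le\operatorname{diam}(\widehat\Omega_s)^d\le C .
\end{equation*}
For the gradient, write $\overline\nabla u_{z_e}+u_{z_e}\xi=Y-z_e$, using \eqref{s5.supp-h} with $h$ replaced by $u_{z_e}$. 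Then
\begin{equation*}
  |\overline\nabla u_{z_e}|_\sigma^2+u_{z_e}^2=|Y-z_e|^2\le(\operatorname{diam}(\widehat\Omega_s)^d)^2\le C ,
\end{equation*}
because both $Y$ and $z_e$ lie in $\widehat\Omega_s$.

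\emph{Lower bound.} This is the only step with real content. By \eqref{eq:entropy-point-interior}, $\dist(z_e(s),\partial(\widehat\Omega_s)^d)\ge d_L>0$. So the ball $B_{d_L}(z_e)$ is contained in the doubled body. For $\xi\in\Sph^n_+$, the support function of the doubled body in direction $\xi$ equals $h(\xi)$, since the reflection is even. Therefore
\begin{equation*}
  u_{z_e}(\xi)=h(\xi)-\langle z_e,\xi\rangle\ge d_L .
\end{equation*}

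The main point to check is that the ingredients borrowed from \cite{AGN2016} in Proposition \ref{prop:half-entropy-geometry}, namely the uniform interior distance $d_L$ and the radius control, hold uniformly over all bodies with bounded entropy and fixed volume. This is exactly what the compactness argument there supplies, so the corollary follows with $C=\max\{C_{\rm diam},d_L^{-1}\}$ for $s\ge s_*$.
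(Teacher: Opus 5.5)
Your proposal is correct and follows essentially the same argument as the paper. The entropy bound of Proposition~\ref{prop:subunit-entropy-bound-alpha} is fed into Proposition~\ref{prop:half-entropy-geometry} to obtain an interior distance $d_L$ for $z_e(s)$ and a diameter bound for the doubled body, which give the two-sided bounds on $u_{z_e}$. The gradient bound then follows from $Y-z_e=\overline{\nabla}u_{z_e}+u_{z_e}\xi$, since both points lie in the body.
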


\begin{proof}
By Proposition \ref{prop:subunit-entropy-bound-alpha}, the entropy of the normalized half-body is
uniformly bounded from above. Hence Proposition \ref{prop:half-entropy-geometry}, applied to
the doubled body, gives constants $d_0>0$ and $R_0<\infty$ such
that
\begin{equation*}
    \dist\bigl(z_e(s),\partial(\widehat\Omega_s)^d\bigr)
    \geq d_0,
    \qquad
    \diam(\widehat\Omega_s)^d\leq R_0
\end{equation*}
for all sufficiently large $s$.

Since $u_{z_e}$ is the support function of
$(\widehat\Omega_s)^d-z_e(s)$ restricted to
$\mathbb S^n_+$, the first estimate implies
\begin{equation*}
    u_{z_e}(\xi,s)\geq d_0,
\end{equation*}
while the diameter bound gives
\begin{equation*}
    u_{z_e}(\xi,s)\leq R_0.
\end{equation*}
Moreover, in Gauss map coordinates,
\begin{equation*}
    Y-z_e(s)
    =
    \overline{\nabla} u_{z_e}+u_{z_e}\xi.
\end{equation*}
Since $z_e(s)\in(\widehat\Omega_s)^d$, we have
\begin{equation*}
    |Y-z_e(s)|\leq\diam(\widehat\Omega_s)^d\leq R_0.
\end{equation*}
Therefore,
\begin{equation*}
    |\overline{\nabla} u_{z_e}|_\sigma
    \leq |Y-z_e(s)|
    \leq R_0.
\end{equation*}
\end{proof}

The following lemma provides uniform $C^0$ and $C^1$ bounds for the transported functions on a unit time strip, which are needed in the range $\frac{1}{n+2}<\alpha<1$ of the almost monotonicity argument in Lemma \ref{lem:transported-entropy-alpha}.

\begin{lemma}
	\label{lem:unit-strip-transport-C0}
	Assume $\frac{1}{n+2} < \alpha <1$.  There are constants $c_0,C_0>0$ with the following property.  Let $S$ be large, for $S-1\le s\le S$,
	\begin{equation*}
		u^S(\xi,s)=h(\xi,s)-e^{s-S}\langle z_e(S), \xi \rangle.
	\end{equation*}
	Then
	\begin{equation*}
		c_0\le u^S(\xi,s)\le C_0,
		\qquad
		|\overline{\nabla} u^S(\xi,s)|\le C_0
	\end{equation*}
	for all $\xi\in\Sph^n_+$ and all $s\in[S-1,S]$.
\end{lemma}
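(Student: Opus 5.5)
The plan is to read $u^S$ as a support function of the doubled body at time $s$, centred at a point fixed in \emph{unnormalized} coordinates. Then the monotonicity of the unnormalized bodies (nestedness) moves the entropy-point information at the terminal time $S$ back to every $s\in[S-1,S]$. Concretely, let $t=t(s)$, $t_S=t(S)$, and set $P_S=\rho(S)z_e(S)\in\PiH$, the entropy point of $\widehat\Omega_S$ expressed in the unnormalized half-space. At normalized time $s$ this fixed point has coordinates $\rho(s)^{-1}P_S=e^{s-S}z_e(S)$. Hence $u^S(\cdot,s)$ is the restriction to $\Sph^n_+$ of the support function of $(\widehat\Omega_s)^d-e^{s-S}z_e(S)$, which is the even extension of $h(\cdot,s)$ shifted by a point of $\PiH$.

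\emph{Step 1 (nestedness).} By the proof of Lemma \ref{prop:finite-extinction-alpha}, $\Omega_{t_2}\subset\Omega_{t_1}$ for $t_1\le t_2$. The map $F^{-1}$ is a bijection from $\ol\B^{n+1}\setminus\{E\}$ onto $\HH$, so the half-space bodies satisfy $\Omega^{\HH}_{t_2}\subset\Omega^{\HH}_{t_1}$. Even reflection across $\PiH$ preserves inclusion, so the doubled bodies are nested as well. Since $s$ is increasing in $t$ by \eqref{equ-s}, for $s\le S$ we get $(\Omega^{\HH}_{t_S})^d\subset(\Omega^{\HH}_{t})^d$.

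\emph{Step 2 (lower bound).} The proof of Corollary \ref{cor-bounded}, via Propositions \ref{prop:subunit-entropy-bound-alpha} and \ref{prop:half-entropy-geometry}(iv), gives $d_0>0$ and $R_0<\infty$ with
\begin{equation*}
\dist\bigl(z_e(S),\partial(\widehat\Omega_S)^d\bigr)\ge d_0,
\qquad
\diam(\widehat\Omega_s)^d\le R_0
\qquad\text{for all large } s .
\end{equation*}
Scaling by $\rho(S)$, the ball $B_{\rho(S)d_0}(P_S)$ lies in $(\Omega^{\HH}_{t_S})^d$, and by Step 1 it lies in $(\Omega^{\HH}_{t})^d$. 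Rescaling by $\rho(s)^{-1}$, the ball of radius $e^{s-S}d_0\ge e^{-1}d_0$ about $e^{s-S}z_e(S)$ lies in $(\widehat\Omega_s)^d$. The support function of a convex body about an interior point is at least the radius of any ball about that point contained in the body. Therefore $u^S(\xi,s)\ge e^{-1}d_0=:c_0$.

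\emph{Step 3 (upper and gradient bounds).} Since $e^{s-S}z_e(S)\in(\widehat\Omega_s)^d$, we get $u^S(\xi,s)\le\diam(\widehat\Omega_s)^d\le R_0$. For the gradient, use Gauss map coordinates:
\begin{equation*}
Y(\xi,s)-e^{s-S}z_e(S)=\overline\nabla u^S+u^S\xi .
\end{equation*}
This gives $|\overline\nabla u^S|\le|Y-e^{s-S}z_e(S)|\le R_0$, exactly as in Corollary \ref{cor-bounded}. Take $C_0=R_0$ and require $S-1\ge s_*$.

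The only step needing care is Step 1. One must make sure that nestedness transfers through the conformal map and the doubling. One must also track that the interior ball is carried in unnormalized coordinates, where it is fixed, and only then rescaled. The rescaling shrinks the radius by at most a factor $e^{-1}$ on a unit time strip, which is why the lemma is stated for a strip of length one.
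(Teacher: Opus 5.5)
Your proof is correct and follows essentially the same route as the paper. The paper gets the lower bound from $u^S(\cdot,S)\ge c$ (Corollary \ref{cor-bounded}) together with the pointwise monotonicity $\partial_s(e^{-s}u^S)\le 0$, which is the support-function form of your nestedness-plus-interior-ball argument in Steps 1--2. For the upper and gradient bounds the paper uses the diameter bound with $|z_e(S)|\le R_*$ and the triangle inequality, giving $3R_*$; your Step 3 gets $R_0$ instead by noting that the transported point $e^{s-S}z_e(S)$ stays inside the body.
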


\begin{proof}
	At the terminal time, $u^S(\xi,S)=h(\xi,S)-\langle z_e(S),\xi\rangle$.
	Hence Corollary \ref{cor-bounded} gives
	$u^S(\cdot,S)\ge c$. Since
	\begin{equation}
		\partial_s(e^{-s}u^S)
		=-e^{-s}\dfrac{\Theta K^{\alpha}}{\dfrac{1}{|\mathbb{S}_+^n|}\displaystyle\int_{\mathbb{S}_+^n}\Theta K^{\alpha-1}\mathrm{d}\sigma}\le0,
        \label{equ-uS}
	\end{equation}
	we have, for $S-1\le s\le S$,
	\begin{equation*}
		u^S(\xi,s)\ge e^{s-S}u^S(\xi,S)\ge e^{-1}c.
	\end{equation*}
	This proves the lower bound.

    Since $\diam{(\widehat{\Omega}_s)^d} \leq R_*$ and $0\in\widehat{\Omega}_s$, we know
    \begin{equation*}
        0 \leq h(\xi,s)\leq R_*,\qquad |Y(\xi,s)|\leq R_*.
    \end{equation*}
    Since $z_e(S)\in (\widehat{\Omega}_S)^d$ and $0\in\widehat{\Omega}_S$, we have $|z_e(S)|\leq R_*$. Therefore,
    \begin{align*}
        u^S = &h - \mathrm{e}^{s-S}\langle z_e(S),\xi \rangle \leq 2R_*,  \\
        |\overline{\nabla}h|_{\sigma} \leq & |Y|+h \leq 2R_*,\\
        |\overline{\nabla}u^S|_{\sigma} \leq & |\overline{\nabla}h|_{\sigma} + |z_e(S)| \leq 3R_*.
    \end{align*}
    This completes the proof.
\end{proof}

\subsection{Almost monotonicity for $\alpha>\frac{1}{n+2}$}
In this subsection we prove an almost monotonicity property of the entropy (Proposition~\ref{thm:almost-monotonicity-alpha}), which is crucial for obtaining the entropy limit, the convergence of the entropy point, and uniform bounds on the support function. The main difficulty arises from the conformal factor in the normalized flow. We will show that its effect can be controlled by the integrable diameter decay $\delta(s)$ from Section \ref{sub5.2}.

\begin{proposition}[Almost monotonicity]\label{thm:almost-monotonicity-alpha}
Let $\alpha>\frac{1}{n+2}$.  For all $S\ge s\ge s_1$ with $S-s\le1$,
\begin{equation}\label{eq:two-time-entropy-inequality}
\cE_{\alpha,+}(\widehat\Omega_{S})
-\cE_{\alpha,+}(\widehat\Omega_{s})
\le C\int_{s}^{S}\delta(\tau)\,\dd\tau.
\end{equation}
The corrected entropy
\begin{equation}\label{eq:corrected-entropy}
        \mathcal{E}_{\alpha,+}(\hat{\Omega}_s) +C\int_s^\infty\delta(\tau)\,\dd\tau
\end{equation}
is therefore nonincreasing after enlarging $C$, and the entropy $\cE_{\alpha,+}(\widehat{\Omega}_s)$ has a finite limit as $s\to\infty$.
\end{proposition}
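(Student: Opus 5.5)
Following Guan--Ni and Andrews--Guan--Ni, we fix the terminal time and transport the entropy point backward. Fix $S$ and $s\in[S-1,S]$, and set
\begin{equation*}
z(\tau)=e^{\tau-S}z_e(S),\qquad u^S(\xi,\tau)=h(\xi,\tau)-\langle z(\tau),\xi\rangle,\qquad \tau\in[s,S].
\end{equation*}
Then $z(\tau)\in\PiH$ and $u^S$ solves the same equation as $h$:
\begin{equation*}
\partial_\tau u^S=u^S-\frac{\Theta K^\alpha}{\phi},\qquad \phi=\frac{1}{|\Sph^n_+|}\int_{\Sph^n_+}\Theta K^{\alpha-1}\,\dd\sigma .
\end{equation*}
It also satisfies the Neumann condition $\overline\nabla_\eta u^S=0$, because $\langle z,\eta\rangle=0$ for $z\in\PiH$. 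By \eqref{equ-uS}, $u^S(\cdot,\tau)\ge e^{\tau-S}u^S(\cdot,S)>0$, so $z(\tau)$ is admissible. Hence $\cE_{\alpha,+}(\widehat\Omega_s)\ge\cE_{\alpha,+}(\widehat\Omega_s,z(s))$ and $\cE_{\alpha,+}(\widehat\Omega_S)=\cE_{\alpha,+}(\widehat\Omega_S,z(S))$. It therefore suffices to show
\begin{equation*}
\frac{\dd}{\dd\tau}\cE_{\alpha,+}(\widehat\Omega_\tau,z(\tau))\le C\delta(\tau).
\end{equation*}

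Write $J=\frac{1}{|\Sph^n_+|}\int u^{1-1/\alpha}\,\dd\sigma$ with $u=u^S$. For $\alpha\ne1$ a direct computation gives
\begin{equation*}
\frac{\dd}{\dd\tau}\frac{\alpha}{\alpha-1}\log J=\frac{1}{J}\,\frac{1}{|\Sph^n_+|}\int u^{-1/\alpha}\Bigl(u-\frac{\Theta K^\alpha}{\phi}\Bigr)\dd\sigma .
\end{equation*}
The case $\alpha=1$ is analogous. If $\Theta\equiv1$, this is exactly the closed-case computation of \cite{AGN2016}. That computation uses H\"older's inequality together with $\int u/K=|\Sph^n_+|$ from \eqref{eq:half-minkowski-normalized}, and it also uses that $\int uK^{-1}$ is constant under translations in $\PiH$. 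This gives $\int u^{-1/\alpha}K^{\alpha}\cdot\int K^{\alpha-1}\ge\int u^{1-1/\alpha}\cdot|\Sph^n_+|$, which is precisely the nonpositivity of the derivative. The Neumann condition kills all boundary terms on $\partial\Sph^n_+$, so the inequality holds on the hemisphere by even reflection. The remaining task is to show that replacing $K^\alpha$ by $\Theta K^\alpha$ costs only $C\delta$.

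For the error we write $\Theta=\Lambda^{n\alpha+1}\det(\Id+\rho qB)^\alpha$. From \eqref{s5.Dq}, $|\Lambda-1|\le C\delta$ and $|q|\le C$, so the factor $\Lambda^{n\alpha+1}$ costs only $O(\delta)$ multiplicatively. The determinant factor is the hard part, since $B$ is not bounded a priori. We expand $\det(\Id+\rho qB)^\alpha K^\alpha=(\det B)^{-\alpha}\det(\Id+\rho qB)^{\alpha}=\det(B^{-1}+\rho q\Id)^\alpha$, treat $\rho qB$ as a perturbation, and use $q\le0$ (Lemma~\ref{lem:large-s-gauss-alpha}). This gives the one-sided pointwise bounds
\begin{equation*}
\det(\Id+\rho qB)\le1,\qquad 1-\det(\Id+\rho qB)^\alpha\le C\rho|q|\tr B\ \text{ while }\ \rho|q|\,|B|\ \text{ is small}.
\end{equation*}
The terms linear in $\rho\tr B=\rho\,(\overline\Delta u+nu)$ are then integrated by parts against the weights $u^{-1/\alpha}$ and $K^{\alpha-1}$. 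For the weight $u^{-1/\alpha}$ this produces
\begin{equation*}
\rho\int\Bigl(n u^{1-1/\alpha}+\frac1\alpha u^{-1-1/\alpha}|\overline\nabla u|^2+\ldots\Bigr),
\end{equation*}
with a boundary term that vanishes by the Neumann condition. The same expansion has to be carried out with the curvature weights $K^{\alpha}$ and $K^{\alpha-1}$ present, using that $B^{ij}$ is divergence-free in the cofactor sense. This is the main obstacle.

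\textbf{Case $\alpha\ge1$.} The gradient term from integration by parts has a favorable sign, so the error is bounded by $C\rho\le C\delta$ using only the normalization and the one-sided bounds for $\Theta$.

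\textbf{Case $\frac1{n+2}<\alpha<1$.} The power $u^{-1-1/\alpha}$ is singular and the sign flips. Here we invoke Lemma~\ref{lem:unit-strip-transport-C0}, which gives $c_0\le u^S\le C_0$ and $|\overline\nabla u^S|\le C_0$ on the strip $[S-1,S]$. With these bounds every integrated error term is $\le C\rho(\tau)\le C\delta(\tau)$, by the estimate $\rho\le C\delta$ from \S\ref{sub5.2}.

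Integrating the derivative bound from $s$ to $S$ yields \eqref{eq:two-time-entropy-inequality}. By Lemma~\ref{lem:delta-decay-alpha}, $\delta\in L^1$. Chaining the two-time inequality over unit intervals shows that the corrected entropy \eqref{eq:corrected-entropy} is nonincreasing once $C$ is enlarged. It is bounded below because $\cE_{\alpha,+}\ge0$ by Proposition~\ref{prop:half-entropy-geometry}(ii), so it converges. Since the tail $\int_s^\infty\delta\to0$, the entropy $\cE_{\alpha,+}(\widehat\Omega_s)$ itself has a finite limit.
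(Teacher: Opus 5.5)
Your outer structure matches the paper: backward transport of $z_e(S)$, admissibility from \eqref{equ-uS}, integrating a derivative bound over $[s,S]$, chaining over unit intervals, and the limit argument using $\cE_{\alpha,+}\ge0$ and $\delta\in L^1$. The gap is the core estimate $\frac{\dd}{\dd\tau}\cE_{\alpha,+}(\widehat\Omega_\tau,z(\tau))\le C\delta(\tau)$, which is not proved. You yourself leave the control of $J=\det(\Id+\rho qB)$ inside the curvature-weighted integrals as ``the main obstacle,'' and the route you sketch does not close at this stage. Treating $J$ as a perturbation after the fact produces error terms such as $\rho\int\tr B\,K^\alpha u^{-1/\alpha}\,\dd\sigma$. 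Without a $C^2$ bound, $\tr B\,K^\alpha$ is not pointwise comparable to $K^\alpha$. Integrating $\overline\nabla^2$-terms $\overline\Delta u$ by parts against $K^\alpha u^{-1/\alpha}$ puts derivatives on $K$, i.e.\ third derivatives of $h$, and nothing controls these.

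Your case split is also inconsistent with your own computation. Against the weight $u^{-1/\alpha}$ the gradient term is $+\frac1\alpha\int u^{-1-1/\alpha}|\overline\nabla u|^2$, which is unfavorable for every $\alpha$. Moreover, for $\alpha\ge1$ you have no $C^0$ or $C^1$ bounds on $u^S$, since Lemma~\ref{lem:unit-strip-transport-C0} is only available for $\alpha<1$. Two smaller points. First, the closed-case inequality should read $|\Sph^n_+|\int K^\alpha u^{-1/\alpha}\ge\int K^{\alpha-1}\int u^{1-1/\alpha}$, not what you wrote. Second, no smallness of $\rho|q||B|$ is needed (nor is it available): the eigenvalues of $-\rho qB$ lie in $[0,1)$, which gives $1-J\le\sum_i\lambda_i$ and $1-J^\alpha\le\max\{1,\alpha\}(1-J)$ directly.

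The missing idea is to put $J$ inside H\"older rather than perturbing afterwards. Take the probability measure $\dd v=u\,\dd\sigma/(K|\Sph^n_+|)$; this is where \eqref{eq:half-minkowski-normalized} and the invariance of $\int u/K$ under translations in $\PiH$ enter. With $g=J^\alpha K^\alpha/u$, H\"older gives
\begin{equation*}
\frac{1}{|\Sph^n_+|}\int J^{\alpha+1}K^\alpha u^{-\frac1\alpha}\,\dd\sigma\ \ge\ \Bigl(\frac{1}{|\Sph^n_+|}\int J^{\alpha}K^{\alpha-1}\,\dd\sigma\Bigr)\Bigl(\frac{1}{|\Sph^n_+|}\int J u^{1-\frac1\alpha}\,\dd\sigma\Bigr).
\end{equation*}
Combine this with $(1-C\delta)J^\alpha\le\Theta\le(1+C\delta)J^\alpha$ (from $|\Lambda-1|\le C\delta$) and $J^\alpha\ge J^{\alpha+1}$. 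Every curvature weight then cancels, and the derivative is at most $\int(1-J)u^{1-1/\alpha}\big/\int u^{1-1/\alpha}+C\delta$.

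Only at this point is $1-J\le C\rho\tr B=C\rho(\overline\Delta u+nu)$ used. Integrating by parts against $u^{1-1/\alpha}$, with the Neumann condition, gives the gradient coefficient $\frac{1-\alpha}{\alpha}$; this is the true source of the dichotomy. For $\alpha\ge1$ the gradient term is dropped, and the pointwise bound $\rho u^S\le C\delta$ finishes (it is the support function of the unnormalized body about a point inside it). For $\frac1{n+2}<\alpha<1$, the strip bounds of Lemma~\ref{lem:unit-strip-transport-C0} together with $\rho\le C\delta$ finish.
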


To avoid differentiating the supremum in the definition of the entropy, we fix a terminal time and transport its entropy point backward. This standard device is used in \cite{GN17,AGN2016,CH2024} and ensures that the chosen centre remains admissible at all earlier times.

Now fix a terminal time $S$ and $z_e(S)$ such that $\mathcal{E}_{\alpha,+}(\hat{\Omega}_S)= \mathcal{E}_{\alpha,+}(\hat{\Omega}_S,z_e(S))$ and $\rho(S)z_e(S)$ belongs to the unnormalized half-body at time $S$.  For $S-1 \leq s\le S$ set
\begin{equation}\label{eq:transported-support}
        u^{S}(\xi,s)
        =h(\xi,s)-e^{s-S}\langle z_e(S),\xi\rangle.
\end{equation}
The function $\rho(s)u^{S}(\cdot,s)$ is the
support function of the unnormalized half-body at time $s$ with respect to the fixed point $\rho(S)z_e(S)$. By \eqref{equ-uS}, $u^{S}>0$ on every earlier time slice under consideration. Moreover, it satisfies
\begin{equation*}
        \partial_su^{S}=u^{S}-\dfrac{\Theta K^{\alpha}}{\dfrac{1}{|\mathbb{S}_+^n|}\displaystyle\int_{\mathbb{S}_+^n}\Theta K^{\alpha-1}\mathrm{d}\sigma},
        \qquad \overline{\nabla}_\eta u^{S}=0\quad \text{on}~\partial\mathbb{S}^n_+.
\end{equation*}
For $\alpha\ne1$, differentiating the entropy at this transported center gives
\begin{equation}\label{eq:transported-entropy-derivative}
\frac{\dd}{\dd s}
\cE_{\alpha,+}(\widehat\Omega_s,e^{s-S}z_e(S)) = 1-
\frac{\dfrac{1}{|\mathbb{S}_+^n|}\displaystyle\int_{\mathbb{S}_+^n}\Theta K^\alpha (u^{S})^{-\frac{1}{\alpha}}\mathrm{d}\sigma}
     {\dfrac{1}{|\mathbb{S}_+^n|}\displaystyle\int_{\mathbb{S}_+^n}\Theta K^{\alpha-1}\mathrm{d}\sigma
      \dfrac{1}{|\mathbb{S}_+^n|}\displaystyle\int_{\mathbb{S}_+^n}(u^{S})^{1-\frac{1}{\alpha}}\mathrm{d}\sigma}.
\end{equation}
For $\alpha=1$, the corresponding identity is
\begin{equation}\label{eq:transported-entropy-derivative-one}
\frac{\dd}{\dd s}
\mathcal{E}_{1,+}(\widehat\Omega_s,e^{s-S}z_e(S))
=1-\frac{\displaystyle\int_{\mathbb{S}_+^n}\Theta K/(u^{S})\mathrm{d}\sigma}{\displaystyle\int_{\mathbb{S}_+^n}\Theta \mathrm{d}\sigma}.
\end{equation}


\begin{lemma}
\label{lem:transported-entropy-alpha}
Let $\alpha > \frac{1}{n+2}$.  There are $s_1<\infty$ and $C<\infty$ such that, whenever $S\ge s\ge s_1$ with $S-s\leq 1$ and $u^{S}$ is defined by
\eqref{eq:transported-support},
\begin{equation}\label{eq:entropy-almost-monotone}
\frac{\dd}{\dd s}
\mathcal{E}_{\alpha,+}(\widehat\Omega_s,e^{s-S}z_e(S))
\le C\delta(s).
\end{equation}
\end{lemma}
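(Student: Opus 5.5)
The plan is to show that the ratio on the right-hand side of \eqref{eq:transported-entropy-derivative} (respectively \eqref{eq:transported-entropy-derivative-one}) is at least $1-C\delta(s)$. I treat $\Theta$ as a perturbation of the constant $1$, which is the closed Euclidean situation of \cite{AGN2016,GN17}. Throughout, $u=u^S$.

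\textbf{Step 1: the unperturbed inequality.} Write
\begin{equation*}
K^{\alpha-1}=\bigl(K^\alpha u^{-1/\alpha}\bigr)^{\frac{\alpha}{\alpha+1}}\bigl(u/K\bigr)^{\frac1{\alpha+1}}
\end{equation*}
and apply H\"older's inequality, which gives
\begin{equation*}
\int_{\Sph^n_+} K^{\alpha-1}\,\dd\sigma\le\Bigl(\int_{\Sph^n_+} K^\alpha u^{-1/\alpha}\,\dd\sigma\Bigr)^{\frac{\alpha}{\alpha+1}}\Bigl(\int_{\Sph^n_+} u/K\,\dd\sigma\Bigr)^{\frac1{\alpha+1}}.
\end{equation*}
The first factor is the numerator in \eqref{eq:transported-entropy-derivative}. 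For the second factor I use two facts. The term $e^{s-S}\langle z_e(S),\xi\rangle$ is a linear function with $z_e(S)\in\PiH$, and its integral against $K^{-1}\,\dd\sigma$ over the hemisphere vanishes by the even reflection across $\PiH$. Combined with \eqref{eq:half-minkowski-normalized}, this gives $\frac1{|\Sph^n_+|}\int u/K\,\dd\sigma=1$. Inserting this into the comparison with $\int u^{1-1/\alpha}$, exactly as in the closed argument of \cite{AGN2016} applied to the doubled body, yields the inequality with $\Theta\equiv1$: the ratio is at least $1$. For $\alpha=1$ the analogous argument uses the logarithmic form.

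\textbf{Step 2: the perturbation.} By \eqref{s5.Dq} we have $\Lambda=1+O(\delta)$, because $\rho|Y|\le\delta$. We also have $q\le0$, since $\xi_{n+1}\ge0$ and $h\ge0$. Hence $0<\Id+\rho qB\le\Id$, so $\det(\Id+\rho qB)^\alpha\le1$ pointwise. This makes the denominator easy:
\begin{equation*}
\int\Theta K^{\alpha-1}\le(1+C\delta)\int K^{\alpha-1}.
\end{equation*}
The numerator needs a lower bound. Here I expand
\begin{equation*}
\det(\Id+\rho qB)^\alpha\ge 1+\alpha\rho q\,\tr B-(\text{higher order in }\rho qB).
\end{equation*}
The error term is then $\rho\int K^\alpha u^{-1/\alpha}|q|\,(\overline\Delta h+nh)\,\dd\sigma$, together with analogous terms for the higher elementary symmetric functions $\sigma_k(B)$. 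These terms are controlled by integration by parts in $\overline\nabla$ rather than pointwise. The Neumann condition $\overline\nabla_\eta h=0$ kills the boundary terms on $\bd\Sph^n_+$. After integration by parts only $\overline\nabla h$, $\overline\nabla q$ (which is $O(1)$) and $\overline\nabla(K^\alpha u^{-1/\alpha})$ appear, multiplied by $\rho\le C\delta$.

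\textbf{Step 3 (main obstacle): the gradient term.} The term $\overline\nabla(K^\alpha u^{-1/\alpha})$ is the delicate one.
\begin{itemize}[label=--]
\item For $\alpha\ge1$, I would rewrite $K^\alpha u^{-1/\alpha}\overline\nabla_i\overline\nabla_j h$ in divergence form, using that $\sigma_k(B)$ is a divergence-free combination in $B$. The derivative then falls on $u^{-1/\alpha}$ and on $K^{\alpha}=K\cdot K^{\alpha-1}$. The resulting quadratic gradient term has a favorable sign and can be dropped. The remaining terms are bounded by $C\rho\int K^{\alpha-1}$ after using $|\overline\nabla u|\le C\,u$-type bounds.
\item For $\frac1{n+2}<\alpha<1$ that sign is lost. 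There I use Lemma~\ref{lem:unit-strip-transport-C0}: on the strip $[S-1,S]$ we have $c_0\le u\le C_0$ and $|\overline\nabla u|\le C_0$. Hence $u^{-1/\alpha}$ and its gradient are bounded, and every error term is at most $C\rho\int K^\alpha u^{-1/\alpha}$ or $C\rho\int K^{\alpha-1}$.
\end{itemize}
In both ranges the outcome is numerator $\ge(1-C\delta)\int K^\alpha u^{-1/\alpha}$.

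\textbf{Conclusion.} Combining with Step 1 and $\rho\le C\delta$, the ratio is at least $1-C\delta(s)$, which gives \eqref{eq:entropy-almost-monotone}. I expect Step 3 to be the hardest: the higher $\sigma_k$ terms and the case $\alpha<1$ must be handled without any $C^2$ bound on $h$.
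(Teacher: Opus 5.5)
Your Step 1 and the denominator bound in Step 2 are fine. The numerator estimate in Steps 2--3, however, has a genuine gap. Because you treat numerator and denominator separately, you must show
\[
\int_{\Sph^n_+}(1-J^\alpha)K^\alpha u^{-1/\alpha}\,\dd\sigma\le C\delta\int_{\Sph^n_+}K^\alpha u^{-1/\alpha}\,\dd\sigma,\qquad J=\det(\Id+\rho qB).
\]
Here the error is weighted by $K^\alpha=(\det B)^{-\alpha}$. Pointwise you only know $\rho\tr B\le C$ (from $A_Z\ge c_2\Id$), which is $O(1)$ rather than $O(\delta)$, so integration by parts is indeed needed. But integrating $\overline\Delta h$ against $K^\alpha u^{-1/\alpha}|q|$ moves a derivative onto $K^\alpha$. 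This produces $\rho\int u^{-1/\alpha}|q|\langle\overline\nabla K^\alpha,\overline\nabla h\rangle$, which involves third derivatives of $h$. Lemma~\ref{lem:unit-strip-transport-C0} controls only $u^S$ and $\overline\nabla u^S$. So your claim that every error term is at most $C\rho\int K^\alpha u^{-1/\alpha}$ or $C\rho\int K^{\alpha-1}$ is unsupported for $\alpha<1$. For $\alpha\ge1$ the asserted favorable sign is also absent: $K^\alpha\tr B=\sigma_1(B)\sigma_n(B)^{-\alpha}$ is not of divergence form $\sigma_k^{ij}(B)B_{ij}$, and $\langle\overline\nabla\log K,\overline\nabla h\rangle=-A^{ij}B_{ij,k}h_k$ has no sign. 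By contrast, the higher $\sigma_k$ terms you worry about are a non-issue. Since the eigenvalues $\lambda_i$ of $-\rho qB$ lie in $[0,1)$, one has pointwise $1-J\le\sum_i\lambda_i\le C\rho\tr B$.

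The missing idea is to absorb $J$ into the H\"older step so that $K$ disappears from the error weight. The paper takes $g=J^\alpha K^\alpha/u^S$ with the probability measure $\dd v=u^S(K|\Sph^n_+|)^{-1}\dd\sigma$. The same two H\"older inequalities as in the closed case then give, with all integrals over $\Sph^n_+$ normalized by $|\Sph^n_+|$,
\[
\int J^{\alpha+1}K^\alpha(u^S)^{-\frac1\alpha}\,\dd\sigma\ge\Big(\int J^\alpha K^{\alpha-1}\,\dd\sigma\Big)\Big(\int J(u^S)^{1-\frac1\alpha}\,\dd\sigma\Big).
\]
Combine this with $J^\alpha\ge J^{\alpha+1}$ and the bounds $(1-C\delta)J^\alpha\le\Theta\le(1+C\delta)J^\alpha$. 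The entropy derivative is then at most
\[
\frac{\int(1-J)(u^S)^{1-1/\alpha}\,\dd\sigma}{\int(u^S)^{1-1/\alpha}\,\dd\sigma}+C\delta .
\]
Now the weight depends on $u^S$ alone. Using $1-J\le C\rho(\overline\Delta u^S+nu^S)$ and one integration by parts with the Neumann condition, the only new term is $\frac{1-\alpha}{\alpha}\int(u^S)^{-1/\alpha}|\overline\nabla u^S|^2\,\dd\sigma$. This term is $\le0$ for $\alpha\ge1$ and bounded via Lemma~\ref{lem:unit-strip-transport-C0} for $\alpha<1$. Together with $\rho u^S\le C\delta$ and $\rho\le C\delta$, this yields the bound $C\delta(s)$.
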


\begin{proof}
Set $J:=\det(\Id+\rho q B)$.
Then
\begin{equation*}
\Theta = \Lambda^{n\alpha+1} J^\alpha,\qquad \mbox{where}\ \Lambda = |E+\rho Y|^2.
\end{equation*}
Lemma \ref{lem:delta-decay-alpha} and \eqref{eq:delta-def} give
\begin{equation*}
        |\Lambda-1|+|\Lambda^{n\alpha+1}-1|\le C\delta(s).
\end{equation*}
In particular, there exists a constant $C$, depending only on $n,\alpha$ and the initial data, such that, for all sufficiently large $s$,
\begin{equation}
    0 < (1 - C \delta(s)) J^\alpha \leq \Theta \leq (1 + C \delta(s)) J^\alpha,
    \label{equ-Theta-bound}
\end{equation}
and
\begin{equation}
    \frac{1 - C\delta(s)}{1 + C\delta(s)} \geq 1 - 2C\delta(s)>0.
    \label{equ-factor-est}
\end{equation} 
The conformal convexity proved in Lemma \ref{lem:large-s-gauss-alpha} gives
$A_Z>0$.  Since the origin belongs to the half-body, $h\ge0$. Hence
\eqref{s5.Dq} gives $q\le0$. Therefore the eigenvalues of $\rho q B$ are
$-\lambda_i$ with $0\le\lambda_i<1$.  Hence
\begin{equation}\label{eq:J-basic}
        0<J=\prod_i(1-\lambda_i)\le1,
        \qquad
        1-J\le\sum_i\lambda_i\le C\rho\tr B,
\end{equation}
since $|q|\leq C$ for sufficiently large $s$.

The divergence theorem implies that, for a constant vector $z\in\Pi$,
\begin{align*}
\int_{\mathbb{S}_+^n}\dfrac{\langle z,\xi \rangle}{K}\mathrm{d}\sigma=&\int_{Y(M,s)} \langle z,N \rangle\mathrm{d}\mu+\int_{\widehat{\Omega}_s\cap\Pi}\langle z,\mu \rangle \mathrm{d}s \\
=& \int_{\widehat{\Omega}_s}\mathrm{div}~z~\mathrm{d}x=0.
\end{align*}
Equation \eqref{eq:half-minkowski-normalized} now gives
\begin{equation*}
	\dfrac{1}{|\mathbb{S}_+^n|}\int_{\mathbb{S}_+^n} \frac{u^S}{K} \mathrm{d}\sigma
	=\dfrac{1}{|\mathbb{S}_+^n|}\int_{\mathbb{S}_+^n} \frac{h}{K}\mathrm{d}\sigma=1.
\end{equation*}
Set $g=\dfrac{J^\alpha K^\alpha}{u^S}>0$.  With respect to the probability measure $\mathrm{d}v=\dfrac{u^S}{K|\mathbb{S}^n_+|}d\sigma$,
\begin{align*}
        \int g\,\mathrm{d}v
        &=\frac{1}{|\mathbb S_+^n|}
          \int_{\mathbb S_+^n}J^\alpha K^{\alpha-1}\,\mathrm{d}\sigma,\\
        \int g^{\frac{1}{\alpha}}\,\mathrm{d}v
        &=\frac{1}{|\mathbb S_+^n|}
          \int_{\mathbb S_+^n}J (u^S)^{1-\frac{1}{\alpha}}\,\mathrm{d}\sigma,\\
        \int g^{1+\frac{1}{\alpha}}\,\mathrm{d}v
        &=\frac{1}{|\mathbb S_+^n|}
          \int_{\mathbb S_+^n}J^{\alpha+1}K^\alpha
          (u^{S})^{-\frac1\alpha}\,\mathrm{d}\sigma.
\end{align*}
H\"older's inequality gives
\begin{equation*}
        \int g\,\mathrm{d}v
        \leq \left(\int g^{1+\frac{1}{\alpha}}\,\mathrm{d}v \right)^{\frac{\alpha}{\alpha+1}},
        \qquad
        \int g^{\frac{1}{\alpha}}\,\mathrm{d}v
        \leq \left(\int g^{1+\frac{1}{\alpha}}\,\mathrm{d}v\right)^{\frac{1}{\alpha+1}}.
\end{equation*}
Multiplying these two inequalities yields
\begin{align}\label{equ-holder}
    \dfrac{1}{|\mathbb{S}_+^n|}\int_{\mathbb{S}_+^n} J^{\alpha+1}K^{\alpha}(u^S)^{-\frac{1}{\alpha}}\mathrm{d}\sigma \geq &~\left( \dfrac{1}{|\mathbb{S}_+^n|}\int_{\mathbb{S}_+^n} J^{\alpha}K^{\alpha-1}\mathrm{d}\sigma \right)\nonumber\\
    &\qquad \times \left( \dfrac{1}{|\mathbb{S}_+^n|}\int_{\mathbb{S}_+^n} J(u^S)^{1-\frac{1}{\alpha}}\mathrm{d}\sigma \right).    
\end{align}
Since $0 < J \leq 1$, we also have $J^\alpha \geq J^{\alpha+1}$, and therefore
\begin{equation}
    \dfrac{1}{|\mathbb{S}_+^n|}\int_{\mathbb{S}_+^n} J^{\alpha}K^{\alpha}(u^S)^{-\frac{1}{\alpha}}\mathrm{d}\sigma \geq \dfrac{1}{|\mathbb{S}_+^n|}\int_{\mathbb{S}_+^n} J^{\alpha+1}K^{\alpha}(u^S)^{-\frac{1}{\alpha}}\mathrm{d}\sigma.
    \label{equ-holder1}
\end{equation}

For $\alpha>\frac{1}{n+2}$, combining \eqref{eq:transported-entropy-derivative}, \eqref{eq:transported-entropy-derivative-one}, \eqref{equ-Theta-bound}, \eqref{equ-factor-est}, \eqref{equ-holder} and \eqref{equ-holder1}, we obtain
\begin{align}
    \frac{\dd}{\dd s}\mathcal{E}_{\alpha,+}(\widehat\Omega_s,e^{s-S}z_e(S)) \leq & 1- \dfrac{\dfrac{1}{|\mathbb{S}_+^n|}\displaystyle\int_{\mathbb{S}_+^n}J^{\alpha}K^{\alpha}(u^S)^{-\frac{1}{\alpha}}\mathrm{d}\sigma}{\dfrac{1}{|\mathbb{S}_+^n|}\displaystyle\int_{\mathbb{S}_+^n}J^{\alpha}K^{\alpha-1}\mathrm{d}\sigma\dfrac{1}{|\mathbb{S}_+^n|}\int_{\mathbb{S}_+^n}(u^S)^{1-\frac{1}{\alpha}}\mathrm{d}\sigma}(1-2C\delta(s)) \nonumber\\
    \leq & 1- \dfrac{\dfrac{1}{|\mathbb{S}_+^n|}\displaystyle\int_{\mathbb{S}_+^n}J^{\alpha+1}K^{\alpha}(u^S)^{-\frac{1}{\alpha}}\mathrm{d}\sigma}{\dfrac{1}{|\mathbb{S}_+^n|}\displaystyle\int_{\mathbb{S}_+^n}J^{\alpha}K^{\alpha-1}\mathrm{d}\sigma\dfrac{1}{|\mathbb{S}_+^n|}\int_{\mathbb{S}_+^n}(u^S)^{1-\frac{1}{\alpha}}\mathrm{d}\sigma}(1-2C\delta(s)) \nonumber\\
    \leq & 1 - \dfrac{\displaystyle\int_{\mathbb{S}_+^n}J (u^S)^{1-\frac{1}{\alpha}}\mathrm{d}\sigma}{\displaystyle\int_{\mathbb{S}_+^n} (u^S)^{1-\frac{1}{\alpha}}\mathrm{d}\sigma} (1-2C\delta(s))   \nonumber\\
    \leq & \dfrac{\displaystyle\int_{\mathbb{S}_+^n} (1-J)(u^S)^{1-\frac{1}{\alpha}}\mathrm{d}\sigma}{\displaystyle\int_{\mathbb{S}_+^n} (u^S)^{1-\frac{1}{\alpha}}\mathrm{d}\sigma} + C\delta(s),  \label{equ-dsentropy}
\end{align}
where we used $0<J\leq 1$ in the last line and the uniform constant $C$ is independent of $s$.

Since $\rho(S) z_e(S)$ belongs to the unnormalized half-body $\Omega_{t(S)}^{\mathbb{H}}\subset \Omega_{t(s)}^{\mathbb{H}}$, we have
\begin{equation}\label{eq:U-upper-delta}
        0<\rho u^S=\rho h- \langle \rho(S) z_e(S), \xi \rangle  \le  C\delta(s).
\end{equation}
Since $\overline{\nabla}_{\eta}\langle z_e(S),\xi \rangle = \langle z_e(S),\eta \rangle=0$ on $\partial\mathbb{S}_+^n$, we have
\begin{equation}\label{eq:rhoB-U}
        B=\overline{\nabla}^2 u^S+ u^S\sigma,
        \qquad \overline{\nabla}_\eta u^S=0\ \mathrm{on}\ \partial\mathbb{S}_+^n.
\end{equation}
Write the eigenvalues of $-\rho qB$ as $\lambda_i\in[0,1)$.  Since $|q|\le C$ for large $s$,
\begin{align*}
        0\le1-J
        =&1-\prod_{i=1}^n(1-\lambda_i)\\
        \le &\sum_{i=1}^n\lambda_i
        \le C\rho\tr B
        =C\rho(\overline{\Delta} u^S+nu^S).
\end{align*}
Using the Neumann condition in \eqref{eq:rhoB-U} and integrating
by parts, we obtain
\begin{align}
        \int_{\mathbb{S}^n_+}(1-J)(u^S)^{1-\frac{1}{\alpha}}\,\dd\sigma
        &\le C\rho\int_{\mathbb{S}^n_+}(\overline{\Delta} u^S+nu^S)(u^S)^{1-\frac{1}{\alpha}}\,\dd\sigma     \nonumber\\
        &=C\rho\int_{\mathbb{S}_+^n} \left( \frac{1-\alpha}{\alpha}\dfrac{|\overline{\nabla} u^S|_{\sigma}^2}{u^S} + n u^S \right)(u^S)^{1-\frac{1}{\alpha}}\dd \sigma\label{eq:BJ-B-direct-alpha}
\end{align}
For $\alpha\geq 1$, we have $\frac{1-\alpha}{\alpha}\leq 0$, so \eqref{eq:U-upper-delta} gives
\begin{align}\label{equ-alphageq1}
	\int_{\mathbb{S}^n_+}(1-J)(u^S)^{1-\frac{1}{\alpha}}\,\dd\sigma  \leq &~ C\rho \int_{\mathbb{S}^n_+} (u^S)^{2-\frac{1}{\alpha}} \,\dd\sigma \nonumber\\
    \leq &~C\delta(s)\int_{\mathbb{S}^n_+} (u^S)^{1-\frac{1}{\alpha}} \,\dd\sigma.
\end{align}
For $\frac{1}{n+2}<\alpha<1$, we have $\frac{1-\alpha}{\alpha}>0$.  Lemma \ref{lem:unit-strip-transport-C0} gives
\begin{equation*}
	  \dfrac{1}{C}\leq u^S \leq C,\quad |\overline{\nabla}u^S|_{\sigma} \leq C.
\end{equation*}
 Substituting this into \eqref{eq:BJ-B-direct-alpha} and noting that $\rho(s)\leq C\delta(s)$, we obtain
\begin{align}
	\int_{\mathbb{S}^n_+}(1-J)(u^S)^{1-\frac{1}{\alpha}}\,\dd\sigma \leq &C\rho(s)\int_{\mathbb{S}^n_+}(u^S)^{1-\frac{1}{\alpha}}\,\dd\sigma \nonumber\\
    \leq &C\delta(s)\int_{\mathbb{S}^n_+}(u^S)^{1-\frac{1}{\alpha}}\,\dd\sigma.
    \label{equ-alpha<1}
\end{align}
Equations \eqref{equ-alphageq1} and \eqref{equ-alpha<1} therefore give, for $\alpha>\frac{1}{n+2}$,
\begin{equation*}
        \dfrac{\displaystyle\int_{\mathbb{S}_+^n} (1-J)(u^S)^{1-\frac{1}{\alpha}}\mathrm{d}\sigma}{\displaystyle\int_{\mathbb{S}_+^n} (u^S)^{1-\frac{1}{\alpha}}\mathrm{d}\sigma}  \leq C\delta(s).
\end{equation*}
Substitution into \eqref{equ-dsentropy} proves \eqref{eq:entropy-almost-monotone}.
\end{proof}

\begin{remark}
The role of Lemma \ref{lem:transported-entropy-alpha} depends on the sign of $1-\frac{1}{\alpha}$.

If $\alpha\ge1$, then $1-\frac{1}{\alpha}\geq 0$.  In
this case, the integration-by-parts identity \eqref{eq:BJ-B-direct-alpha} contains the nonpositive gradient contribution
\begin{equation*}
        -\frac{\alpha-1}{\alpha}\int_{\mathbb{S}^n_+}(u^S)^{-\frac{1}{\alpha}}
        |\overline\nabla u^S|_\sigma^2\,\dd \sigma \le 0.
\end{equation*}
The estimate of
Lemma \ref{lem:transported-entropy-alpha} therefore holds for every admissible support function
\begin{equation*}
        u(\xi)=h(\xi,s)-\langle z,\xi\rangle,\qquad z\in\PiH,
        \qquad u>0\quad\hbox{on }\Sph^n_+ .
\end{equation*}

By contrast, if $\frac1{n+2}<\alpha<1$, then $1-\frac{1}{\alpha}<0$, and the same integration by parts produces the positive weighted gradient term
\begin{equation*}
        -\frac{\alpha-1}{\alpha}\int_{\mathbb{S}^n_+}(u^S)^{-\frac{1}{\alpha}}
        |\overline\nabla u^S|_\sigma^2\,\dd\sigma.
\end{equation*}
This term can be singular for a general admissible translation because the corresponding support function may approach zero.  In this range, Lemma \ref{lem:unit-strip-transport-C0} applies after the entropy point has been chosen at the terminal time $S$ and transported backward. The resulting support function
\begin{equation*}
        u^S(\xi,s)=h(\xi,s)-e^{s-S}\langle z_e(S),\xi\rangle,
        \qquad S-1\le s\le S,
\end{equation*}
satisfies uniform $C^0$ and $C^1$ bounds on the unit strip. These bounds control the gradient term in \eqref{eq:BJ-B-direct-alpha}. 

For $\frac1{n+2}<\alpha<1$, Lemma \ref{lem:transported-entropy-alpha} is therefore used only for the transported functions $u^S$. For $\alpha\ge1$, it is valid for all admissible centers.
\end{remark}

With Lemma \ref{lem:transported-entropy-alpha} in hand, the almost-monotonicity follows by integrating the derivative estimate and comparing the entropy at the earlier time with its value at the transported centre.

\begin{proof}[Proof of Proposition \ref{thm:almost-monotonicity-alpha}]
Recall that $u^S$ is positive on $[s,S]$. At the terminal time $S$,
\begin{equation*}
        \mathcal{E}_{\alpha,+}(\widehat\Omega_{S},z_e(S))
        =\mathcal{E}_{\alpha,+}(\widehat\Omega_{S}),
\end{equation*}
whereas at the earlier time the supremum property gives
\begin{equation*}
        \cE_{\alpha,+}(\widehat\Omega_{s})
        \ge
        \cE_{\alpha,+}(\widehat\Omega_{s},e^{s-S}z_e(S)).
\end{equation*}
Integrating \eqref{eq:entropy-almost-monotone} in $[s,S]$ therefore yields
\begin{align}
    \mathcal{E}_{\alpha,+}(\widehat{\Omega}_S)-\mathcal{E}_{\alpha,+}(\widehat{\Omega}_s) \leq & \mathcal{E}_{\alpha,+}(\widehat{\Omega}_S,z_e(S)) - \mathcal{E}_{\alpha,+}(\widehat{\Omega}_s,\mathrm{e}^{s-S}z_e(S)) \nonumber \\
    =& \int_s^S \dfrac{\mathrm{d}}{\mathrm{d}\tau} \mathcal{E}_{\alpha,+}(\widehat{\Omega}_{\tau},\mathrm{e}^{\tau-S}z_e(S)) ~\mathrm{d}\tau \nonumber \\
    \leq & C \int_s^S \delta(\tau)~\mathrm{d}\tau,  \label{equ-dentropy}
\end{align}
which is \eqref{eq:two-time-entropy-inequality}.  Chaining \eqref{eq:two-time-entropy-inequality} over intervals of length at most one gives the same inequality for arbitrary later times $S\ge s\ge s_1$, and hence proves the monotonicity of \eqref{eq:corrected-entropy}.

Since $\delta\in L^1([s_1,\infty))$, the corrected entropy $\cE_{\alpha,+}(\widehat{\Omega}_s)+C\int_s^{\infty}\delta(\tau)\dd\tau$ is nonincreasing.  Proposition \ref{prop:half-entropy-geometry} bounds $\mathcal{E}_{\alpha,+}(\widehat{\Omega}_s)$ below by zero, so the entropy has a finite limit.
\end{proof}

The almost monotonicity property (Proposition \ref{thm:almost-monotonicity-alpha}), together with the static entropy geometry (Proposition \ref{prop:half-entropy-geometry}), immediately gives uniform radius bounds for the normalized bodies.

\begin{corollary}[Uniform radius bounds]\label{cor:normalized-radius-bounds}
There is $C<\infty$ such that, for all sufficiently large $s$,
\begin{equation*}
        C^{-1}\le r_-(\widehat\Omega_s)
        \le r_+(\widehat\Omega_s)\le C,
\end{equation*}
and the same two-sided bounds hold for the minimum and maximum widths.
\end{corollary}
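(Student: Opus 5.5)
The plan is to bound the entropy $\cE_{\alpha,+}(\widehat\Omega_s)$ above and below uniformly in $s$, and then read off the radius and width bounds from Proposition \ref{prop:half-entropy-geometry}(iii).

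For the upper bound, I would use Proposition \ref{thm:almost-monotonicity-alpha}. It says the corrected entropy $\cE_{\alpha,+}(\widehat\Omega_s)+C\int_s^\infty\delta(\tau)\,\dd\tau$ is nonincreasing for $s\ge s_1$. Lemma \ref{lem:delta-decay-alpha} gives $\delta\in L^1([s_1,\infty))$. Hence, for every $s\ge s_1$,
\begin{equation*}
\cE_{\alpha,+}(\widehat\Omega_s)\le \cE_{\alpha,+}(\widehat\Omega_{s_1})+C\int_{s_1}^\infty\delta(\tau)\,\dd\tau=:L .
\end{equation*}
The quantity $\cE_{\alpha,+}(\widehat\Omega_{s_1})$ is finite, since $\widehat\Omega_{s_1}$ is a fixed compact convex half-body with nonempty interior. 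So $L$ depends only on $n$, $\alpha$ and the initial hypersurface. For the lower bound, Proposition \ref{prop:half-entropy-geometry}(ii) gives $\cE_{\alpha,+}\ge0$; this is not needed for the estimate itself, only as a consistency check.

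Next I would apply Proposition \ref{prop:half-entropy-geometry}(iii) to $\widehat\Omega_s$. This is legitimate because $|\widehat\Omega_s|=V_+$ by the normalization \eqref{equ-s}, and $\alpha>\frac1{n+2}$. Then \eqref{eq:entropy-radius-control} gives
\begin{equation*}
\min\{r_-,\omega_-\}(\widehat\Omega_s)\ge C^{-1}e^{-bL},\qquad \max\{r_+,\omega_+\}(\widehat\Omega_s)\le Ce^{nbL}.
\end{equation*}
Together with the trivial inequality $r_-\le r_+$, this is exactly the assertion of Corollary \ref{cor:normalized-radius-bounds}, including the width bounds.

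I do not expect a real obstacle here. The only care needed is to invoke the monotonicity of the corrected entropy only for $s\ge s_1$, where Proposition \ref{thm:almost-monotonicity-alpha} holds, and to check that $s_1$ is large enough for the normalized flow and its entropy to be defined; "sufficiently large $s$" in the statement absorbs this.
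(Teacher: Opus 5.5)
Your proposal is correct and is the paper's argument: the monotonicity of the corrected entropy in Proposition \ref{thm:almost-monotonicity-alpha}, with $\delta\in L^1$, gives a uniform upper bound on $\cE_{\alpha,+}(\widehat\Omega_s)$. The estimate \eqref{eq:entropy-radius-control} then turns this into both the radius bounds and the width bounds. You write out the bound $L$ explicitly and note that only the upper entropy bound is needed, which the paper leaves implicit.
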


\begin{proof}
Proposition \ref{thm:almost-monotonicity-alpha} bounds the ordinary entropy from above, and \eqref{eq:entropy-radius-control} gives the asserted radius and width bounds.
\end{proof}

\subsection{The $C^0$ and $C^1$ estimates}
We now apply the almost monotonicity result (Proposition~\ref{thm:almost-monotonicity-alpha}) to show that the entropy point $z_e(s)$ converges to the origin. Once $z_e(s)\to0$, the static entropy geometry gives uniform upper and lower bounds for the support function $h$ as well as upper bounds for $|\overline{\nabla}h|_{\sigma}$.

\begin{proposition}
	\label{thm:C0-alpha}
	Assume $\alpha>\frac{1}{n+2}$. Then
	\begin{equation}
    \label{eq:entropy-point-to-zero}
		z_e(s)\longrightarrow 0.
	\end{equation}
	Consequently, after increasing the initial time, the support function satisfies
	\begin{equation}\label{eq:subunit-origin-C0}
		c\le h(\xi,s)\le C,\qquad |\overline{\nabla}h|_{\sigma}(\xi,s)\leq C,
		\qquad \xi\in\mathbb{S}^n_+.
	\end{equation}
\end{proposition}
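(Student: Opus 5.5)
The plan is to show $z_e(s)\to0$ by a two-time comparison built from the almost monotonicity. Combined with the quadratic stability \eqref{eq:entropy-quadratic-stability}, this yields the estimates \eqref{eq:subunit-origin-C0}. The entropy is bounded above for large $s$ by Proposition~\ref{thm:almost-monotonicity-alpha}, so Proposition~\ref{prop:half-entropy-geometry}(iv) with some fixed $L$ supplies uniform constants $d_L,c_L$. Corollary~\ref{cor:normalized-radius-bounds} gives $|z_e(s)|\le C$.

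Fix $s$ large and set $S=s+1$. As in the proof of Proposition~\ref{thm:almost-monotonicity-alpha}, transport $z_e(S)$ backward and integrate Lemma~\ref{lem:transported-entropy-alpha} over $[s,S]$. This gives
\begin{equation*}
\cE_{\alpha,+}(\widehat\Omega_S)\le \cE_{\alpha,+}(\widehat\Omega_s,e^{-1}z_e(S))+C\int_s^{S}\delta\,d\tau .
\end{equation*}
Admissibility of $e^{-1}z_e(S)$ at time $s$ holds because $u^S>0$ by \eqref{equ-uS}. Apply \eqref{eq:entropy-quadratic-stability} at time $s$ with $z=e^{-1}z_e(S)$:
\begin{equation*}
c_L\min\{1,|e^{-1}z_e(S)-z_e(s)|^2\}\le \cE_{\alpha,+}(\widehat\Omega_s)-\cE_{\alpha,+}(\widehat\Omega_S)+C\int_s^{s+1}\delta\,d\tau .
\end{equation*}
Since the entropy has a finite limit and $\delta\in L^1$, the right side tends to $0$ as $s\to\infty$. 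Hence $|z_e(s)-e^{-1}z_e(s+1)|\to0$.

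Write $a_k=z_e(s_0+k)$, so that $a_{k+1}=e\,a_k+e\,\varepsilon_k$ with $\varepsilon_k\to0$. Because $|a_k|\le C$ for all $k$, the unstable recursion forces $a_k\to0$. Concretely, iterate backward: $a_k=e^{-m}a_{k+m}-\sum_{j=0}^{m-1}e^{-j}\varepsilon_{k+j}$. Letting $m\to\infty$ gives $|a_k|\le \frac{e}{e-1}\sup_{j\ge k}|\varepsilon_j|\to0$. The same argument works for every starting point $s_0\in[s_1,s_1+1]$, with uniformity coming from the uniform smallness of the error. This proves \eqref{eq:entropy-point-to-zero}. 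The step I expect to need the most care is this one. One must check that the quadratic stability constant is uniform, which follows from the entropy upper bound. One must also check that the backward transport is legitimate for every $s$, including in the range $\alpha<1$, where Lemma~\ref{lem:transported-entropy-alpha} was proved only for the transported centers; that is exactly the case used here.

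Finally, \eqref{eq:entropy-point-interior} gives $\dist(z_e(s),\partial(\widehat\Omega_s)^d)\ge d_L$. Once $|z_e(s)|\le d_L/2$, the origin has distance at least $d_L/2$ from the boundary of the doubled body. So $h(\xi,s)\ge d_L/2$ on $\mathbb S^n_+$, after increasing the initial time. The upper bound $h\le C$ and the gradient bound follow as in Corollary~\ref{cor-bounded}. Use $0\in\widehat\Omega_s$ and $\diam(\widehat\Omega_s)^d\le C$ from Corollary~\ref{cor:normalized-radius-bounds}, together with $|\overline\nabla h|_\sigma\le |Y|\le C$ from $Y=\overline\nabla h+h\xi$.
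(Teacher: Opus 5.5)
Your proposal is correct and follows essentially the same route as the paper: you transport $z_e(s+1)$ backward to $e^{-1}z_e(s+1)$, apply the quadratic stability \eqref{eq:entropy-quadratic-stability}, and use the finite entropy limit to get $|z_e(s)-e^{-1}z_e(s+1)|\to0$, and the $C^0$/$C^1$ bounds then follow from \eqref{eq:entropy-point-interior} and the radius bounds exactly as in the paper. The only cosmetic difference is the last step: you conclude $z_e\to0$ by iterating the unstable recursion backward, whereas the paper takes the $\limsup$ in $|z_e(s)|\le e^{-1}|z_e(s+1)|+o(1)$, and the two arguments are equivalent.
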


\begin{proof}
	The uniform entropy upper bound from Proposition
	\ref{thm:almost-monotonicity-alpha} and the static stability estimate
	\eqref{eq:entropy-quadratic-stability} give a constant $c>0$ such that, for
	all large $s$ and every admissible $z\in\PiH$,
	\begin{equation}\label{eq:strict-gap-use-subunit}
		\cE_{\alpha,+}(\widehat\Omega_s)
		-\cE_{\alpha,+}(\widehat\Omega_s,z)
		\ge c\min\{1,|z-z_e(s)|^2\} .
	\end{equation}
	Let $S\in[s,s+1]$.  Applying \eqref{eq:strict-gap-use-subunit} with $z=e^{s-S}z_e(S)$ and then using \eqref{equ-dentropy} yields
	\begin{equation*}
		\begin{aligned}
			c\min\{1,|z_e(s)-e^{s-S}z_e(S)|^2\}
			&\le \cE_{\alpha,+}(\widehat\Omega_s)
			-\cE_{\alpha,+}(\widehat\Omega_s,e^{s-S}z_e(S)) \\
			&\le \cE_{\alpha,+}(\widehat\Omega_s)
			-\cE_{\alpha,+}(\widehat\Omega_S)
			+C\int_s^S\delta(\tau)\mathrm{d}\tau.
		\end{aligned}
	\end{equation*}
	The right-hand side tends to zero uniformly for $0\le S-s\le1$ by Proposition \ref{thm:almost-monotonicity-alpha} and $\delta\in L^1([s_0,+\infty))$.  Therefore,
	\begin{equation*}
		\lim\limits_{s\to\infty} \sup_{0\le S-s\le1}
		|z_e(s)-e^{s-S}z_e(S)|=0 .
	\end{equation*}
	Taking $S=s+1$ gives
	\begin{equation}\label{eq:ze-unstable-recurrence}
    |z_e(s)-e^{-1}z_e(s+1)|\to 0.
	\end{equation}
    Since $z_e(s)$ is uniformly bounded, taking the upper limit gives
    \begin{equation*}
        \limsup\limits_{s\to\infty}|z_e(s)| \leq \mathrm{e}^{-1} \limsup\limits_{s\to\infty} |z_e(s)|.
    \end{equation*}
    Therefore $\limsup_{s\to\infty}|z_e(s)|=0$, and hence $z_e(s)\to 0$ as $s\to\infty$.
    
	The upper bounds for $h$ and $|Y|$ follow from Corollary \ref{cor:normalized-radius-bounds}. Applying
	Proposition \ref{prop:half-entropy-geometry} to the doubled bodies gives
	constants $d_0>0$ such that
	\begin{equation*}
		\dist\big(z_e(s),\partial(\widehat\Omega_s)^d\big)\ge d_0.
	\end{equation*}
	Together with $z_e(s)\to0$, this yields the lower bound for $h$: for all sufficiently large $s$,
	\begin{align*}
		h(\xi,s)
		=&u_{z_e(s)}(\xi,s)+\langle z_e(s),\xi\rangle \\
		\ge &d_0-|z_e(s)|
		\ge \frac{d_0}{2}.
	\end{align*}
	The bound for $|\overline{\nabla}h|_{\sigma}$ follows from convexity of the hypersurface. This completes the proof of \eqref{eq:subunit-origin-C0}.
\end{proof}

\section{Curvature and higher-order estimates}
\label{sec:apriori-alpha}

Throughout this section, $\alpha>\frac{1}{n+2}$. For
convenience, we first collect the notation used below. All
unmarked geometric quantities refer to the normalized
hypersurface $Y(\cdot,s)$ in the Euclidean half-space. In Gauss
map coordinates,
\begin{equation*}
    B_{ij}
    =
    \overline{\nabla}_i\overline{\nabla}_j h
    +h\sigma_{ij},
    \qquad
    A=B^{-1},
    \qquad
    K=\det A=(\det B)^{-1}.
\end{equation*}
We write
\begin{equation*}
    \rho=e^{-s},
    \qquad
    \omega=\rho q,
\end{equation*}
where $\Lambda$ and $q$ are defined in \eqref{s5.Dq}.
The shifted Weingarten map and its inverse are denoted by
\begin{equation*}
    \widehat A=A+\omega\Id>0,
    \qquad
    \widehat B=\widehat A^{-1},
    \qquad
    \widehat K=\det\widehat A.
\end{equation*}
We also set
\begin{equation*}
    J=\det(\Id+\omega B)
      =\frac{\widehat K}{K},
    \qquad
    P=A\widehat B A.
\end{equation*}
Thus the conformal coefficient and the shifted speed are
\begin{equation*}
    \Theta=\Lambda^{n\alpha+1}J^\alpha,
    \qquad
    W:=\Theta K^\alpha
      =\Lambda^{n\alpha+1}\widehat K^\alpha.
\end{equation*}
With this notation, the normalized support-function equation
takes the form
\begin{equation}\label{eq:F-shifted}
    \partial_s h=h-\frac{W}{\zeta},
\end{equation}
where
\begin{equation*}
    \zeta(s)
    =
    \frac{1}{|\mathbb S^n_+|}
    \int_{\mathbb S^n_+}\frac{W}{K}\,\mathrm{d}\sigma
    =
    \frac{1}{|\mathbb S^n_+|}
    \int_{\mathbb S^n_+}
    \Lambda^{n\alpha+1}
    J\widehat K^{\alpha-1}\,\mathrm{d}\sigma.
\end{equation*}

\subsection{Upper and lower bounds for the shifted speed}

\begin{lemma}[Upper speed bound]\label{prop:F-upper-alpha}
There is $C<\infty$ such that
\begin{equation}\label{eq:F-upper-alpha}
        W(\xi,s)\le C
\end{equation}
for all $s$ sufficiently large.
\end{lemma}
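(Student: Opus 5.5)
Since the normalized speed $W=\Theta K^\alpha$ is, up to a time-dependent factor, just the unnormalized ball speed $\bar K^\alpha$ expressed in half-space coordinates, I will obtain \eqref{eq:F-upper-alpha} by transporting the boundary Tso estimate \eqref{equ-G-upper-time} of Lemma \ref{prop:Tso-alpha} through the Cayley map and the rescaling. The point is that \eqref{equ-G-upper-time} carries the factor $R_t$, the extrinsic size of $M_t$ measured from the center $p$. This factor is exactly what compensates the blow-up $\rho^{-n\alpha}$ coming from the scaling, as long as the inball used has radius comparable to $r_+$.

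\textbf{Step 1 (relating $W$ to $\bar K^\alpha$).} From Proposition \ref{prop:pulled-back-alpha} and the computation in Section \ref{sec:normalization-alpha}, we have $e^{-w}\bar K^\alpha=\Phi_\alpha=2^{-(n\alpha+1)}\rho^{-n\alpha}W$. Since $e^{w}=2/\Lambda$ and $\Lambda\to1$, this gives $W\le C\rho^{n\alpha}\sup_{M_t}\bar K^\alpha$. It therefore suffices to prove
\begin{equation*}
\sup_{M_t}K^\alpha\le C\rho(s)^{-n\alpha},\qquad t=t(s)\ \text{large},
\end{equation*}
where $K$ is the Gauss curvature in the ball.

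\textbf{Step 2 (choosing the center and radius).} By Proposition \ref{thm:C0-alpha}, we have $h\ge c$ on $\Sph^n_+$, so the ball $B_c(0)\cap\HH$ lies in $\widehat\Omega_s$. Scaling back, $B_{c\rho}(0)\cap\HH\subset\Omega^{\HH}_t$. I will apply $F$, whose differential near $0$ is a conformal factor close to $1$, together with the free-boundary structure. I pick the point $p_t=F(c'\rho E)$, which lies inside $\B^{n+1}$ at distance $\sim\rho$ from $-E$, and use it to get a Euclidean ball $B_{2\sigma}(p_t)\subset\Omega_{t'}$ with $\sigma=c''\rho(s)$. This inclusion holds for all $t'\le t$, since the bodies are nested. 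On the other side, Corollary \ref{cor:normalized-radius-bounds} gives $|Y|\le C$, so $R_t=\sup_{M_t}|X-p_t|\le C\rho$.

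\textbf{Step 3 (applying the Tso estimate).} I apply \eqref{equ-G-upper-time} on $[0,t]$ with $T_\sigma=t$ and this $p_t,\sigma$. This yields
\begin{equation*}
\sup_{M_t}K^\alpha\le C\max\{R_t\sigma^{-1}\sup K_0^\alpha,\ R_t\sigma^{-(n\alpha+1)}\}\le C\max\{1,\rho^{-n\alpha}\}\le C\rho^{-n\alpha},
\end{equation*}
which is exactly the bound required in Step 1. The constant in Lemma \ref{prop:Tso-alpha} depends only on $n,\alpha,M_0$, so the estimate is uniform in $p_t$.

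\textbf{Main obstacle.} The delicate step is Step 2. First, the inball must be placed in the \emph{ball} picture with radius comparable to $\rho$, uniformly for earlier times. Nestedness handles the earlier times, and $F$ being bi-Lipschitz near $0$ gives the comparability of radii. Second, Lemma \ref{prop:Tso-alpha} as proved requires $p\in\overline{\B}^{n+1}$, which is needed for $\nabla_\mu u_p\ge0$; placing $p_t$ in the interior at height $\sim\rho$ secures this. Third, one must check that $u_{p_t}\ge2\sigma$ follows from the ball inclusion, which holds because $\Omega_{t'}$ is convex and $M_{t'}\subset\partial\Omega_{t'}$.
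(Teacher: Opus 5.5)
Your proposal is correct and follows essentially the paper's argument: find a ball of radius comparable to $\rho(s)$ inside $\Omega_t$ (hence inside every earlier $\Omega_\tau$ by nestedness) around a center $p_t$ with $R_t\le C\rho(s)$, apply the time-slice Tso bound \eqref{equ-G-upper-time} to get $\bar K^\alpha\le C\rho^{-n\alpha}$, and convert through the identity $W=2^{n\alpha}\Lambda\rho^{n\alpha}\bar K^\alpha$. The only difference is how the inball is produced. You build it from $h\ge c$ (Proposition \ref{thm:C0-alpha}) with the explicit center $F(c'\rho E)$, whereas the paper takes it from the inradius bound of Corollary \ref{cor:normalized-radius-bounds}. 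One small correction: $dF_0=2\,\Id$, so the conformal factor near $0$ is close to $2$, not $1$; this is harmless, since only the bi-Lipschitz property of $F$ near $0$ is used.
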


\begin{proof}
    Let $t=t(s)$ and let $\bar{K}$ denote the Gauss curvature of the original hypersurface $M_t\subset\B^{n+1}$.  By Corollary~\ref{cor:normalized-radius-bounds}, for all sufficiently large $s$ the inradius and
circumradius of the unnormalized half-space body are both comparable
to $\rho(s)$. Since the Cayley map and its inverse are uniformly
bi-Lipschitz on the relevant neighborhood of the origin, there exists
a point $p_t\in\Omega_t$ such that
\begin{equation*}
    B_{2c\rho(s)}(p_t)\subset\Omega_t,
    \qquad
    \sup_{M_t}|X-p_t|\le C\rho(s).
\end{equation*}
Since the bodies are nested, the same ball is contained in
$\Omega_\tau$ for every $\tau\le t$. Applying
\eqref{equ-G-upper-time} in Lemma \ref{prop:Tso-alpha} with
$\sigma=c\rho(s)$ and $R_t\le C\rho(s)$ gives
\begin{equation}\label{equ-original-K-upper-alpha}
    \bar K^\alpha(\cdot,t(s))
    \le C\rho(s)^{-n\alpha}.
\end{equation}
    
    The conformal curvature relation is
    \begin{align*}
        \bar{K} = &e^{-nw} \rho(s)^{-n}\det(A+\rho(s)q \mathrm{Id}) \\
        = &e^{-nw}\rho(s)^{-n} \widehat K = 2^{-n}\Lambda^n\rho(s)^{-n}\widehat{K}.
    \end{align*}
    Together with $\Lambda=1+O(\delta(s))$ and \eqref{equ-original-K-upper-alpha}, this identity gives
    \begin{equation}
        \label{equ-wideKupper}
        \widehat{K} \leq C
    \end{equation}
    and
    \begin{equation*}
        W=\Lambda^{n\alpha+1}\widehat K^\alpha \leq C\rho(s)^{n\alpha}\bar{K}^\alpha(x,t) \leq C.
    \end{equation*}
\end{proof}

\begin{lemma}\label{lem:Lambda-bounds-alpha}
There is $C\ge1$ such that
\begin{equation}\label{eq:Lambda-bounds-alpha}
        C^{-1}\le \zeta(s)= \dfrac{1}{|\mathbb{S}_+^n|}\int_{\mathbb{S}_+^n} \Theta K^{\alpha-1}\mathrm{d}\sigma  \le C
\end{equation}
for all sufficiently large $s$.
\end{lemma}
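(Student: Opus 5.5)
The plan is to write $\zeta$ in the shifted form
\begin{equation*}
\zeta(s)=\frac{1}{|\Sph^n_+|}\int_{\Sph^n_+}\Lambda^{n\alpha+1}J^\alpha K^{\alpha-1}\,\dd\sigma ,
\end{equation*}
and to use three facts already established: $|\Lambda-1|\le C\delta(s)$, so that $\Lambda^{n\alpha+1}\in[1/2,2]$ for large $s$; $0<J\le 1$ from \eqref{eq:J-basic}; and the bounds $c\le h\le C$ from Proposition \ref{thm:C0-alpha} together with the volume normalization \eqref{eq:half-minkowski-normalized}. The argument splits according to the sign of $\alpha-1$, with H\"older or Jensen inequalities against the probability measure $\dd v=\frac{h}{K|\Sph^n_+|}\dd\sigma$. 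This is the same measure used in the proof of Lemma \ref{lem:transported-entropy-alpha}.

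\emph{Upper bound.} Write $\Theta K^{\alpha-1}=W/K$. Lemma \ref{prop:F-upper-alpha} gives $W\le C$, so $\zeta\le C\frac{1}{|\Sph^n_+|}\int K^{-1}\dd\sigma$. Since $h\ge c$, we have $\int K^{-1}\le c^{-1}\int h/K=c^{-1}|\Sph^n_+|$. This bound is clean for every $\alpha$.

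\emph{Lower bound.} I would write
\begin{equation*}
\zeta=\int \Lambda^{n\alpha+1}J^\alpha K^\alpha h^{-1}\,\dd v\ \ge\ \tfrac12\int (JK)^{\alpha}h^{-1}\,\dd v .
\end{equation*}
For $\alpha\ge1$, apply Jensen to the convex function $x\mapsto x^\alpha$, giving $\int (JK/h^{1/\alpha})^\alpha\dd v\ge(\int JK h^{-1/\alpha}\dd v)^\alpha$. Now $\int JK h^{-1/\alpha}\dd v=\frac{1}{|\Sph^n_+|}\int J h^{1-1/\alpha}\dd\sigma$, and $h^{1-1/\alpha}\ge c$, so it remains to bound $\int J\,\dd\sigma$ from below.

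This is done by the estimate already used in \eqref{eq:BJ-B-direct-alpha}:
\begin{equation*}
\int(1-J)\,\dd\sigma\le C\rho\int(\overline\Delta h+nh)\,\dd\sigma=Cn\rho\int h\,\dd\sigma\le C\rho .
\end{equation*}
Here the Laplacian term integrates to zero by the Neumann condition. Hence $\int J\ge |\Sph^n_+|/2$ for large $s$.

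For $\frac1{n+2}<\alpha<1$, $x^\alpha$ is concave, so I would instead use H\"older in the reverse direction. The lower bound $\widehat K=JK\le C$ from \eqref{equ-wideKupper} gives
\begin{equation*}
(JK)^\alpha\ge (JK)\,C^{\alpha-1},
\end{equation*}
since $x^{\alpha}\ge C^{\alpha-1}x$ for $0<x\le C$. Then $\zeta\ge c\int JK h^{-1}\dd v=c\frac{1}{|\Sph^n_+|}\int J\,\dd\sigma\ge c/2$, by the same $\int(1-J)$ estimate.

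\emph{Main obstacle.} The key step is the lower bound on $\int J$, because $J$ contains $B$ and there is no pointwise control on it. The integration-by-parts trick with the Neumann condition resolves this using only $\rho\to0$ and the $C^0$ bound on $h$. A minor check is that $J>0$ and $h\ge c$ hold for all large $s$, which follows from Lemma \ref{lem:large-s-gauss-alpha} and Proposition \ref{thm:C0-alpha}.
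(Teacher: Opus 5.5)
Your proof is correct and uses the same ingredients as the paper:
\begin{itemize}
\item the Neumann integration by parts giving $\frac{1}{|\Sph^n_+|}\int_{\Sph^n_+}(1-J)\,\dd\sigma\le C\rho$;
\item the pointwise bound $W\le C$ (equivalently $\widehat K\le C$);
\item the two-sided $C^0$ bound on $h$;
\item the volume normalization, with a split according to the sign of $\alpha-1$.
\end{itemize}

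The organization differs. The paper factors $\zeta=\bigl(\frac{1}{|\Sph^n_+|}\int J\,\dd\sigma\bigr)\int\Lambda^{n\alpha+1}\widehat K^{\alpha-1}\,\dd\nu_s$, using the $J$-weighted probability measure $\dd\nu_s$. It proves two-sided bounds for the negative moment $\int\widehat K^{-1}\,\dd\nu_s$. It then handles whichever side is not pointwise by H\"older when $\alpha>1$, or by Jensen for the concave map $x\mapsto x^{1-\alpha}$ when $\alpha<1$.

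Your upper bound is $\zeta=\frac{1}{|\Sph^n_+|}\int W/K\,\dd\sigma\le C\,\frac{1}{|\Sph^n_+|}\int K^{-1}\,\dd\sigma\le C/c$. It is a single line, valid for every $\alpha>0$. Unlike the paper's sub-unit case, it does not go through the lower bound for $\int J$.

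Your lower bounds are the same inequalities as the paper's, written against a different reference measure: Jensen for $x^\alpha$ against the volume measure when $\alpha\ge1$, and the pointwise inequality $\widehat K^{\alpha}\ge C^{\alpha-1}\widehat K$ when $\alpha<1$.

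There are two purely cosmetic slips. In the sub-unit case you use a pointwise inequality, not ``H\"older in the reverse direction.'' Also, $\widehat K\le C$ is an upper bound on $\widehat K$, not a lower bound.
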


\begin{proof}
We first estimate the average of $J$. By \eqref{eq:J-basic}, $0\leq 1-J\leq C\rho\tr B$.   
Since $\rho$ depends only on time, and 
\begin{equation*}
	\tr B=\overline\Delta h+nh,
	\qquad
	\overline\nabla_\eta h=0
	\quad\text{on }\partial\mathbb S^n_+,
\end{equation*}
integration by parts and the Neumann boundary condition
give
\begin{align*}
	\frac{1}{|\mathbb S^n_+|}
	\int_{\mathbb S^n_+}(1-J)\,d\sigma
	&\leq
	\frac{C}{|\mathbb S^n_+|}
	\int_{\mathbb S^n_+}\rho\tr B\,d\sigma\\
	&=
	\frac{Cn}{|\mathbb S^n_+|}
	\int_{\mathbb S^n_+}\rho h\,d\sigma\\
	&\leq C\rho(s)
	\leq C\delta(s).
\end{align*}
Here we used the uniform upper bound for $h$ from
\eqref{eq:subunit-origin-C0} and the estimate $\rho(s)\leq C\delta(s)$
established above. 

Then for all sufficiently large $s$,
\begin{equation}
    \label{equ-Jintegral}
    \dfrac{1}{2} \leq \dfrac{1}{|\mathbb{S}_+^n|}\int_{\mathbb{S}_+^n} J \mathrm{d}\sigma \leq 1.
\end{equation}
Using \eqref{eq:half-minkowski-normalized} and the identity $K=\widehat K/J$ we also have
\begin{equation}\label{eq:hJ-hatK}
        \dfrac{1}{|\mathbb{S}_+^n|}\int_{\mathbb{S}_+^n}\frac{h J}{\widehat K} \mathrm{d}\sigma = 1.
\end{equation}
Define the probability measure $d\nu_s=\frac{J\,d\sigma}{\int_{\mathbb{S}_+^n} J \mathrm{d}\sigma}$.  Combining \eqref{eq:subunit-origin-C0} with \eqref{eq:hJ-hatK} yields
\begin{equation}\label{eq:negative-moment-hatK-section7}
        c\le\int_{\mathbb{S}^n_+}\widehat K^{-1}\,d\nu_s\le C.
\end{equation}
Now rewrite $\zeta(s)$ as
\begin{equation*}
        \zeta(s)=\dfrac{1}{|\mathbb{S}_+^n|}\int_{\mathbb{S}_+^n}  J \mathrm{d}\sigma \int_{\mathbb{S}^n_+}\Lambda^{n\alpha+1}\widehat K^{\alpha-1}\,d\nu_s.
\end{equation*}
Since $\Lambda=1+O(\delta(s))$ and \eqref{equ-Jintegral}, it suffices to bound the last term from above and below.

If $\alpha=1$, the assertion is immediate.  Suppose that $\alpha>1$.  H\"older's inequality gives
\begin{align*}
    1 =& \int_{\mathbb{S}_+^n} (\widehat{K}^{-1})^{\frac{\alpha-1}{\alpha}} \widehat{K}^{\frac{\alpha-1}{\alpha}} \mathrm{d}\nu_s \\
    \le & \left( \int_{\mathbb{S}_+^n} \widehat{K}^{-1}\,\mathrm{d}\nu_s \right)^{\frac{\alpha-1}{\alpha}} \left( \int_{\mathbb{S}_+^n} \widehat{K}^{\alpha-1}\,\mathrm{d}\nu_s \right)^{\frac{1}{\alpha}}.
\end{align*}
Hence \eqref{eq:negative-moment-hatK-section7} yields
\begin{equation*}
        \int_{\mathbb{S}_+^n}\widehat K^{\alpha-1}\,d\nu_s\ge c .
\end{equation*}
The upper bound follows from \eqref{equ-wideKupper}.  This proves \eqref{eq:Lambda-bounds-alpha} when $\alpha>1$.

Suppose now that $\frac{1}{n+2} < \alpha < 1$.
The upper speed bound $\widehat{K} \le C$ gives
$\widehat{K}^{\alpha-1} \ge C^{\alpha-1}$, which is the desired lower bound.  For the upper bound, the function $x \mapsto x^{1-\alpha}$ is concave on $(0,\infty)$ because $0 < 1-\alpha < 1$.  Jensen's inequality and \eqref{eq:negative-moment-hatK-section7} give
\begin{equation*}
    \int_{\mathbb{S}_+^n} \widehat{K}^{\alpha-1}\,\mathrm{d}\nu_s
    = \int_{\mathbb{S}_+^n} (\widehat{K}^{-1})^{1-\alpha}\,\mathrm{d}\nu_s
    \le \left( \int_{\mathbb{S}_+^n} \widehat{K}^{-1}\,\mathrm{d}\nu_s \right)^{1-\alpha}
    \le C .
\end{equation*}
Thus \eqref{eq:Lambda-bounds-alpha} follows for every $\alpha > \frac{1}{n+2}$.
\end{proof}

\begin{lemma}[Lower speed bound]\label{prop:F-lower-alpha}
There is $c>0$ such that
\begin{equation}\label{eq:F-lower-alpha}
       W(\xi,s)\ge c
\end{equation}
for all $s$ sufficiently large.
\end{lemma}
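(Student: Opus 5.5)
The plan is a shifted Tso-type argument, applied to the lower bound for the speed rather than the upper bound. In the closed case (Tso, Andrews--Guan--Ni) one applies the maximum principle to
\[
Q=\frac{W}{u-\varepsilon}
\]
on the hemisphere, or to $\frac{W}{h-\varepsilon}$-type quotients. For the lower bound, the standard route for powers of Gauss curvature is to study $\mathcal{Q}=\log W - \gamma\log h$, or more simply the evolution of $W$ in Gauss-map coordinates, and show that a small minimum is impossible. Concretely, from \eqref{eq:F-shifted} the speed $W=\Lambda^{n\alpha+1}\det(\widehat B)^{-\alpha}$ evolves by a parabolic equation. Schematically,
\[
\partial_s W=\alpha W\,\widehat B^{ij}\bigl(\overline\nabla_i\overline\nabla_j(W/\zeta)+(W/\zeta)\sigma_{ij}\bigr)-n\alpha W+\text{(error terms from }\Lambda,\ \omega),
\]
up to terms coming from $\partial_s$ of $\omega=\rho q$ and of $\Lambda$. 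These terms are $O(\delta(s))$ times quantities involving $W$ and $\widehat B$.

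\textbf{Step 1: the favorable zero-order term.} At a spatial minimum of $W$, the second-derivative term is nonnegative. The zero-order term contributes $\alpha W^2\zeta^{-1}\tr\widehat B - n\alpha W$. Using $\tr\widehat B\ge n\widehat K^{-1/n}=n\Lambda^{(n\alpha+1)/(n\alpha)}W^{-1/(n\alpha)}$, this gives
\[
\partial_s W\ge n\alpha W\bigl(c\,\zeta^{-1}W^{1-\frac1{n\alpha}}-1\bigr)-\text{errors}.
\]
The sign is wrong for small $W$ when $n\alpha>1$, so a plain minimum principle fails.

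\textbf{Step 2: the quotient.} I would instead use $\Psi=W/(h-\tfrac{c_0}{2})$, or $W\,h^{-1}$ combined with an auxiliary power. The point is that the $C^0$ bounds $c\le h\le C$ from Proposition~\ref{thm:C0-alpha} and the bounds $C^{-1}\le\zeta\le C$ from Lemma~\ref{lem:Lambda-bounds-alpha} let the term $-\widehat B^{ij}(\overline\nabla_{ij}h+h\sigma_{ij})=-\widehat B^{ij}B_{ij}$ produce a large positive contribution at a minimum. Indeed $\widehat B^{ij}B_{ij}=\tr(\widehat B A^{-1})=\tr(\Id+\omega B)$, which is close to $n$ since $|\omega|\le C\rho$. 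Computing $\partial_s\log\Psi$ at a spatial minimum therefore yields
\[
\partial_s\log\Psi\ge \frac{c}{\zeta}\frac{W}{h-c_0/2}\,\tr(\Id+\omega B)\cdot\alpha - C \ge c'\Psi - C - C\delta(s)\,(\ldots).
\]
This is the exact analogue of Tso's argument with inequalities reversed, as in Chen--Huang and AGN for the lower speed bound. It gives $\Psi\ge\min\{\Psi(s_0),\,C/c'\}$. I still need to double-check the direction: the usual AGN lower bound uses $\log W-\log(h-\varepsilon)$ or applies the upper bound to the dual body. If the direct sign fails, I would fall back on Harnack/duality.

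\textbf{Step 3: boundary and errors.} The boundary is harmless. Along the equator $\overline\nabla_\eta h=0$, and the even reflection across $\PiH$ makes $W$ even, because $\Lambda$ and $q$ are reflection-invariant only to leading order. The cleaner check is to verify $\overline\nabla_\eta W=0$ directly from $\overline\nabla_\eta h=0$ and the evenness of the extension, which follows from the exact Neumann condition \eqref{eq:flat-bd-alpha}. A boundary minimum is then handled by the Hopf lemma, since $\partial_s$ preserves the Neumann condition.

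\textbf{Main obstacle.} The hard part is the error terms involving $\omega B$, since $B$ is not yet bounded: there is no $C^2$ control. I would handle them as in Lemma~\ref{prop:F-upper-alpha}, using the condition $\widehat A>0$, i.e. the eigenvalues of $\omega B$ lie in $(-1,0]$. This bounds every occurrence of $\omega B$ by $1$ pointwise, and the factors $\partial_s\omega$ and $\partial_s\Lambda$ are $O(\rho)$ times $C^1$-controlled quantities. Alternatively, I would transfer the estimate back to the original flow. There, a lower bound $\bar K\ge c\rho^{-n}$ near extinction should follow from an interior-ball and Tso-type argument on $u_p$, with $\inf$ replaced by a comparison with the shrinking self-similar solution. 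It would then be converted through $\bar K=2^{-n}\Lambda^n\rho^{-n}\widehat K$.
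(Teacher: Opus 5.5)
\textbf{Gap: the quotient in Step 2 cannot produce a lower bound.} Step 2 is the heart of your argument, and it does not work. The quotient $\Psi=W/(h-\varepsilon)$ is Tso's quotient for an \emph{upper} bound. At a spatial minimum its structure is unfavorable.

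Use the correct linearized operator
\[
\mathcal L=\partial_s-\frac{\alpha W}{\zeta}P^{ij}\overline\nabla_i\overline\nabla_j,\qquad P=A\widehat BA .
\]
Note that the derivative of $W$ in $B_{ij}$ is $-\alpha WP^{ij}$, not a multiple of $\widehat B$. At a critical point of $\log\Psi$ the gradient terms cancel, and one finds
\[
\mathcal L\log\Psi=-n\alpha-\frac{h}{h-\varepsilon}+\frac{W}{\zeta(h-\varepsilon)}\bigl(1+\alpha\tr(\widehat BA)\bigr)-\frac{\varepsilon\alpha W}{\zeta(h-\varepsilon)}\tr P+(\text{terms from }\Lambda,\omega).
\]
Each term works against you:
\begin{itemize}
\item The constant part is at most $-n\alpha-1$.
\item The $\tr P$ term is negative and uncontrolled before any $C^2$ estimate.
\item The only positive term is $O(W)$, so it is small precisely at a small minimum.
\end{itemize}
Hence nothing prevents $\min\Psi$ from decreasing.

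Your ODE conclusion is also reversed. The inequality $\partial_s\log\Psi\ge c'\Psi-C$ says nothing once $\min\Psi<C/c'$, so it cannot give $\Psi\ge\min\{\Psi(s_0),C/c'\}$. That kind of barrier comes from $\partial_s\log\Psi\le C-c'\Psi$, which is an upper-bound argument. Negative powers of $h$ only make the constant worse.

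Your fallbacks (Harnack/duality, transfer to the original flow) are not developed. Lemma~\ref{lem:K-lower-alpha} does not supply the needed rate $\bar K\ge c\rho^{-n}$, which is equivalent to $W\ge c$. Its comparison ODE blows up at a time that is in general later than $T$.

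\textbf{The missing idea: a positive power of $h$.} The paper applies the minimum principle to $\log(Wh^\ell)$ with $\ell=n\alpha+2$.
\begin{itemize}
\item Since $\partial_sh=h-W/\zeta$, the term $\ell\,\mathcal Lh/h$ contributes $+\ell$, which beats the $-n\alpha$ coming from $\mathcal LW/W$.
\item The $\tr P$ terms now enter with the good coefficient $(\ell+1)\alpha W/\zeta\ge0$, and the gradient terms are nonnegative.
\item The only bad terms are $-\ell\frac{W}{\zeta h}\bigl(1+\alpha\tr(\widehat BA)\bigr)$. These are $O(W)$, using $h\ge c$, $\zeta\ge c$, and $\tr(\widehat BA)=n-\omega\tr\widehat B$.
\end{itemize}
This gives $\mathcal L\log(Wh^\ell)\ge 1-C_2W$ at a minimum, which rules out small minima.

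\textbf{Shift terms.} Your treatment is not sufficient here either.
\begin{itemize}
\item Knowing that the eigenvalues of $\omega B$ lie in $(-1,0]$ does not bound $|\omega|\tr\widehat B=\sum_i|\omega|/(\kappa_i+\omega)$. You need the quantitative bound $\widehat A\ge c\rho\Id$, which follows from $\widehat A=\rho e^w\bar A$ and Lemma~\ref{lem:kappa-lower-alpha}.
\item $\partial_s\Lambda$ and $\partial_s\omega$ contain $\overline\nabla\partial_sh=\overline\nabla h-\overline\nabla W/\zeta$, and $\overline\nabla W$ is not $C^1$-controlled. The paper eliminates $\overline\nabla W$ via the minimum condition $W_i/W=-\ell h_i/h$. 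This yields $|\partial_s\log\Lambda|\le C\rho(1+\frac{W}{\zeta h})$ and $|\partial_s\omega+\omega|\le C\rho^2(1+\frac{W}{\zeta h})$.
\end{itemize}
Your boundary observation is fine: differentiating $\overline\nabla_\eta h=0$ in time gives $\overline\nabla_\eta W=0$.
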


\begin{proof}
Recall that
\begin{equation*}
    \omega=\rho q,
    \qquad
    \widehat B=\widehat A^{-1},
    \qquad
    P=A\widehat B A.
\end{equation*}
We define the operator
\begin{equation*}
        \mathcal L
        =\partial_s-\frac{\alpha W}{\zeta}P^{ij}\overline{\nabla}_i\overline{\nabla}_j.
\end{equation*}

We first compute $\mathcal{L}W$. Using \eqref{eq:F-shifted} we obtain
\begin{equation*}
        \partial_sB_{ij}
        =\overline{\nabla}_i\overline{\nabla}_j \partial_sh+\partial_sh\sigma_{ij}
        =B_{ij}-\frac{1}{\zeta}(W_{ij}+W\sigma_{ij}).
\end{equation*}
Consequently
\begin{equation*}
        \partial_sA=-A(\partial_sB)A
        =-A+\frac{1}{\zeta} A(\overline{\nabla}^2 W+ W\sigma)A,
\end{equation*}
and therefore
\begin{equation}\label{eq:hatA-evol-lower-alpha}
        \partial_s\widehat A
        =-A+\frac{1}{\zeta} A(\overline{\nabla}^2 W+ W\sigma)A+(\partial_s\omega)\Id .
\end{equation}
Differentiating
\begin{equation*}
        \log W=(n\alpha+1)\log \Lambda+\alpha\log\widehat K
\end{equation*}
and using \eqref{eq:hatA-evol-lower-alpha}, we obtain the exact identity
\begin{align}
        \frac{\mathcal L W}{W}
        &=
        (n\alpha+1)\,\partial_s\log \Lambda
        -\alpha\tr(\widehat B A)
        +\frac{\alpha W}{\zeta}\tr P
        +\alpha(\partial_s\omega)\tr\widehat B   \nonumber \\
        &=
        -n\alpha
        +(n\alpha+1)\,\partial_s\log \Lambda
        +\alpha(\partial_s\omega+\omega)\tr\widehat B
        +\frac{\alpha W}{\zeta}\tr P,   \label{eq:LF-rough}
\end{align}
where in the last equality we used
\begin{equation*}
        A=\widehat A-\omega\Id,\qquad
        \tr(\widehat B A)=n-\omega\tr\widehat B .
\end{equation*}
Similarly, using $h_{ij}=B_{ij}-h\sigma_{ij}$ we obtain
\begin{align}
        \mathcal{L} h
        &=h-\frac{W}{\zeta}
          -\frac{\alpha W}{\zeta}P^{ij}h_{ij} \nonumber  \\
        &=h-\frac{W}{\zeta}
          -\frac{\alpha W}{\zeta}P^{ij}B_{ij}
          +\frac{\alpha W}{\zeta}h\tr P     \nonumber     \\
        &=h-\frac{W}{\zeta}
          -\frac{\alpha W}{\zeta}\tr(\widehat B A)
          +\frac{\alpha W}{\zeta}h\tr P.  \label{eq:Lh-exact-lower-alpha}
\end{align}

Fix $\ell>0$ and consider the function $\log(Wh^\ell)$. At its spatial minimum,  we have
\begin{equation}\label{eq:min-condition-Fh-alpha}
        \frac{W_i}{W}+\ell\frac{h_i}{h}=0 .
\end{equation}
Using \eqref{eq:LF-rough} and \eqref{eq:Lh-exact-lower-alpha} we obtain at such a point
\begin{align}
       \mathcal L\log(Wh^\ell)
        &=
        \frac{\mathcal L W}{W}
        +\ell\frac{\mathcal L h}{h}
        +\frac{\alpha W}{\zeta}P^{ij}
          \left(
             \frac{W_iW_j}{W^2}
             +\ell\frac{h_i h_j}{h^2}
          \right)                        \nonumber              \\
        &=
        \ell-n\alpha
        +(n\alpha+1)\,\partial_s\log \Lambda
        +\alpha(\partial_s\omega+\omega)\tr\widehat B    \nonumber   \\
        &\quad
        -\ell\frac{W}{\zeta h}
        -\ell\frac{\alpha W}{\zeta h}\tr(\widehat B A)
        +(\ell+1)\frac{\alpha W}{\zeta}\tr P        \nonumber      \\
        &\quad
        +\frac{\alpha W}{\zeta}P^{ij}
          \left(
             \frac{W_iW_j}{W^2}
             +\ell\frac{h_i h_j}{h^2}
          \right).           \label{eq:LlogFh-exact-alpha}
\end{align}
The last line is nonnegative.  The term involving $\tr P$ is also
nonnegative and will be discarded from below.

It remains to estimate the terms coming from $\Lambda$ and $\omega=\rho q$.
At the minimum point of $\log(Wh^\ell)$, \eqref{eq:min-condition-Fh-alpha} gives
\begin{equation*}
        \overline{\nabla} \partial_s h
        =\overline{\nabla} h-\frac{\overline{\nabla} W}{\zeta}
        =\left(1+\ell\frac{W}{\zeta h}\right)\overline{\nabla} h.
\end{equation*}
Since $h$ and $\overline{\nabla} h$ are uniformly bounded by the $C^0$ estimate and convexity, we have
\begin{equation}
    |\partial_s h|+|\overline{\nabla}\partial_s h|_{\sigma}\leq C\left( 1+\dfrac{W}{\zeta h} \right).
    \label{s6.partialshbd}
\end{equation}
Recalling the definitions
\begin{align*}
        \Lambda=&1+2\rho \langle \overline{\nabla}h+h\xi,E \rangle+\rho^2(h^2+|\overline{\nabla}h|_{\sigma}^2), \\
        q=&-\frac{2(\langle \xi,E \rangle+\rho h)}{\Lambda},\qquad \rho_s=-\rho,
\end{align*}
and using the $C^0$ and $C^1$ estimates, \eqref{s6.partialshbd}, and $\Lambda=1+O(\delta(s))$ at the same point, we get
\begin{align}
    |\partial_s\log \Lambda| 
    \le C\rho\left(1+\frac{W}{\zeta h}\right).  \label{eq-Lambdabd}
\end{align}
Therefore, combining the $C^0$ and $C^1$ estimates with $\Lambda=1+O(\delta(s))$, \eqref{s6.partialshbd}, and \eqref{eq-Lambdabd}, yields
\begin{align*}
        |\,\partial_s\omega+\omega\,|
        =&\rho|\partial_s q|\\
        =&  \rho\left| \dfrac{2\rho}{\Lambda}(h-\partial_s h)-q\partial_s\log\Lambda \right|\\
        \le & C \rho^2 \left(1+\frac{W}{\zeta h}\right).
\end{align*}
The conformal relation $\widehat A=\rho e^w\overline A$ and the lower curvature bound \eqref{eq:kappa-lower-alpha} for $\overline{A}$ give
\begin{equation*}
        \widehat A\ge c\rho\Id , 
        \qquad
        \rho\tr\widehat B\le C.
\end{equation*}
Also,
\begin{equation}
        \tr(\widehat B A)
        =n-\omega\tr\widehat B,
        \qquad
        |\omega|\tr\widehat B\le C.
        \label{s6.BA}
\end{equation}
Combining \eqref{eq-Lambdabd}-\eqref{s6.BA} with \eqref{eq:LlogFh-exact-alpha} gives, at a spatial minimum of $\log(Wh^\ell)$,
\begin{equation}\label{eq:logFhl}
        \mathcal L\log(Wh^\ell)
        \ge
        \ell-n\alpha
        -C_1\rho
        -C_\ell\frac{W}{\zeta h}.
\end{equation}

Choose $\ell=n\alpha+2$ and $s_3$ sufficiently large that $C_1\rho\le 1$.  By Proposition \ref{thm:C0-alpha} and Lemma \ref{lem:Lambda-bounds-alpha}, there is a
constant  $\Lambda_0>1$ such that
\begin{equation*}
        \zeta\ge\Lambda_0^{-1},
        \qquad
        \Lambda_0^{-1}\le h\le\Lambda_0
\end{equation*}
for all $s\ge s_3$. Then \eqref{eq:logFhl} implies
\begin{equation*}
    \mathcal L\log(Wh^{\ell})\ge 1-C_2 W
\end{equation*}
at a spatial minimum point.

Choose $\lambda>0$ so small that
\begin{equation*}
        \lambda < \frac{1}{4C_2\Lambda_0^{\ell}}
\end{equation*}
and, after decreasing $\lambda$ if necessary, $ Wh^\ell(\cdot,s_3)\ge 2\lambda$.  

We claim that
\begin{equation*}
        Wh^\ell\ge\lambda
        \qquad\hbox{on }\Sph^n_+\times[s_3,\infty).
\end{equation*}
Suppose otherwise.  Let $s_*>s_3$ be the first time at which the minimum of
$Wh^\ell$ equals $\lambda$, and let $\xi_*$ be a minimum point.  
We observe that the following maximum principle argument remains
valid if $\xi_*$ lies on the boundary. Indeed, differentiating the
boundary condition $\overline{\nabla}_{\eta}h=0$ with respect to $s$
and using \eqref{eq:F-shifted}  
gives $\overline{\nabla}_{\eta}W=0$ on $\partial\mathbb S^n_+$.
Consequently,
\begin{equation*}
    \overline{\nabla}_{\eta}\log(Wh^\ell)=0
    \qquad\text{on }\partial\mathbb S^n_+.
\end{equation*}
Since the equator is totally geodesic and
$\overline{\nabla}_{\eta}h=0$, we also have
\begin{equation*}
    B_{\alpha\eta}=0
    \qquad\text{on }\partial\mathbb S^n_+.
\end{equation*}
It follows that $A$, $\widehat A$, $\widehat B$, and
$P=A\widehat BA$ are block diagonal with respect to the decomposition
into the tangential and conormal directions. In particular, $P^{\alpha\eta}=0$. 
At a boundary minimum of $\log(Wh^\ell)$, the one-sided second derivative
test gives
\begin{equation*}
    \left(\overline{\nabla}_{\alpha}
    \overline{\nabla}_{\beta}\log(Wh^\ell)\right)\geq0,
    \qquad
    \overline{\nabla}_{\eta}
    \overline{\nabla}_{\eta}\log(Wh^\ell)\geq0.
\end{equation*}
Therefore,
\begin{equation*}
    P^{ij}\overline{\nabla}_i
    \overline{\nabla}_j\log(Wh^\ell)\geq0.
\end{equation*}
Since $s_*$ is the first time at which the minimum of $Wh^\ell$
reaches $\lambda$, we also have
\begin{equation*}
    \partial_s\log(Wh^\ell)(\xi_*,s_*)\leq0.
\end{equation*}
Thus, whether $\xi_*$ is an interior point or a boundary point,
\begin{equation*}
    \mathcal L\log(Wh^\ell)(\xi_*,s_*)\leq0.
\end{equation*}
On the other hand, at $(\xi_*,s_*)$,
\begin{equation*}
    0
    \geq \mathcal L\log(Wh^\ell)
    \geq 1-C_2W
    =1-C_2\frac{\lambda}{h^\ell}
    \geq 1-\frac14
    >0,
\end{equation*}
which is a contradiction. Hence $Wh^\ell\ge\lambda$.  Since $h\le\Lambda_0$,
we obtain
\begin{equation*}
        W\ge \lambda\Lambda_0^{-\ell}=:c>0.
\end{equation*}
This proves \eqref{eq:F-lower-alpha}.
\end{proof}

Equations \eqref{eq:F-shifted},
\eqref{eq:F-upper-alpha}, and \eqref{eq:F-lower-alpha} imply that, for large $s$,
\begin{equation*}
        0<c\le\widehat K\le C.
\end{equation*}

\subsection{The second derivative estimate}

\begin{lemma}[Uniform convexity]\label{thm:C2-alpha}
There exists $C\ge 1$ such that, for all sufficiently large $s$,
\begin{equation}\label{eq:B-two-sided}
        C^{-1}\sigma_{ij}
        \le B_{ij}=\overline{\nabla}_i\overline{\nabla}_jh+h\sigma_{ij}
        \le C\sigma_{ij}.
\end{equation}
\end{lemma}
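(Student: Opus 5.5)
The plan is to prove the upper bound in \eqref{eq:B-two-sided} by a maximum principle for the largest eigenvalue of $B$, and then get the lower bound from the determinant. Two facts proved above are used throughout. First, by Lemmas \ref{prop:F-upper-alpha} and \ref{prop:F-lower-alpha} and the remark after them, $c\le\widehat K\le C$. Second, $K=\widehat K/J$ with $0<J\le1$ by \eqref{eq:J-basic}, so $\det B=J/\widehat K\le C$.

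\textbf{Lower bound from the upper bound.} Suppose we already know $\lambda_{\max}(B)\le C$. The eigenvalues of $-\omega B$ are $\lambda_i\le C\rho\,\lambda_{\max}(B)\le C\rho$, so $J\ge 1/2$ once $s$ is large. Then $\det B=J/\widehat K\ge c$. Combined with $\lambda_{\max}(B)\le C$, this forces $\lambda_{\min}(B)\ge c\,C^{-(n-1)}$. So everything reduces to the upper bound on the largest principal radius.

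\textbf{Upper bound.} I would work with the test function
\[
\Psi=\log\lambda_{\max}(B)-\beta\log h,
\]
or $\lambda_{\max}(B)/(h-c/2)$ in Tso/Chou--Wang style, using the uniform bounds $c\le h\le C$ and $|\overline\nabla h|\le C$ from Proposition \ref{thm:C0-alpha}. Since $\lambda_{\max}$ is not smooth, I would replace it at a maximum point by a smooth lower support function, following the spectral perturbation device of \cite[Lemma~5]{BCD2017}. Concretely, add a small perturbation matrix so the top eigenvalue becomes simple and the first and second derivatives of the eigenvalue can be computed from $\partial_sB_{ij}=B_{ij}-\zeta^{-1}(W_{ij}+W\sigma_{ij})$.

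Write $\log W=(n\alpha+1)\log\Lambda-\alpha\log\det\widehat B$. Differentiating twice in the top eigendirection $e_1$ gives the term $-\alpha\,\overline\nabla_{11}\log\det\widehat B$. Its concavity part
\[
\alpha\,\widehat B^{ik}\widehat B^{jl}\overline\nabla_1\widehat B_{ij}\overline\nabla_1\widehat B_{kl}
\]
is the good term that absorbs the bad gradient terms at the maximum point, as in the closed case \cite{AGN2016}. The zero-order terms are $-W/\zeta\cdot(\ldots)$ against $\lambda_{\max}$, and they produce the decisive negative term $-c\,W\lambda_{\max}/(\zeta h)$ from the $h$-weight. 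This uses $W\ge c$, $\zeta\le C$ (Lemma \ref{lem:Lambda-bounds-alpha}) and $h\le C$.

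The perturbations come from $\Lambda$ and $\omega=\rho q$:
\begin{itemize}
\item $\overline\nabla^2\log\Lambda$ is $O(\rho)$ times $(1+|B|+|\overline\nabla B|)$, because $\Lambda$ depends on $\overline\nabla h$, so its second derivatives contain $\overline\nabla B$.
\item The shift $\omega\Id$ inside $\widehat B$ is $O(\rho)$, and $\rho\,\mathrm{tr}\widehat B\le C$ as shown in the proof of Lemma \ref{prop:F-lower-alpha}.
\end{itemize}
These terms carry a factor $\rho\to0$. I would absorb them using the first-order condition at the maximum, which converts $\overline\nabla B$ along $e_1$ into $\overline\nabla h$ terms, together with the concavity term and the smallness of $\rho$ for large $s$. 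The outcome is an inequality $0\le \partial_s\Psi-\ldots\le C-c\lambda_{\max}$ at a new maximum, which gives $\lambda_{\max}\le C$.

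\textbf{Boundary maxima.} At the equator I would argue as in the lower speed bound. $\overline\nabla_\eta h=0$ and $\overline\nabla_\eta W=0$ hold, the equator is totally geodesic, and $B$, $\widehat B$, $P$ are block diagonal there. Differentiating the Neumann condition, and using that $\overline\nabla_\eta h$ is odd under reflection, gives $\overline\nabla_\eta B_{\beta\gamma}=0$ and $\overline\nabla_\eta B_{\eta\eta}=0$, so $\overline\nabla_\eta\Psi=0$ on $\partial\Sph^n_+$. The one-sided second derivative test then applies exactly as in the proof of Lemma \ref{prop:F-lower-alpha}.

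\textbf{Main obstacle.} I expect the hardest step to be controlling the $\Lambda$ and $\omega$ perturbations, which involve third derivatives of $h$, inside the eigenvalue computation. My first attempt would be to rewrite the equation entirely in terms of $\widehat B$, absorbing the shift, so that the only genuinely new terms are $\rho$-small multiples of quantities the concavity term dominates.
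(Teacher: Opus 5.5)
Your deduction of the lower bound from the upper bound is fine. The upper-bound argument, however, has a genuine gap exactly where this problem differs from the closed case: the shift is not a small perturbation. Before this lemma you only know $\rho B\le C$, so at a maximum of $\lambda_{\max}(B)$ the relevant parameter is $\omega b_i=\rho q\,b_i=O(1)$, not $O(\rho)$. Likewise, $(\log\Lambda)_{11}$ is only bounded there, not small.

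In this regime $B\mapsto-\det(B^{-1}+\omega\Id)^\alpha$ is not concave. Set $C_{ij}=B_{ij,1}$, $r_i=1+\omega b_i\in(0,1]$ and $p_i=b_i^{-2}(b_i^{-1}+\omega)^{-1}$. The quadratic gradient terms in $(\log\widehat K)_{11}$ are then $\mathcal Q=\sum_{i,j}p_ip_j(r_i+r_j-1)C_{ij}^2$, whose coefficients are negative whenever $r_i+r_j<1$. Nothing excludes this a priori, and no smallness of $\rho$ can absorb an $O(1)$ defect in a term quadratic in third derivatives of $h$.

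The missing idea has two parts. First, keep the positive term $\mathcal S=2\sum_k\sum_{r>m}p_kC_{rk}^2/(b_1-b_r)$ from the second variation of the top eigenvalue in \cite[Lemma~5]{BCD2017}; you use that lemma only to make the eigenvalue simple. Second, combine it with $b_1<-\omega^{-1}$, which follows from $\widehat A>0$. This gives $1/(p_i(b_1-b_i))\ge1-r_i$, and hence $\mathcal Q+\mathcal S\ge\sum_{i>m}p_i^2C_{ii}^2$. That computation also needs $C_{ab}=0$ for $a,b\le m$ (from $B_{ab,k}=0$ at the maximum), which your $h$-weight destroys; the paper works with $\lambda_{\max}(B)$ itself.

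Two further steps fail as stated. First, the $h$-weight does not produce a good term $-cW\lambda_{\max}/(\zeta h)$. Since $\partial_sh=h-W/\zeta$, it contributes $-\beta\partial_s\log h=-\beta+\beta W/(\zeta h)$, and for $\lambda_{\max}/(h-c/2)$ it contributes $+\lambda_{\max}W/\bigl(\zeta(h-c/2)^2\bigr)$. Both have the bad sign; compare the term $(n\alpha+1)fG/(u_p-\sigma)$ in Lemma \ref{lem:Tso-quotient-alpha}. The actual coercive term is $-\frac{\alpha W}{\zeta}\bigl(b_1\tr P-\tr(PB)\bigr)$ in $\mathcal L B_{11}$, with $P=A\widehat BA$. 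It beats the $+b_1$ term only because $\widehat K\ge c$ and $\mu_1\le b_1^{-1}$ (with $\mu_i$ the eigenvalues of $\widehat A$) force $\tr P\ge\sum_{i\ge2}\mu_i\ge c\,b_1^{1/(n-1)}$.

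Second, at the equator $\overline\nabla_\eta B_{\eta\eta}$ does not vanish. The equation is not reflection invariant, since $\Lambda$ and $q$ involve $\langle Y,E\rangle$ and $\xi_{n+1}$, so the even extension of $h$ is not a smooth solution. Codazzi gives only $B_{\alpha\beta,\eta}=0$ for tangential indices. Differentiating $W_\eta=0$ instead gives $p_\eta B_{\eta\eta,\eta}=(\log\Lambda)_\eta\bigl(n+\tfrac1\alpha-\mu_\eta\tr\widehat B\bigr)$ with $(\log\Lambda)_\eta=-2\rho b_\eta/\Lambda\neq0$. The paper uses this identity to show that $b_\eta$ cannot be the largest eigenvalue at a boundary maximum: it would force $B_{\eta\eta,\eta}<0$, contradicting maximality along the inward geodesic. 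So the top eigenspace is tangential and the interior computation applies there too.
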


\begin{proof}
We continue to use the notation
\begin{equation*}
    \omega=\rho q,
    \qquad
    \widehat A=A+\omega\Id,
    \qquad
    \widehat B=\widehat A^{-1},
    \qquad
    P=A\widehat B A
\end{equation*}
and the operator
\begin{equation*}
        \mathcal L
        =\partial_s-\frac{\alpha W}{\zeta}
          P^{ij}\overline{\nabla}_i\overline{\nabla}_j.
\end{equation*}
The preceding estimates give
\begin{equation}\label{eq:C2-input-estimates}
\begin{gathered}
        c\le W\le C,
        \qquad c\le\zeta\le C,
        \qquad c\le\widehat K=\det\widehat A\le C,\\
        \rho B\le C\sigma,
        \qquad \widehat A\ge c\rho\Id,
        \qquad |\omega|\le C\rho.
\end{gathered}
\end{equation}
Here $\rho B\le C\sigma$ is the preliminary Euclidean curvature lower bound in Lemma~\ref{lem:large-s-gauss-alpha}. All constants below are independent of the terminal time used in the maximum principle argument.

Since $Y=\overline{\nabla} h+h\xi$ and $Y_{,i}=B_i{}^j e_j$, differentiation
of $\Lambda=1+2\rho\langle Y,E\rangle+\rho^2|Y|^2$ gives
\begin{equation}\label{eq:C2-Lambda-first}
        \Lambda_i=2\rho B_i{}^j(\langle e_j,E \rangle + \rho h_j).
\end{equation}
Moreover,
\begin{equation*}
        \omega=-\frac{2\rho(\langle\xi,E\rangle+\rho h)}{\Lambda}.
\end{equation*}
Using \eqref{eq:C2-Lambda-first} and
$A_i{}^k(\log\Lambda)_k=2\rho (\langle e_i,E\rangle+\rho h_i)/\Lambda$, we obtain the exact identity
\begin{equation}\label{eq:C2-omega-gradient-identity}
        \omega_i=-\widehat A_i{}^k(\log\Lambda)_k.
\end{equation}

To prove the upper bound for $B$, fix $T>s_3$ and let
\begin{equation*}
        \lambda(\xi,s)=\lambda_{\max}(B(\xi,s)).
\end{equation*}
Suppose that the maximum of $\lambda$ on
$\mathbb{S}^n_+\times[s_3,T]$ is attained at $(\xi_0,s_0)$.  If
$s_0=s_3$, its value is bounded by the data at time $s_3$, so assume
$s_0>s_3$. We divide the maximum-principle argument into five steps.

\medskip
\noindent\textbf{Step 1. The boundary maximum.}

Recall that $\eta$ denotes the outward unit conormal of the
equator. Along $\partial\mathbb S^n_+$, we have $\eta=-E$.
Greek indices denote directions tangent to the equator. Since
$h_\eta=0$ and the equator is totally geodesic in $\mathbb S^n$,
the Codazzi identity for $B$ gives
\begin{equation}\label{eq:C2-boundary-B-identities}
        B_{\alpha\eta}=0,
        \qquad
        B_{\alpha\beta,\eta}
        =B_{\alpha\eta,\beta}=0
        \quad\text{on }\partial\mathbb S^n_+.
\end{equation}
Differentiating $h_\eta=0$ in time and using $\partial_sh=h-{W}/{\zeta(s)}$ 
yields
\begin{equation}\label{eq:C2-boundary-Fnu}
        W_\eta=0
        \quad\text{on }\partial\mathbb S^n_+.
\end{equation}

Suppose that the space-time maximum of
$\lambda_{\max}(B)$ is attained at a boundary point
$(\xi_0,s_0)$. At $\xi_0$, diagonalize the tangential block of
$B$. By \eqref{eq:C2-boundary-B-identities}, the splitting
\begin{equation*}
        T_{\xi_0}\mathbb S^n
        =
        T_{\xi_0}\partial\mathbb S^n_+
        \oplus\operatorname{span}\{\eta\}
\end{equation*}
is invariant under $B$. Denote
\begin{equation*}
        b_\eta=B_{\eta\eta},
\end{equation*}
and denote by $\mu_\eta$ and $p_\eta$ the corresponding
eigenvalues of $\widehat A$ and $P$, respectively. From
\begin{equation*}
        W=\Lambda^{n\alpha+1}\widehat K^\alpha
\end{equation*}
and
\begin{equation*}
        (\log\widehat K)_i
        =
        -P^{pq}B_{pq,i}
        +\omega_i\tr\widehat B,
\end{equation*}
equations \eqref{eq:C2-boundary-B-identities} and
\eqref{eq:C2-boundary-Fnu} give
\begin{equation*}
        \alpha p_\eta B_{\eta\eta,\eta}
        =
        (n\alpha+1)(\log\Lambda)_\eta
        +\alpha\omega_\eta\tr\widehat B.
\end{equation*}
On the equator, \eqref{eq:C2-Lambda-first} and
\eqref{eq:C2-omega-gradient-identity} imply
\begin{equation*}
        (\log\Lambda)_\eta
        =
        -\frac{2\rho b_\eta}{\Lambda}<0,
        \qquad
        \omega_\eta
        =
        -\mu_\eta(\log\Lambda)_\eta.
\end{equation*}
Consequently,
\begin{equation}\label{eq:C2-boundary-normal-eigenvalue}
        p_\eta B_{\eta\eta,\eta}
        =
        (\log\Lambda)_\eta
        \left(
            n+\frac1\alpha
            -\mu_\eta\tr\widehat B
        \right).
\end{equation}

We claim that the normal eigenvalue $b_\eta$ cannot be a largest
eigenvalue of $B$ at $(\xi_0,s_0)$. Indeed, if $b_\eta$ were a
largest eigenvalue of $B$, then $\mu_\eta$ would be a smallest
eigenvalue of $\widehat A$. Hence
\begin{equation*}
        \mu_\eta\tr\widehat B
        =
        \sum_{i=1}^n\frac{\mu_\eta}{\mu_i}
        \leq n.
\end{equation*}
It follows from \eqref{eq:C2-boundary-normal-eigenvalue} that
\begin{equation*}
        B_{\eta\eta,\eta}<0.
\end{equation*}

On the other hand, extend $\eta$ by parallel transport along the
inward unit normal geodesic
\begin{equation*}
        \gamma(r)
        =
        \exp_{\xi_0}(-r\eta),
        \qquad r\geq0,
\end{equation*}
and set
\begin{equation*}
        \phi(r)
        =
        B_{\gamma(r),s_0}
        \big(\eta(r),\eta(r)\big).
\end{equation*}
If $b_\eta$ were a largest eigenvalue, then
\begin{equation*}
        \phi(r)
        \leq
        \lambda_{\max}
        \big(B(\gamma(r),s_0)\big)
        \leq
        \lambda_{\max}
        \big(B(\xi_0,s_0)\big)
        =
        \phi(0)
\end{equation*}
for all sufficiently small $r\geq0$. Therefore $\phi'(0)\leq0$.
Since $\gamma'(0)=-\eta$, this gives
\begin{equation*}
        B_{\eta\eta,\eta}(\xi_0,s_0)\geq0,
\end{equation*}
which is a contradiction. Thus the maximal eigenspace of $B$ at
a boundary maximum is contained in
$T_{\xi_0}\partial\mathbb S^n_+$.

\medskip
\noindent\textbf{Step 2. The spectral maximum principle.}

We now carry out the spectral maximum principle at
$(\xi_0,s_0)$. Choose an orthonormal frame in which
\begin{equation*}
        B_{ij}=b_i\delta_{ij},
        \qquad
        b_1=\cdots=b_m>b_{m+1}\geq\cdots\geq b_n,
        \qquad
        b_1=\lambda(\xi_0,s_0).
\end{equation*}
The indices $a,b\leq m$ refer to the maximal eigenspace, and the
indices $i,j>m$ refer to its orthogonal complement. If
$\xi_0\in\partial\mathbb S^n_+$, we also choose $ e_n=\eta$. 
By \textbf{Step 1}, the vectors $e_1,\ldots,e_m$ are tangent to the
equator.

Set
\begin{equation*}
        \kappa_i=b_i^{-1},
        \qquad
        \mu_i=\kappa_i+\omega,
        \qquad
        r_i=\frac{\mu_i}{\kappa_i}=1+\omega b_i,
        \qquad
        p_i=\frac{\kappa_i^2}{\mu_i}.
\end{equation*}
Thus $\mu_i$ and $p_i$ are the eigenvalues of $\widehat A$ and
$P$, respectively. Since $\omega\leq0$ and $\widehat A>0$, we
have
\begin{equation*}
        0<r_i\leq1,
        \qquad
        \mu_1\leq\mu_i
        \quad\text{for every }i.
\end{equation*}

We next justify the first and second derivative inequalities at
both interior and boundary maximum points. Let
\begin{equation*}
        \mathcal E
        =
        \operatorname{span}\{e_1,\ldots,e_m\}
\end{equation*}
be the maximal eigenspace. For an interior maximum, the usual
eigenvalue perturbation argument may be applied in every spatial
direction. At a boundary maximum, it may be applied in directions
tangent to the equator, since the equator is totally geodesic. In
the conormal direction, we use the inward geodesic
$\gamma(r)=\exp_{\xi_0}(-r\eta)$ for $r\geq0$.

For every tangential direction, the function
$\lambda_{\max}(B)$ has a two-sided maximum at $\xi_0$. Hence the
first variation of the restriction of $B$ to $\mathcal E$
vanishes. In the conormal direction, the same conclusion follows
directly from
\eqref{eq:C2-boundary-B-identities}, since
$\mathcal E\subset T_{\xi_0}\partial\mathbb S^n_+$. We therefore
have
\begin{equation}\label{eq:C2-top-block-first-derivative}
        B_{ab,k}=0
        \qquad
        (a,b\leq m,\ 1\leq k\leq n).
\end{equation}

For completeness, we also give the corresponding second-order
argument. Fix a spatial direction $e_k$. At a boundary point,
when $e_k=\eta$, the following calculation is understood along
the inward geodesic with initial velocity $-\eta$. Parallel
transport the frame along the corresponding geodesic. For any
unit vector
\begin{equation*}
        v=\sum_{a=1}^m v^ae_a\in\mathcal E,
\end{equation*}
consider, after normalization, the vector field
\begin{equation*}
        v(r)
        =
        \sum_{a=1}^m v^ae_a(r)
        +
        r\sum_{q>m}
        \frac{
            \sum_{a=1}^m v^aB_{aq,k}
        }{b_1-b_q}
        e_q(r).
\end{equation*}
Since $b_1$ is the space-time maximum of the largest eigenvalue,
\begin{equation*}
        B_{\gamma(r),s_0}\big(v(r),v(r)\big)
        \leq b_1
\end{equation*}
for sufficiently small $r$. In the conormal direction this holds
for $r\geq0$, and the first derivative at $r=0$ vanishes by
\eqref{eq:C2-boundary-B-identities}. Differentiating twice at
$r=0$ gives
\begin{equation*}
        \sum_{a,b\leq m}v^av^b
        \left(
            B_{ab,kk}
            +
            2\sum_{q>m}
            \frac{B_{aq,k}B_{bq,k}}{b_1-b_q}
        \right)
        \leq0.
\end{equation*}
This is the same eigenvalue perturbation calculation as in
\cite[Lemma~5]{BCD2017}. Since $v\in\mathcal E$ is arbitrary, the
quadratic form in parentheses is nonpositive on $\mathcal E$.
Taking $v=e_1$, we obtain
\begin{equation}\label{eq:C2-spectral-second-variation}
        B_{11,kk}
        +
        2\sum_{q>m}
        \frac{B_{1q,k}^2}{b_1-b_q}
        \leq0
        \qquad
        (1\leq k\leq n).
\end{equation}
At a boundary point, the inequality for $k=n$ is understood in
the one-sided inward sense.

Extend $e_1$ locally as a unit vector field and choose the time
extension so that
\begin{equation*}
        \partial_s e_1=0
        \quad\text{at }(\xi_0,s_0).
\end{equation*}
Then
\begin{equation*}
        B(e_1,e_1)
        \leq
        \lambda_{\max}(B)
        \leq
        \lambda(\xi_0,s_0)
\end{equation*}
on the preceding space-time cylinder, with equality at
$(\xi_0,s_0)$. Since $s_0>s_3$, the one-sided time derivative
satisfies
\begin{equation*}
        \partial_sB_{11}(\xi_0,s_0)\geq0.
\end{equation*}
Since $p_k>0$, equation
\eqref{eq:C2-spectral-second-variation} gives
\begin{equation}\label{eq:C2-spectral-max-inequality}
\begin{split}
        0\leq{}&
        \partial_sB_{11}
        -
        \frac{\alpha W}{\zeta}
        \sum_{k=1}^n p_k
        \left(
            B_{11,kk}
            +
            2\sum_{q>m}
            \frac{B_{1q,k}^2}{b_1-b_q}
        \right).
\end{split}
\end{equation}

\medskip
\noindent\textbf{Step 3. The evolution inequality.}

We next derive the differential inequality satisfied by the largest eigenvalue. Set
\begin{equation*}
        \Psi=\log W=(n\alpha+1)\log\Lambda+\alpha\log\widehat K,
        \qquad
        C_{ij}=B_{ij,1}.
\end{equation*}
From \eqref{eq:C2-top-block-first-derivative}, and also directly from Codazzi, we have
\begin{equation}\label{eq:C2-C-vanishing}
\begin{aligned}
       &  C_{ab}=0\quad(a,b\le m),\\
       & C_{1k}=B_{1k,1}=B_{11,k}=0\quad(1\le k\le n).   
\end{aligned}
\end{equation}
The evolution equation for $B$ is
\begin{equation*}
        \partial_sB_{ij}=B_{ij}-\frac1\zeta(W_{ij}+W\sigma_{ij}),
\end{equation*}
so
\begin{equation*}
        \partial_sB_{11}=b_1-\frac1\zeta(W_{11}+W),
        \qquad
        W_{11}=W(\Psi_{11}+\Psi_1^2).
\end{equation*}
Since $A_{,1}=-ACA$, differentiation of
$\widehat A=A+\omega\Id$ gives
\begin{equation*}
        \widehat A_{,1}=-ACA+\omega_1\Id,
\end{equation*}
\begin{equation*}
        \widehat A_{,11}
        =-AB_{,11}A+2A C A C A+\omega_{11}\Id.
\end{equation*}
Consequently,
\begin{equation*}
        (\log\widehat K)_{11}
        =-P^{pq}B_{pq,11}+\mathcal Q+\mathcal R,
\end{equation*}
where
\begin{equation}\label{eq:C2-QR-definition}
\begin{split}
        \mathcal Q
        &=2\tr(\widehat B A C A C A)
          -\tr(\widehat B A C A\widehat B A C A),\\
        \mathcal R
        &=\omega_{11}\tr\widehat B
          +2\omega_1\tr(\widehat B A C A\widehat B)
          -\omega_1^2\tr(\widehat B^2).
\end{split}
\end{equation}
Using the spherical commutation identity
\begin{equation*}
        B_{ii,11}=B_{11,ii}+B_{ii}-B_{11},
\end{equation*}
we obtain at $(\xi_0,s_0)$
\begin{equation}\label{eq:C2-LB11}
\begin{split}
        \mathcal L B_{11}
        &=b_1-\frac W\zeta
          -\frac{\alpha W}{\zeta}
             \bigl(b_1\tr P-\tr(PB)\bigr)\\
        &\quad
          -\frac W\zeta\left[
              (n\alpha+1)(\log \Lambda)_{11}
              +\Psi_1^2+\alpha\mathcal Q+\alpha\mathcal R
          \right].
\end{split}
\end{equation}
By Codazzi, $B_{1r,k}=B_{rk,1}=C_{rk}$.  Define
\begin{equation*}
        \mathcal{S}
        :=2\sum_{k=1}^n\sum_{r>m}
             \frac{p_k C_{rk}^2}{b_1-b_r}.
\end{equation*}
The term $\mathcal S$ is the spectral correction arising from the possible multiplicity of the largest eigenvalue. Combining \eqref{eq:C2-spectral-max-inequality} and
\eqref{eq:C2-LB11} gives
\begin{align}
        0\le &b_1-\frac W\zeta
          -\frac{\alpha W}{\zeta}
             \bigl(b_1\tr P-\tr(PB)\bigr)  \nonumber\\
        & -\frac W\zeta\bigg[
              (n\alpha+1)(\log\Lambda)_{11}
              +\Psi_1^2
              +\alpha(\mathcal Q+\mathcal R+\mathcal S)
          \bigg].\label{eq:C2-max-full-inequality}
\end{align}

\medskip
\noindent\textbf{Step 4. The quadratic and shift terms.}

We now estimate the last bracket in the differential inequality
\eqref{eq:C2-max-full-inequality}. Since $\Psi_1^2\geq0$, it remains to control the terms $\mathcal Q$, $\mathcal R$, and the spectral correction $\mathcal S$. We first combine
$\mathcal Q$ with $\mathcal S$ and show that their sum controls the potentially negative quadratic terms involving the first derivatives of $B$. We then use the special structure of the shift $\omega=\rho q$ to estimate $\mathcal R$.

Since $h>0$ and $\rho>0$, formula \eqref{s5.Dq} gives
$q<0$ and hence $\omega<0$ at every finite normalized time. Positivity of $\widehat A$ gives
$b_1<-\omega^{-1}$.  Therefore, for every $i>m$,
\begin{equation*}
        b_1-b_i\le -\omega^{-1}-b_i=-\frac{r_i}{\omega},
\end{equation*}
and hence
\begin{equation}\label{eq:C2-key-coefficient}
        \frac1{p_i(b_1-b_i)}
        =\frac{b_i r_i}{b_1-b_i}
        \ge -\omega b_i=1-r_i.
\end{equation}
Denote $r_*=r_1=\cdots=r_m>0$.  Using
\eqref{eq:C2-C-vanishing}, a direct component expansion gives
\begin{equation*}
\begin{split}
        \mathcal Q 
        &= 2\tr(\widehat{A}BPCPC) - \tr(PCPC) \\
        &=\sum\limits_{i,j=1}^n \left(2p_ip_jr_iC_{ij}^2-p_ip_j C_{ij}^2 \right)\\
        &=\sum_{i>m}p_i^2(2r_i-1)C_{ii}^2
          +2\sum_{m<i<j\le n}
             p_i p_j(r_i+r_j-1)C_{ij}^2\\
        &\quad
          +2\sum_{\substack{a\le m\\ i>m}}
             p_a p_i(r_*+r_i-1)C_{ai}^2,
\end{split}
\end{equation*}
and
\begin{equation*}
\begin{split}
        \mathcal S
        &=\sum_{i>m}\frac{2p_i}{b_1-b_i}C_{ii}^2\\
        &\quad
          +2\sum_{m<i<j\le n}
            \left(\frac{p_j}{b_1-b_i}
                  +\frac{p_i}{b_1-b_j}\right)C_{ij}^2\\
        &\quad
          +2\sum_{\substack{a\le m\\ i>m}}
             \frac{p_a}{b_1-b_i}C_{ai}^2.
\end{split}
\end{equation*}
For the diagonal lower-eigenspace terms,
\eqref{eq:C2-key-coefficient} gives
\begin{equation*}
        p_i^2(2r_i-1)+\frac{2p_i}{b_1-b_i}
        \ge p_i^2.
\end{equation*}
For $m<i<j\le n$, it gives
\begin{equation*}
\begin{split}
        &2p_i p_j(r_i+r_j-1)
        +2\left(\frac{p_j}{b_1-b_i}
                 +\frac{p_i}{b_1-b_j}\right)
        \ge2p_i p_j,
\end{split}
\end{equation*}
and for $a\le m$ and $i>m$,
\begin{equation*}
        2p_a p_i(r_*+r_i-1)
        +\frac{2p_a}{b_1-b_i}
        \ge2r_*p_a p_i\ge0.
\end{equation*}
Consequently,
\begin{equation}\label{eq:C2-QE-coercive}
\begin{split}
        \mathcal Q+\mathcal S
        &\ge \sum_{i>m}p_i^2C_{ii}^2
          +2\sum_{m<i<j\le n}p_i p_jC_{ij}^2\\
        &\quad
          +2r_*\sum_{\substack{a\le m\\ i>m}}p_a p_iC_{ai}^2\\
        &\ge \sum_{i>m}p_i^2C_{ii}^2.
\end{split}
\end{equation}

We now control the shift terms in $\mathcal R$.  At the maximum point,
\eqref{eq:C2-Lambda-first} and \eqref{eq:C2-C-vanishing} yield
\begin{equation*}
\begin{split}
        \Lambda_1&=2\rho b_1(\langle e_1,E\rangle+\rho h_1),\\
        \Lambda_{11}
        &=2\rho b_1\bigl(-\langle\xi,E\rangle
                          +\rho(b_1-h)\bigr).
\end{split}
\end{equation*}
The preliminary estimate $\rho b_1\le C$, together with the $C^0$ and
$C^1$ estimates and $\Lambda=1+O(\delta(s))$, gives
\begin{equation}\label{eq:C2-ell-bounds}
        |(\log\Lambda)_1|+|(\log\Lambda)_{11}|\le C.
\end{equation}
By \eqref{eq:C2-omega-gradient-identity}, $ \omega_1=-\mu_1(\log\Lambda)_1$.  
Differentiating \eqref{eq:C2-omega-gradient-identity} in the $e_1$
direction and using
\begin{equation*}
        (\widehat A_1{}^k)_{,1}
        =(-ACA+\omega_1\Id)_1{}^k
        =\omega_1\delta_1^k
\end{equation*}
because $C_{1k}=0$, we obtain
\begin{equation}\label{eq:C2-omega11}
        \omega_{11}
        =-\omega_1(\log\Lambda)_1-\mu_1(\log\Lambda)_{11}
        =\mu_1\left((\log\Lambda)_1^2-(\log\Lambda)_{11}\right).
\end{equation}
At the diagonal point, \eqref{eq:C2-QR-definition} becomes
\begin{equation*}
        \mathcal R
        =\omega_{11}\sum_i\frac1{\mu_i}
          +2\omega_1\sum_i\frac{\kappa_i^2}{\mu_i^2}C_{ii}
          -\omega_1^2\sum_i\frac1{\mu_i^2}.
\end{equation*}
Since $\mu_1\le\mu_i$, equations
\eqref{eq:C2-ell-bounds}--\eqref{eq:C2-omega11} imply
\begin{equation}\label{eq:C2-R-zero-order-bound}
        \left|\omega_{11}\sum_i\frac1{\mu_i}\right|
        +\omega_1^2\sum_i\frac1{\mu_i^2}
        \le C.
\end{equation}
Furthermore, $\kappa_i^2/\mu_i^2=p_i/\mu_i$, and
$C_{aa}=0$ for $a\le m$.  Hence
\begin{equation}\label{eq:C2-R-linear-bound}
\begin{split}
        \left|2\omega_1\sum_i
                    \frac{\kappa_i^2}{\mu_i^2}C_{ii}\right|
        &\le C\sum_{i>m}|p_iC_{ii}|,
\end{split}
\end{equation}
where we used
$|\omega_1|/\mu_i=(\mu_1/\mu_i)|(\log\Lambda)_1|\le C$.
Young's inequality, \eqref{eq:C2-QE-coercive}, and
\eqref{eq:C2-R-zero-order-bound}--\eqref{eq:C2-R-linear-bound} give
\begin{equation}\label{eq:C2-QER-lower}
        \mathcal Q+\mathcal S+\mathcal R
        \ge \frac12\sum_{i>m}p_i^2C_{ii}^2-C
        \ge-C.
\end{equation}
Since $\Psi_1^2\ge0$ and $(\log\Lambda)_{11}$ is bounded, the estimate \eqref{eq:C2-QER-lower} shows that the last bracket in
\eqref{eq:C2-max-full-inequality} is bounded from below by $-C$.

\medskip
\noindent\textbf{Step 5. Completion of the estimate.}

We now return to the main differential inequality \eqref{eq:C2-max-full-inequality}. The estimates in
Step 4 control all terms involving the first derivatives of $B$ and
the shift $\omega$. It remains to estimate the lower-order term
$\tr(PB)$ and to obtain a coercive lower bound for $\tr P$ in terms
of the largest eigenvalue $b_1$.

First, we have
\begin{equation}\label{eq:C2-trPB}
\begin{aligned}
        \tr(PB)
        =&\sum_i\frac{\kappa_i}{\mu_i}\\
          =&\sum_i\left(1-\frac{\omega}{\mu_i}\right)
          =n-\omega\tr\widehat B
          \le C,
\end{aligned}
\end{equation}
where we used 
$\widehat A\ge c\rho\Id$ and $|\omega|\le C\rho$.
Using \eqref{eq:C2-input-estimates}, \eqref{eq:C2-QER-lower}, and
\eqref{eq:C2-trPB} in \eqref{eq:C2-max-full-inequality}, we obtain
\begin{equation}\label{eq:C2-max-reduced}
        0\le b_1-cb_1\tr P+C.
\end{equation}

We next derive a lower bound for $\tr P$.  Since
\begin{equation*}
        \mu_1=\kappa_1+\omega\le\kappa_1=b_1^{-1}
\end{equation*}
and $\prod_i\mu_i=\widehat K\ge c$, we obtain
\begin{equation*}
        \prod_{i=2}^n\mu_i\ge\frac c{\mu_1}\ge cb_1.
\end{equation*}
Moreover, $\kappa_i\ge\mu_i>0$, so
$p_i=\kappa_i^2/\mu_i\ge\mu_i$.  Since $n\ge2$,
\begin{equation}\label{eq:C2-trP-lower}
\begin{split}
        \tr P
        &\ge\sum_{i=2}^n\mu_i
         \ge(n-1)\left(\prod_{i=2}^n\mu_i\right)^{1/(n-1)}\\
        &\ge c b_1^{1/(n-1)}.
\end{split}
\end{equation}
Substituting \eqref{eq:C2-trP-lower} into \eqref{eq:C2-max-reduced}, we get
\begin{equation*}
        0\le b_1-cb_1^{n/(n-1)}+C,
\end{equation*}
which implies $b_1\le C$.  Since $T>s_3$ was arbitrary,
\begin{equation}\label{eq:C2-B-upper}
        B\le C\sigma
        \qquad\text{on }\mathbb{S}^n_+\times[s_3,\infty).
\end{equation}

Finally, \eqref{eq:C2-B-upper} is equivalent to $A\ge c\Id$.  Since
$\omega\to0$ uniformly, after increasing $s_3$ we have
\begin{equation*}
        \widehat A=A+\omega\Id\ge\frac c2\Id.
\end{equation*}
Together with $\det\widehat A\le C$, this implies
$\widehat A\le C\Id$.  Since $\omega\le0$,
\begin{equation*}
        A=\widehat A-\omega\Id
          =\widehat A+|\omega|\Id\le C\Id.
\end{equation*}
Therefore $c\Id\le A\le C\Id$, which is equivalent to
\eqref{eq:B-two-sided}.
\end{proof}

Uniform convexity yields higher-order estimates for \eqref{eq:support-flow-alpha}.
\begin{corollary}[Higher regularity]\label{cor:higher-regularity-alpha}
For every integer $k\ge0$ there is $C_k<\infty$ such that
\begin{equation}\label{eq:Ck-alpha}
        \|h(\cdot,s)\|_{C^k(\Sph^n_+)}\le C_k
\end{equation}
for all sufficiently large $s$.
Moreover,
\begin{equation}\label{eq:coefficient-Ck-limit}
        \|\Theta-1\|_{C^k}
        +\left\|\dfrac{\Theta }{\dfrac{1}{|\mathbb{S}_+^n|}\displaystyle\int_{\mathbb{S}_+^n}\Theta K^{\alpha-1}\mathrm{d}\sigma}-\dfrac{1 }{\dfrac{1}{|\mathbb{S}_+^n|}\displaystyle\int_{\mathbb{S}_+^n} K^{\alpha-1}\mathrm{d}\sigma}
        \right\|_{C^k}
        \le C_k\rho(s).
\end{equation}
\end{corollary}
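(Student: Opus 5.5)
The plan is to view \eqref{eq:support-flow-alpha}, in the form \eqref{eq:F-shifted}, as a fully nonlinear scalar parabolic equation for $h$ on $\Sph^n_+$ with homogeneous Neumann condition. We then bootstrap from the $C^2$ bounds of Lemma~\ref{thm:C2-alpha} using Krylov--Safonov and Schauder theory. Even reflection across the equator is the cleanest way to handle the boundary. Since $\overline{\nabla}_\eta h=0$ on $\bd\Sph^n_+$ and the equator is totally geodesic, the even extension $\tilde h$ of $h$ to $\Sph^n$ is $C^{1,1}$. We will show it solves the reflected equation. The coefficients $\Lambda$ and $q$ in \eqref{s5.Dq} involve $\langle\xi,E\rangle$ and $\langle\overline\nabla h,E\rangle$, which are odd under reflection. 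The reflected problem is therefore not literally the same equation. Instead, it is the image of the flow of the reflected half-body under the reflected conformal factor, and it depends smoothly on $(\xi,h,\overline\nabla h,\overline\nabla^2h)$ away from the equator. To avoid this issue, I would instead work directly with the Neumann problem. Its local regularity theory on the hemisphere is standard, and the boundary condition is linear and oblique.

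\textbf{Step 1.} We rewrite the equation in concave form. Set $G(B)=\log\det(\Id+\omega B)-\log\det B$, so that $W=\Lambda^{n\alpha+1}\exp(\alpha\,\cdot)$ of it. A cleaner choice is to write
\begin{equation*}
\partial_s h=h-\zeta^{-1}\Lambda^{n\alpha+1}\det\widehat A^{\,\alpha},\qquad \widehat A=B^{-1}+\omega\Id .
\end{equation*}
By Lemma~\ref{thm:C2-alpha} together with $|\omega|\le C\rho$, the matrix $B$ ranges in a compact subset of positive definite matrices, so the equation is uniformly parabolic with ellipticity constants controlled. The map $B\mapsto -\log\det(B^{-1}+\omega\Id)$ is concave for $\omega\le0$ small, up to an $O(\rho)$ perturbation; this holds because $-\log\det(B^{-1}+\omega)$ equals $\log\det B-\log\det(\Id+\omega B)$, and the second term has Hessian $O(\rho)$. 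We therefore apply $\log$ to $\zeta(h-\partial_sh)$, which is bounded below by Lemma~\ref{prop:F-lower-alpha}. This gives an equation of the form $\log(h-\partial_sh)=F(\overline\nabla^2h,\overline\nabla h,h,\xi,s)$ with $F$ concave in the Hessian up to small smooth perturbations. The $s$-dependence enters through $\rho$ and $\zeta(s)$. For $\zeta(s)$ we only have bounds, not regularity in $s$, but it is spatially constant, so we treat it as a bounded measurable time coefficient. Alternatively, we note that $\zeta'(s)$ is controlled once $C^{2,\beta}$ bounds hold.

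\textbf{Step 2.} We obtain $C^{2,\beta}$ estimates via Krylov's theorem for concave fully nonlinear parabolic equations with oblique boundary conditions (Lieberman's version), which we apply on unit time strips $[S-1,S+1]$. The main obstacle is precisely the lack of exact concavity together with the time coefficient $\zeta(s)$. I would first absorb $\zeta$ by reparametrizing time locally, or note that $\zeta$ is Lipschitz in $s$ given $C^2$ bounds and the equation. The $O(\rho)$ nonconcavity is then handled by the perturbative Evans--Krylov argument (Caffarelli's approximation), which is available since the operator is $C^1$-close to a concave one. Schauder bootstrapping then gives \eqref{eq:Ck-alpha} for all $k$ after differentiating the equation: tangential derivatives preserve the Neumann condition, and normal derivatives are recovered from the equation.

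\textbf{Step 3.} For \eqref{eq:coefficient-Ck-limit}, we use the identity $\Theta=\Lambda^{n\alpha+1}\det(\Id+\rho qB)^\alpha$, where $\Lambda-1$ and $\rho qB$ are $\rho$ times smooth expressions in $\xi,h,\overline\nabla h,\overline\nabla^2h$. By \eqref{eq:Ck-alpha}, all $C^k$ norms of these expressions are bounded, so $\|\Theta-1\|_{C^k}\le C_k\rho$. Next, $\zeta=|\Sph^n_+|^{-1}\int\Theta K^{\alpha-1}$ differs from $|\Sph^n_+|^{-1}\int K^{\alpha-1}$ by $O(\rho)$, and both are bounded below by Lemma~\ref{lem:Lambda-bounds-alpha} and uniform convexity. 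The difference of the quotients is therefore $O(\rho)$ in $C^k$.
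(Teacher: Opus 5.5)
Your outline follows the paper's proof. Both use uniform parabolicity from the two-sided bounds on $W$, $\zeta$ and $B$, and concavity of the operator in $B$. Both apply a Krylov-type $C^{2,\beta}$ estimate for the linear oblique (Neumann) problem on unit time strips; the paper cites Chatzigeorgiou--Milakis together with the interior Evans--Krylov estimate. Both then bootstrap with Schauder estimates. Your Step 3 is exactly the paper's estimate of $\Theta-1$, $J-1$ and $\zeta-\zeta_0$. You are also right to reject even reflection, since $\Lambda$ and $q$ are not reflection invariant.

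\textbf{Concavity step.} This is the one step that does not hold as you justify it. A ``perturbative Evans--Krylov argument'' for an operator that is merely $C^1$-close to a concave one is not a standard result you can cite. Caffarelli's approximation perturbs around frozen operators that already have $C^{2,\bar\beta}$ estimates. Such a result is certainly not available off the shelf for oblique boundary data. You do not need it, because your own splitting already gives exact concavity. On the compact range $c\sigma\le B\le C\sigma$ from Lemma~\ref{thm:C2-alpha},
\[
D_B^2\log\det B\,[H,H]=-\tr\big(B^{-1}HB^{-1}H\big)\le -c|H|^2 ,
\]
while
\[
D_B^2\log\det(\Id+\omega B)\,[H,H]=-\omega^2\tr\big((\Id+\omega B)^{-1}H(\Id+\omega B)^{-1}H\big)=O(\rho^2)|H|^2 .
\]
Hence for large $s$ the operator is genuinely, uniformly concave, and standard theory applies. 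This is the paper's argument, stated there as uniform strict concavity of $-\det(B)^{-\alpha}$ plus a $C^2$-small perturbation in $\omega$.

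Watch the sign as well. $\log(h-\partial_s h)=-\alpha\log\det B+\cdots$ is \emph{convex} in $B$. The concave form is $-\log(h-\partial_s h)$, or simply $\partial_s h=h-\zeta^{-1}\Lambda^{n\alpha+1}\det\widehat A^{\alpha}$.

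\textbf{Time dependence through $\zeta$.} Your claim that $\zeta'$ is controlled by $C^2$ or $C^{2,\beta}$ bounds is unjustified. The derivative $\partial_s\zeta$ involves $\overline\nabla^2 W$, i.e.\ fourth derivatives of $h$. The bootstrap only needs H\"older continuity of $\zeta$ in $s$. As the paper notes, this follows from the $C^{2+\beta,1+\beta/2}$ bound and the defining integral.

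For the first $C^{2,\beta}$ step, your reparametrization idea works, and it addresses a point the paper passes over silently. You must also undo the rescaling. Set $d\tau=ds/\zeta$, $v=e^{S-s}h$ and $B_v=\overline\nabla^2v+v\sigma$. Then
\[
\partial_\tau v=-e^{(n\alpha+1)(S-s)}\Lambda^{n\alpha+1}\det\big(B_v^{-1}+\rho(S)q\big)^{\alpha}.
\]
Here $\Lambda$ and $q$ are smooth functions of $(\xi,\rho(S)v,\rho(S)\overline\nabla v)$. The only time dependence is through $e^{S-s(\tau)}$, which is Lipschitz in $\tau$.
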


\begin{proof}
We first verify the structural conditions needed for the
parabolic regularity estimates. For fixed
$(\xi,s,h,\overline{\nabla}h)$, write the second-order part of
\eqref{eq:support-flow-alpha} as
\begin{equation*}
    \mathcal Q(B)
    =
    -\frac{\Lambda^{n\alpha+1}}{\zeta(s)}
    \det(B^{-1}+\omega\Id)^\alpha,
    \qquad
    \omega=\rho q.
\end{equation*}
The calculation in the proof of Lemma
\ref{prop:F-lower-alpha} gives
\begin{equation*}
    \frac{\partial\mathcal Q}{\partial B_{ij}}
    =
    \frac{\alpha W}{\zeta}P^{ij},
    \qquad
    P=A\widehat B A.
\end{equation*}
By Lemmas \ref{prop:F-upper-alpha},
\ref{lem:Lambda-bounds-alpha}, \ref{prop:F-lower-alpha}, and
\ref{thm:C2-alpha}, after increasing the initial normalized time
if necessary, there are constants $0<\lambda\leq\Lambda_0<\infty$
such that
\begin{equation}\label{eq:uniform-parabolicity-higher}
    \lambda\sigma^{ij}
    \leq
    \frac{\alpha W}{\zeta}P^{ij}
    \leq
    \Lambda_0\sigma^{ij}.
\end{equation}
Thus \eqref{eq:support-flow-alpha} is uniformly parabolic, with
constants independent of the normalized time.

We next justify the concavity of the operator. When $\omega=0$,
the second-order part is
\begin{equation*}
    \mathcal Q_0(B)
    =
    -\frac{\Lambda^{n\alpha+1}}{\zeta(s)}
    \det(B)^{-\alpha}.
\end{equation*}
For every symmetric tensor $H$, direct differentiation gives
\begin{align*}
    D_B^2\mathcal Q_0(B)[H,H]
    ={}&
    -\frac{\alpha\Lambda^{n\alpha+1}
    \det(B)^{-\alpha}}{\zeta(s)}
    \bigg(
        \tr(B^{-1}HB^{-1}H)  
        +\alpha\big(\tr(B^{-1}H)\big)^2
    \bigg).
\end{align*}
The uniform bounds
\begin{equation*}
    c\sigma\leq B\leq C\sigma,
    \qquad
    c\leq\zeta\leq C,
    \qquad
    c\leq\Lambda\leq C
\end{equation*}
therefore imply that $\mathcal Q_0$ is uniformly strictly
concave on the compact set of matrices under consideration.
Moreover,
\begin{equation*}
    |\omega|=|\rho q|\leq C\rho(s)\longrightarrow0.
\end{equation*}
The map
\begin{equation*}
    (B,\omega)
    \longmapsto
    -\det(B^{-1}+\omega\Id)^\alpha
\end{equation*}
is smooth whenever $B>0$ and $B^{-1}+\omega\Id>0$.
Consequently, for all sufficiently large $s$, the operator
$\mathcal Q$ is a uniformly small $C^2$ perturbation of
$\mathcal Q_0$ on this compact matrix set. After increasing the
initial time once more, $\mathcal Q$ is therefore uniformly
concave.

The boundary condition
\begin{equation*}
    \overline{\nabla}_\eta h=0
    \qquad\text{on }\partial\mathbb S^n_+
\end{equation*}
is linear and uniformly oblique. Fix a sufficiently large
$S$ and consider the equation on the parabolic cylinder
\begin{equation*}
    \mathbb S^n_+\times[S-2,S].
\end{equation*}
The estimates above are uniform in $S$. By the boundary regularity theory for fully nonlinear uniformly parabolic equations with oblique boundary conditions 
\cite[Theorem~5.6]{ChatzigeorgiouMilakis},
together with the interior parabolic Evans--Krylov estimate, there
exist $\beta\in(0,1)$ and $C<\infty$ such that
\begin{equation}\label{eq:parabolic-C2beta}
    \|h\|_
    {C^{2+\beta,\,1+\beta/2}
    (\mathbb S^n_+\times[S-1,S])}
    \leq C,
\end{equation}
where $C$ is independent of $S$.

The coefficients of the equation are smooth functions of
$\xi$, $\rho$, $h$, $\overline{\nabla}h$, and $B$. The
normalizing factor $\zeta(s)$ is independent of the spatial
variable. Estimate \eqref{eq:parabolic-C2beta} and
the defining integral for $\zeta$ give the required H\"older
continuity of all coefficients. We may therefore apply the
boundary parabolic Schauder estimates successively to the
spatially differentiated equations. The corresponding time
derivatives are then controlled by the equation. Along the
equator, the boundary compatibility relations follow by
differentiating $\overline{\nabla}_\eta h=0$ and using that the equator is
totally geodesic. Consequently, for every integer $m\geq0$,
\begin{equation*}
    \|h\|_
    {C^{m+2+\beta,\,(m+2+\beta)/2}
    (\mathbb S^n_+\times[S-\frac12,S])}    \leq C_m,
\end{equation*}
where $C_m$ is independent of $S$.  Since $S$ is arbitrary, this proves
\eqref{eq:Ck-alpha} for every $k\geq0$.

It remains to prove \eqref{eq:coefficient-Ck-limit}. Recall that $Y=\overline{\nabla}h+h\xi$ 
and
\begin{equation*}
    \Lambda
    =
    1+2\rho\langle Y,E\rangle+\rho^2|Y|^2,
    \qquad
    q
    =
    -\frac{2(\xi_{n+1}+\rho h)}{\Lambda}.
\end{equation*}
The estimates already proved imply, for every $k\geq0$,
\begin{equation}\label{eq:Lambda-q-Ck}
    \|\Lambda-1\|_{C^k}
    \leq C_k\rho(s),
    \qquad
    \|q\|_{C^k}\leq C_k,
    \qquad
    \|B\|_{C^k}\leq C_k.
\end{equation}
It follows that $J=\det(\Id+\rho qB)$ 
satisfies
\begin{equation*}
    \|J-1\|_{C^k}\leq C_k\rho(s).
\end{equation*}
Since $\Theta=\Lambda^{n\alpha+1}J^\alpha$,  
we conclude from \eqref{eq:Lambda-q-Ck} that
\begin{equation}\label{eq:Theta-Ck}
    \|\Theta-1\|_{C^k}\leq C_k\rho(s).
\end{equation}

Finally, define
\begin{equation*}
    \zeta_0(s)
    =
    \frac{1}{|\mathbb S^n_+|}
    \int_{\mathbb S^n_+}K^{\alpha-1}\dd\sigma.
\end{equation*}
Uniform convexity gives $ c\leq K\leq C$ 
and hence
\begin{equation*}
    c\leq\zeta_0(s)\leq C.
\end{equation*}
Together with \eqref{eq:Theta-Ck}, this gives
\begin{align*}
    |\zeta(s)-\zeta_0(s)|
    &=
    \left|
    \frac{1}{|\mathbb S^n_+|}
    \int_{\mathbb S^n_+}
    (\Theta-1)K^{\alpha-1}\dd\sigma
    \right|~\leq ~C\rho(s).
\end{align*}
Using also the two-sided bound for $\zeta(s)$ from Lemma
\ref{lem:Lambda-bounds-alpha}, we obtain
\begin{align*}
    \left\|
        \frac{\Theta}{\zeta(s)}
        -\frac{1}{\zeta_0(s)}
    \right\|_{C^k}
    &\leq
    \frac{\|\Theta-1\|_{C^k}}{\zeta(s)}
    +
    \left|
        \frac{1}{\zeta(s)}
        -\frac{1}{\zeta_0(s)}
    \right|~\leq ~C_k\rho(s).
\end{align*}
This proves \eqref{eq:coefficient-Ck-limit}.
\end{proof}

\section{Convergence to a hemisphere}
\label{sec:convergence-alpha}

\begin{theorem}\label{thm:normalized-convergence-alpha}
Let $\alpha > \frac{1}{n+2}$.  The volume-normalized conformal flow converges in
$C^\infty(\Sph^n_+)$ to the constant support function
\begin{equation*}
        h_\infty\equiv1.
\end{equation*}
Equivalently, the normalized hypersurfaces converge smoothly to the unit
hemisphere centered at the origin in the tangent half-space.
\end{theorem}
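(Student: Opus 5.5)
The plan is to combine the uniform $C^k$ bounds of Corollary~\ref{cor:higher-regularity-alpha} with the almost-monotone entropy. First I identify every subsequential limit as a soliton, then classify it by reflection, and finally upgrade subsequential convergence to full convergence.

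\textbf{Step 1: compactness.} By \eqref{eq:Ck-alpha} and Lemma~\ref{thm:C2-alpha}, the family $\{h(\cdot,s)\}$ is precompact in $C^\infty(\Sph^n_+)$. Any limit is uniformly convex, satisfies the Neumann condition, and obeys the normalization \eqref{eq:half-minkowski-normalized}. By \eqref{eq:coefficient-Ck-limit}, the normalized flow differs from the unperturbed half-space flow $\partial_s h=h-K^\alpha/\zeta_0(s)$ by $O(\rho(s))$ in every $C^k$ norm, and $\rho$ is integrable in $s$.

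\textbf{Step 2: subsequential limits are solitons.} I revisit the derivative formula \eqref{eq:transported-entropy-derivative}. For large $s$ the entropy point $z_e(S)$ is close to $0$ and $u^S$ is uniformly controlled. The H\"older step \eqref{equ-holder} used to prove Lemma~\ref{lem:transported-entropy-alpha} then gives
\[
\frac{d}{ds}\cE_{\alpha,+}(\widehat\Omega_s,e^{s-S}z_e(S))\le -D(s)+C\delta(s),
\]
where $D(s)\ge0$ is the H\"older deficit. The deficit $D$ vanishes exactly when $g=K^\alpha/u$ is constant with respect to $dv$.

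To make this uniform, I would run the argument with $J$ replaced by $1$ up to the $O(\rho)$ errors, which are controlled by \eqref{eq:coefficient-Ck-limit}. Integrating over $[s,s+1]$ and using that the entropy has a finite limit (Proposition~\ref{thm:almost-monotonicity-alpha}) together with $\delta\in L^1$, I get $\int_s^{s+1}D\to0$. With the $C^k$ bounds this forces every limit of the time-translated flows $h(\cdot,s_j+\cdot)$ to satisfy $D\equiv0$. The center is $\lim z_e=0$ by Proposition~\ref{thm:C0-alpha}. Hence $K^\alpha=\lambda h$ on $\Sph^n_+$ with $\overline\nabla_\eta h=0$, and the normalization fixes $\lambda$.

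\textbf{Step 3: classification.} The even extension of $h$ across the equator is $C^{1,1}$ and satisfies $\det(\overline\nabla^2 h+h\sigma)^{-\alpha}=\lambda h$ weakly on $\Sph^n$. Elliptic regularity makes it smooth, and it is uniformly convex. By Brendle--Choi--Daskalopoulos \cite{BCD2017}, for $\alpha>\frac1{n+2}$ the only solution is a round sphere centered at the origin. The volume normalization then gives radius $1$, so the limit is $h_\infty\equiv1$.

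\textbf{Step 4: full convergence.} The limit is unique, so the whole flow converges in every $C^k$ norm: if some sequence stayed away from $1$, Step 1 would extract a limit different from $1$, contradicting Steps 2 and 3.

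The main obstacle is Step 2. One must extract a genuine vanishing deficit from the entropy derivative despite the perturbation $\Theta$, since the H\"older chain in Lemma~\ref{lem:transported-entropy-alpha} discarded the deficit. I would redo that chain keeping the equality defect explicitly, then use the $C^k$ closeness $\Theta\to1$ to show the defect term is stable under passage to the limit.
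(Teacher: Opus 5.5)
Your proof is correct. It shares the paper's overall skeleton: compactness from Corollary~\ref{cor:higher-regularity-alpha}, identification of subsequential limits as solitons, even reflection plus Brendle--Choi--Daskalopoulos, and uniqueness of the limit. The genuine difference is in your Step~2.

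\textbf{The paper's route.} It first passes to a subsequential limit of the time-translated flows $h(\cdot,s_j+\tau)$. By \eqref{eq:coefficient-Ck-limit}, this limit solves the exact unperturbed half-space equation. Its entropy is constant in $\tau$, because the corrected entropy converges and the tail $\int_s^\infty\delta$ vanishes. The paper then invokes the exact Andrews--Guan--Ni entropy identity for this limit flow to force equality in H\"older, obtains $K_\infty^\alpha=\lambda(\tau)h_\infty$, and proves $\partial_\tau h_\infty=0$ before reflecting.

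\textbf{Your route.} You keep the H\"older deficit inside the perturbed inequality. Let $R\ge1$ be the ratio in \eqref{equ-holder}, with $J^{\alpha+1}$ in the numerator. The chain in the proof of Lemma~\ref{lem:transported-entropy-alpha} then gives
\[
\frac{d}{ds}\cE_{\alpha,+}\big(\widehat\Omega_s,e^{s-S}z_e(S)\big)\le-(R-1)(1-C\delta)+C\delta .
\]
This isolates a nonnegative deficit without needing any upper bound on $R$. Integrating over a unit window and comparing with the finite entropy limit yields $\int_s^{s+1}(R-1)\,d\tau\to0$. This passes to the limit because $J\to1$ in $C^k$ and the transported centres $e^{\tau-S}z_e(S)$ tend to $0$. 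Sliding the window gives a vanishing deficit on all of $[-L,L]$.

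\textbf{What each buys.} Your argument is more self-contained. It needs neither continuity of the sup-entropy (and of its maximizing point) under smooth convergence, nor a separate appeal to the closed-case identity for the limit flow. It also yields a quantitative decay of the deficit on unit strips. Moreover, it makes the paper's stationarity step unnecessary, since you classify each time slice directly via BCD and the volume normalization. The paper's route is shorter, because once the perturbation $\Theta$ has disappeared in the limit it simply reuses the known exact identity.

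A minor point: your even extension is in fact $C^2$, not merely $C^{1,1}$. This follows from $\overline\nabla_\eta h=0$, since the mixed second derivatives vanish on the equator. So the reflected equation holds classically and the regularity bootstrap is routine.
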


\begin{proof}
Let $s_j\to\infty$ and fix $L>0$.  By Corollary
\ref{cor:higher-regularity-alpha}, after passing to a subsequence the shifted
functions
\begin{equation*}
        h_j(\xi,\tau)=h(\xi,s_j+\tau),
        \qquad -L\le\tau\le L,
\end{equation*}
converge smoothly to a positive uniformly convex limit $h_\infty(\xi,\tau)$.
By \eqref{eq:coefficient-Ck-limit}, the limit satisfies the exact normalized
half-space equation
\begin{equation}\label{eq:limit-normalized-flow}
        \partial_\tau h_\infty
        =h_\infty-
        \frac{K_\infty^\alpha}
        {\dfrac{1}{|\mathbb{S}_+^n|} \displaystyle\int_{\mathbb{S}_+^n}K_\infty^{\alpha-1}\dd \sigma},
        \qquad \overline{\nabla}_\eta h_\infty=0\quad \text{on}~\partial\mathbb{S}^n_+.
\end{equation}

The corrected entropy has a limit, and the error tail tends to zero. Hence the entropy of $h_\infty(\tau)$ is independent of $\tau$.  The exact entropy identity of Andrews--Guan--Ni \cite{AGN2016}, applied to the limit equation (or, equivalently, the moment calculation with $J\equiv1$), shows
that a normalized solution with constant entropy has equality in H\"older.
Consequently
\begin{equation}\label{eq:soliton-equation-half}
        K_\infty^\alpha=\lambda(\tau) h_\infty
\end{equation}
for some function $\lambda(\tau)>0$, which is independent of the spatial variable.  Theorem \ref{thm:C0-alpha} and \eqref{eq:entropy-point-to-zero} ensure that the reference point in \eqref{eq:soliton-equation-half} is the origin.  Integrating \eqref{eq:soliton-equation-half} against $K_\infty^{-1}\dd\sigma$ shows
\begin{equation*}
    \lambda(\tau)=\dfrac{1}{|\mathbb{S}_+^n|} \int_{\mathbb{S}_+^n}K_\infty^{\alpha-1}\dd \sigma
\end{equation*}
 and
\eqref{eq:limit-normalized-flow} gives $\partial_\tau h_\infty=0$. Thus $h_\infty$ and $K_\infty$ are independent of $\tau$, and
hence $\lambda(\tau)$ is also constant in time. We denote this
constant by $\lambda$.

Reflect the stationary half-hypersurface across $\PiH$.  The Neumann boundary condition and the flat free-boundary compatibility identities give a smooth, closed, uniformly convex hypersurface satisfying the same soliton equation $K^\alpha=\lambda h$. The classification theorem of Brendle--Choi--Daskalopoulos \cite{BCD2017} implies that this doubled soliton is a round sphere. Reflection symmetry places its center in $\PiH$, while the entropy point is the origin. Hence the center is the origin. The doubled volume is $|\B^{n+1}|$, so its radius is one. Therefore every subsequential limit is $h\equiv1$.

Since all subsequences have the same limit and the family is precompact in every $C^k$ norm, the full flow converges smoothly to $1$.
\end{proof}

\begin{proof}[Proof of Theorem \ref{thmA}]
Proposition \ref{prop:finite-extinction-alpha} proves finite-time contraction
to a point $p\in\Sph^n$ for every $\alpha>0$.  After rotating $p$ to $-E$, the
Cayley map gives the normalized half-space flow of Section
\ref{sec:normalization-alpha}.  For $\alpha > \frac{1}{n+2}$, Theorem
\ref{thm:normalized-convergence-alpha} proves smooth convergence to the unit
hemisphere.  This is the second assertion of Theorem \ref{thmA}.
\end{proof}

\section*{Acknowledgements}
This work was supported by the National Key Research and Development Program of China (2021YFA1001800), National Natural Science Foundation of China (No.~12531002) and the Fundamental Research Funds for the Central Universities.  The third author was also supported by the China Postdoctoral Science Foundation (Grant No.~2025M783146).

\end{document}